\theoremstyle{definition}
\newtheorem{theorem}{Theorem}[section]
\newtheorem{definition}[theorem]{Definition}
\newtheorem{lemma}[theorem]{Lemma}
\newtheorem{proposition}[theorem]{Proposition}
\newtheorem{corollary}[theorem]{Corollary}
\newtheorem{notation}[theorem]{\rm Notation}
\DeclareMathOperator{\Hom}{Hom}
\DeclareMathOperator{\rot}{rot}
\DeclareMathOperator{\PSL}{PSL}
\DeclareMathOperator{\Homeo}{Homeo}
\DeclareMathOperator{\Isom}{Isom}
\newcommand{\RR}{\mathbb{R}}
\newcommand{\ZZ}{\mathbb{Z}}
\newcommand{\HH}{\mathbb{H}}
\newcommand{\GG}{\Gamma}
\renewcommand{\gg}{\gamma}
\newcommand{\trho}{\widetilde{\rho}}
\newcommand{\tphi}{\widetilde{\phi}}
\newcommand{\Int}{\mathrm{Int}}
\newcommand{\hG}{\widehat{G}}
\newcommand{\trot}{\widetilde{\rot}}
\newcommand{\trhod}{\widetilde{\rho^{(d)}}}
\newcommand{\hphi}{\widehat{\phi}}
\newcommand{\hGG}{\widehat{\GG}}
\newcommand{\hgg}{\widehat{\gg}}
\newcommand{\tHomeo}{\mathrm{H}\widetilde{\mathrm{omeo}}}
\newcommand{\HHH}{\mathrm{H}}
\newcommand{\maru}{\color{blue}}
\title{Isolated circular orders on free products of cyclic groups}
\author{Chihaya Jibiki}
\address{Department of Mathematics, School of Science, Institute of Science Tokyo, 2-12-1 Ookayama, Meguro-ku, Tokyo 152-8550 Japan}
\email{chihaya.j@gmail.com}
\author{Shuhei Maruyama}
\address{School of Mathematics and Physics, College of Science and Engineering, Kanazawa University, Kakuma-machi, Kanazawa, Ishikawa, 920-1192, Japan}
\email{smaruyama@se.kanazawa-u.ac.jp}
\subjclass[2020]{Primary 06F15; Secondary 20F60, 37E10, 22F50}
\keywords{Orderable groups, actions on the circle, spaces of orders.}
\begin{document}

\begin{abstract}
  In this paper, we construct countably many isolated circular orders on the free products $G = F_{2n} \ast \mathbb{Z}_{m_1} \ast \cdots \ast \mathbb{Z}_{m_k}$ of cyclic groups.
  Moreover, we prove that these isolated circular orders are not the automorphic images of the others.
  By using these isolated circular orders, we also construct countably many isolated left orders on a certain central $\mathbb{Z}$-extension of $G$, which are not the automorphic images of the others.
\end{abstract}

\maketitle

\section{Introduction} \label{sec:intro}
For a group $\GG$, a (left invariant) circular order on $\GG$ is a cocycle $c \colon \GG^3 \to \{\pm 1, 0\}$ with certain conditions (see Section \ref{sec:prel}).
Let $CO(\GG)$ be the space of circular orders on $\GG$.
This space $CO(\GG)$ is a compact, metrizable and totally disconnected space if $\GG$ is countable.
Hence its isolated points, which are called \emph{isolated circular orders}, have been studied (\cite{MR3813208}, \cite{MR3887426}, \cite{MR4033501}, \cite{MR4055461}, \cite{MR4584769}).

Though circular orders are combinatorial objects, they can be studied in dynamical ways.
In fact, if $\GG$ is countable, there is a procedure to construct a faithful action
of $\GG$ on the circle from a circular order, which is called the \emph{dynamical realization}.
More strongly, it is known that a countable group $\GG$ admits a circular order if and only if $\GG$ acts on the circle faithfully.

The dynamical realization gives rise to a map
\[
  R \colon CO(\GG) \to \Hom(\GG, \Homeo_+(S^1))/ \sim,
\]
where $\sim$ denotes the equivalence relation by conjugacy.
Mann and Rivas proved in \cite{MR3887426} that this map $R$ is continuous.
Employing this and ping-pong arguments, they proved that the free groups of even rank admit infinitely many isolated circular orders which are not conjugate to the others.
Contrastingly, Malicet, Mann, Rivas and Triestino proved in \cite{MR4033501} that the
free groups of odd rank do not admit isolated circular orders.
In \cite{MR4055461}, Matsumoto also used the continuity of $R$ and ping-pong arguments to prove that
$\PSL(2, \ZZ) \cong \ZZ_2 \ast \ZZ_3$ admits infinitely many isolated circular orders which are not the automorphic images of the others.
Along this line, we prove the following.

\begin{theorem}\label{thm:main}
  Let $n \in \ZZ_{\geq 0}, k \in \ZZ_{\geq 1}$ and $m_1, \ldots, m_k \in \ZZ_{\geq 2}$ such that $(n,k,m_1, \cdots, m_k) \neq (0,1,m_1), (0,2,2,2)$.
  Then the free product $G = F_{2n} \ast \ZZ_{m_1} \ast \cdots \ast \ZZ_{m_k}$ admits countably many isolated circular orders which are not the automorphic images of the others.
\end{theorem}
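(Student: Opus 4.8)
Following Mann--Rivas \cite{MR3887426} and Matsumoto \cite{MR4055461}, the plan is to exhibit the desired circular orders as the orders induced on the orbit of a basepoint by explicit ping-pong actions of $G$ on $S^1$, to deduce isolation from the Mann--Rivas continuity of the realisation map $R$ \cite{MR3887426} together with the local constancy of ping-pong combinatorics, and then to distinguish the resulting orders by conjugacy invariants of the associated actions which are preserved by $\Aut(G)$. The torsion factors and the requirement that the free part have \emph{even} rank are what make the construction possible: the even rank is forced because Malicet--Mann--Rivas--Triestino \cite{MR4033501} showed the odd-rank free groups admit no isolated circular orders, so the free part must act by a non-deformable ping-pong pattern available only in even rank; and the excluded tuples $(0,1,m_1)$ and $(0,2,2,2)$ are precisely those for which $G$ is finite cyclic, respectively infinite dihedral, and the construction degenerates (there are at least two free-product factors, not both of order $2$, in exactly the non-excluded cases).

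\textbf{Step 1 (actions and orders).} View $G$ as the free product of the $2n$ infinite-cyclic factors $\langle x_j\rangle$ and the $k$ torsion factors $\ZZ_{m_i}$. For each value of a combinatorial parameter $q\in\ZZ_{\ge 1}$ I would build a homomorphism $\rho_q\colon G\to\Homeo_+(S^1)$ from a ping-pong configuration: disjoint Schottky arcs for the generators of the free part (paired up so as to realise the rigid behaviour permitted by the even-rank hypothesis), and, for each $g_i$, a finite-order homeomorphism conjugate to a rotation that cyclically permutes a necklace of arcs placed in ping-pong position relative to the other players, with $q$ prescribing the fine cyclic placement and nesting of all these arcs so that distinct values of $q$ yield combinatorially distinct playgrounds. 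The ping-pong lemma for free products then gives that $\rho_q$ is faithful and that $G$ acts in a fully combinatorially controlled way on the playground complement. Fixing a basepoint $x_0$ off all the arcs, the cyclic order of the orbit $\rho_q(G)x_0$ is a circular order $c_q$ on $G$, and a standard argument identifies $\rho_q$ with the minimal model of the dynamical realisation of $c_q$, so that $R(c_q)$ is the conjugacy class of $\rho_q$.

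\textbf{Step 2 (isolation).} Suppose $c'\in CO(G)$ is sufficiently close to $c_q$. By continuity of $R$, the dynamical realisation of $c'$ is $C^0$-close, up to conjugacy, to $\rho_q$. The defining ping-pong inequalities are open conditions in $\Hom(G,\Homeo_+(S^1))$, and a finite-order homeomorphism of $S^1$ stays conjugate to the same rotation under small perturbations; hence the realisation of $c'$ is again a ping-pong action of the \emph{same} combinatorial type, and since each value $c_q(g,h,\ell)$ is determined by boundedly many comparisons within that combinatorial data, $c'$ induces the same cyclic order on $G\cdot x_0$, i.e.\ $c'=c_q$. Thus $c_q$ is isolated. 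Isolating a clean ``rigidity $\Rightarrow$ isolation'' criterion that genuinely accommodates the finite-order generators (which are not north--south maps) is the first main technical point.

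\textbf{Step 3 (inequivalence under automorphisms).} The $c_q$ are pairwise non-conjugate because $q$ is recorded by a conjugacy invariant of $\rho_q$; a convenient choice is the set of translation numbers of a fixed family of elements of the central $\ZZ$-extension $\widetilde G$ for the lifted left order, which takes a discrete set of values and depends only on the conjugacy class of the action. To pass from non-conjugacy to non-equivalence under $\Aut(G)$ I would invoke the classical description of the automorphism group of a free product of cyclic groups (Fouxe--Rabinovitch: generated by $\Aut(F_{2n})$, automorphisms of the individual $\ZZ_{m_i}$, permutations of isomorphic torsion factors, partial conjugations, and transvections of a free generator by a factor element). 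Every such automorphism carries a torsion element to a conjugate of a torsion element of the same order, so it permutes the relevant invariants only within a finite group, whereas the translation-number invariant above is untouched by conjugation and by those finitely many substitutions; choosing the $q$-family so that this invariant takes infinitely many distinct values, no $c_q$ is the automorphic image of another. Carrying out this bookkeeping — in particular taming the transvections that fold the free part into the torsion factors, as in Matsumoto's treatment of $\ZZ_2\ast\ZZ_3$ \cite{MR4055461}, and checking that the two excluded tuples really fall outside the construction — is the second, and I expect the more delicate, obstacle.
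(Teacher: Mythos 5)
Your outline matches the paper's general strategy (ping-pong actions, continuity of $R$, invariants preserved under $\Aut(G)$), but the two steps you defer as ``technical points'' are exactly where the substance lies, and in both places the idea you would need is missing. First, isolation: the torsion generators have no attracting domains, so the Mann--Rivas rigidity lemma (``the cyclic order of the orbit is determined by finitely many containment relations'') does not apply to a generating set of $G$, and the free-product ping-pong lemma only gives faithfulness, not local constancy of the orbit's cyclic order. The paper's solution is to exhibit an explicit finite-index free subgroup $F\le G$ of index $m_1\cdots m_k$, freely generated by conjugated commutators of the elliptic generators together with conjugates of the hyperbolic ones, verify that \emph{these} elements have ping-pong dynamics, and prove a generalization of \cite[Lemma 4.2]{MR3887426} in which the cyclic order of the full $G$-orbit is determined by the ping-pong data of $F$ plus the positions of the finitely many coset-representative points $e_k^{i_k}\cdots e_1^{i_1}(x_0)$. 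Nothing in your Step 2 supplies this; the remark that a finite-order homeomorphism stays conjugate to the same rotation is true (within $\Hom(G,\Homeo_+(S^1))$, since rotation number is continuous and $\sigma(e_i)^{m_i}=1$) but does not determine the cyclic order of the orbit. Second, the infinite family: your ``combinatorial parameter $q$ prescribing the placement of arcs'' is never instantiated, and for these groups the natural mechanism is not rearrangement of arcs but lifting the single Fuchsian action to $d$-fold covers of $S^1$. This requires a genuine number-theoretic argument: the lift exists only when $d\mid m_li_l+1$ for each $l$, and one needs the stabilizer $\alpha=e_1\cdots e_k[h_1,h_2]\cdots[h_{2n-1},h_{2n}]$ of the basepoint gap to act transitively on the $d$ preimage components, which via Dirichlet's theorem reduces to an inequality $Np_0\neq p_1$ --- and it is precisely here, not in a generic ``degeneration,'' that the exclusion of $\ZZ_2\ast\ZZ_2$ is used.

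For Step 3 your proposed invariant (translation numbers of lifts, controlled via Fouxe--Rabinovitch generators of $\Aut(G)$) is plausible but unsubstantiated as written: an automorphism can send $\alpha$ to an essentially arbitrary primitive element, and you give no argument that the translation-number data survives transvections and partial conjugations. The paper avoids describing $\Aut(G)$ altogether: the linear part (maximal convex subgroup) of $c^{(d)}$ is the cyclic group $\langle\alpha^d\rangle$, the linear part of $c_\phi$ is $\phi^{-1}$ of that of $c$, so an equivalence $c^{(d)}=(c^{(d')})_\phi$ forces $\phi(\alpha)^d=\alpha^{\pm d'}$; primitivity of $\alpha$ and uniqueness of roots in a discrete subgroup of $\Isom_+(\HH^2)$ then give $d=d'$. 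I would recommend adopting that route --- it is both shorter and closes the gap you flag as the delicate obstacle.
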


The groups excluded in Theorem \ref{thm:main} are finite cyclic groups and the free product $\ZZ_2 \ast \ZZ_2$.
The finite cyclic groups admit only finitely many circular orders.
The group $\mathbb{Z}_2 \ast \mathbb{Z}_2$ admits just four circular orders \cite{MR3784820}.
In particular, these circular orders are all isolated.

A left order on a group is a total order which is invariant under the left-multiplication.
As is the case of $CO(\GG)$, the space $LO(\GG)$ of left orders on $\GG$ is compact, metrizable and totally disconnected if $\GG$ is countable.
Isolated left orders on several groups has been studied.
For example, the following groups admit no isolated left orders: free abelian groups of rank greater than one \cite{MR2069015}, free groups of rank greater than one \cite{MR2766228}, free products of two left-orderable groups \cite{MR2859890}, virtually solvable groups \cite{MR3460331}, and surface groups of genus greater than one and certain amalgamated products of free groups \cite{MR3664524}.
Isolated left orders on the following group have been constructed: the braid groups $B_n$ \cite{MR1859702, MR2463428}, the fundamental groups of the complement of torus knots \cite{MR2745552, MR2998793}, a certain family of groups including torus knot groups and some of their amalgamated products \cite{MR3200370}, amalgamated free products over $\ZZ$ of groups with isolated left orders \cite{MR3476136}, and direct products $F_{2n} \times \ZZ$ \cite{MR3887426}.

Let $\hGG$ be a central $\ZZ$-extension of $\GG$.
Mann and Rivas \cite{MR3887426} provided a criterion for a certain left order on $\hGG$ to be isolated in terms of the circular orders on $\GG$.
They used this to prove that the direct product $F_{2n} \times \ZZ$ admits countably many isolated left orders which are not conjugate to the others.
Similarly, Matsumoto \cite{MR4055461} proved that the braid group $B_3$ of $3$-strands, which is a central extension of $\PSL(2,\ZZ)$, admits countably many isolated left orders which are not the automorphic images of the others.

Applying the criterion by Mann--Rivas to Theorem \ref{thm:main}, we prove the following.

\begin{theorem}\label{thm:main_left}
  Let $n,k,m_1, \ldots, m_k$ be as in Theorem \ref{thm:main}.
  Let $\hG$ be the group given as
  \[
    \hG = \langle e_1, \ldots, e_k, h_1, \ldots, h_{2n}, z \mid e_1^{m_1} = \cdots = e_k^{m_k} = z, [h_1, z] = \cdots = [h_{2n}, z] = 1 \rangle.
  \]
  Then $\hG$ admits countably many isolated left orders which are not the automorphic images of the others.
\end{theorem}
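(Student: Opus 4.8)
The plan is to deduce Theorem~\ref{thm:main_left} from Theorem~\ref{thm:main} via the criterion of Mann--Rivas that converts an isolated circular order on a group into an isolated left order on a suitable central $\ZZ$-extension, following Matsumoto's treatment of $B_3$ over $\PSL(2,\ZZ)$ in \cite{MR4055461}. First I would verify that $\hG$ is a central $\ZZ$-extension of $G$: setting $z=1$ in the presentation of $\hG$ gives back the presentation of $G$, so there is a surjection $\pi\colon\hG\to G$; since $z$ commutes with every generator, $\langle z\rangle$ is central and equals $\Ker\pi$; and a short computation with the abelianization of $\hG$ (the $h_j$ contribute a free $\ZZ^{2n}$, while the relations $m_ie_i=z$ cut out an infinite cyclic subgroup containing $z$) shows $z$ has infinite order. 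Thus $1\to\ZZ\to\hG\xrightarrow{\pi}G\to1$ is a central extension, and since $G$ is a free product of at least two nontrivial groups it is centerless, so $Z(\hG)=\langle z\rangle$; in particular $\langle z\rangle$ is characteristic in $\hG$, which will be used at the end.

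The key step --- and the one I expect to be the main obstacle --- is to check that the isolated circular orders $c$ on $G$ produced in the proof of Theorem~\ref{thm:main} satisfy the hypothesis of the Mann--Rivas criterion relative to \emph{this} extension, i.e.\ that $\hG$ is isomorphic to the pullback of the universal central extension $\tHomeo_+(S^1)\to\Homeo_+(S^1)$ along the dynamical realization $\rho_c$. Since $\rho_c$ is faithful, $\rho_c(e_i)$ has order exactly $m_i$ and so is topologically conjugate to a rotation $R_{j_i/m_i}$ with $\gcd(j_i,m_i)=1$. Replacing $c$ by its composition with the automorphism of $G$ that sends $e_i\mapsto e_i^{a_i}$ for $a_ij_i\equiv1\pmod{m_i}$ and fixes the $h_j$ --- an operation that preserves both ``isolated'' and the $\Aut(G)$-orbit --- I may assume $\rho_c(e_i)$ is conjugate to $R_{1/m_i}$ for every $i$. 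Then, choosing the lift $\widetilde{\rho_c(e_i)}$ with rotation number $1/m_i$ and any lifts of the $\rho_c(h_j)$, the pullback extension is generated by these together with the central translation $T$, subject to no relations beyond $\widetilde{\rho_c(e_i)}^{\,m_i}=T$ and $[\widetilde{\rho_c(h_j)},T]=1$ (the remaining relations being consequences), so it is isomorphic to $\hG$ with $z\leftrightarrow T$. Controlling the rotation numbers of the $e_i$ is the delicate point; the rest of this identification is formal and in fact re-proves that $z$ has infinite order.

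Granting this, the Mann--Rivas criterion yields, for each such $c$, a left order $<_c$ on $\hG$ with $z>_c1$ and $\langle z\rangle$ cofinal, and $<_c$ is isolated in $LO(\hG)$ precisely because $c$ is isolated in $CO(G)$; feeding in the countably many circular orders of Theorem~\ref{thm:main} (normalized as above) gives countably many isolated left orders on $\hG$. To see that no two of them, $<_c$ and $<_{c'}$, are automorphic images of each other, suppose $\Phi\in\Aut(\hG)$ carries $<_c$ to $<_{c'}$. Since $\langle z\rangle=Z(\hG)$ is characteristic, $\Phi$ descends to some $\phi\in\Aut(G)$; and as $z>_c1$ while $\Phi(z)\in\{z^{\pm1}\}$, positivity forces $\Phi(z)=z$, whence $\Phi$ maps the interval $\{x:1\le x<z\}$ onto itself order-isomorphically. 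Therefore the circular order on $G$ recoverable from $<_c$ together with the positive cofinal element $z$ --- which, with these conventions, is $c$ --- is carried by $\phi$ to the one recoverable from $<_{c'}$, namely $c'$; hence $c'=c\circ(\phi\times\phi\times\phi)$. Since the circular orders of Theorem~\ref{thm:main} lie in pairwise distinct $\Aut(G)$-orbits, a property unaffected by the normalization, we conclude $c=c'$, so $<_c$ and $<_{c'}$ coincide.
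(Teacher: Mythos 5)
Your proposal is correct and follows the same overall architecture as the paper: establish that $\langle z\rangle$ is the center and $0\to\ZZ\to\hG\to G\to 1$ is a central extension, identify $\hG$ with the pullback of $\tHomeo_+(S^1)\to\Homeo_+(S^1)$ along the dynamical realization, apply the Mann--Rivas results (Theorems \ref{thm:circ_order_lift_Z_ext} and \ref{thm:criterion_isol_left_order}) to get isolated left orders, and then use the descent of automorphisms through the characteristic center together with the compatibility $\pi_{\hGG}^*\hphi^*=\phi^*\pi_{\hGG}^*$ (the paper's Lemma \ref{lem:induced_order_compatible}) to reduce distinctness to Theorem \ref{thm:main}. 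The one place you genuinely diverge is the step you correctly identified as delicate: arranging that each $\rho(e_i)$ has rotation number $1/m_i$ so that the lift with translation number $1/m_i$ satisfies $\widetilde{\rho(e_i)}^{\,m_i}=T$. The paper gets this for free from its explicit construction --- in Lemma \ref{lem:cent_ext_determine} one computes $\rot(\rho^{(d)}(e_l))=(m_li_l+1)/(dm_l)=1/m_l$ directly from the chosen lifts to the $d$-fold cover --- whereas you, not having the construction in hand, normalize each circular order within its $\Aut(G)$-orbit by the automorphism $e_i\mapsto e_i^{a_i}$ with $a_ij_i\equiv 1\pmod{m_i}$. That workaround is valid (it preserves isolatedness and $\Aut(G)$-orbits, which is all the final argument uses) and is in fact slightly more robust, since it shows that \emph{any} isolated circular order in such a family has an automorphic representative lifting to this particular $\hG$; the paper's route is more economical because the specific orders $c^{(d)}$ already satisfy the rotation-number condition on the nose.
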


\section{Preliminaries} \label{sec:prel}
\subsection{Circular orders}

In this section, we review some facts about circular orders. 
We refer the reader to \cite{MR2172491} and \cite{MR3813208} for further details.

\begin{definition}
  Let $\GG$ be a group.
  A map $c \colon \GG^3 \to \{0, \pm 1\}$ is called a \emph{circular order on $\GG$} if
  \begin{enumerate}
    \item $c(g_1 , g_2 , g_3) = 0$ if $g_i = g_j$ for some $i\neq j$,
    \item $c(g_2, g_3, g_4)-c(g_1, g_3, g_4)+c(g_1, g_2, g_4)-c(g_1, g_2, g_3) = 0$ for every $g_1, g_2, g_3, g_4 \in \GG$, and
    \item $c(gg_1, gg_2, gg_3) = c(g_1, g_2, g_3)$ for every $g, g_1, g_2, g_3 \in \GG$.
  \end{enumerate}
\end{definition}
For an automorphism $\phi \colon \GG \to \GG$, we define $c_{\phi} \colon \GG^3 \to \{ 0, \pm 1 \}$ by
\[
  c_{\phi}(g_1, g_2, g_3) = c(\phi(g_1), \phi(g_2), \phi(g_3))
\]
for $g_1, g_2, g_3 \in \GG$.
Then $c_{\phi}$ is a circular order on $\GG$, called an \emph{automorphic image} of $c$.

Let $CO(\GG)$ be the set of circular orders on $\GG$.
The product topology of $\{0, \pm 1\}^\GG$ induces the topology of $CO(\GG)$.
Then the space $CO(\GG)$ is known to be compact and totally disconnected, and it is also metrizable if $\GG$ is countable \cite{MR2069015}.

\begin{definition}
  A circular order $c$ is said to be \emph{isolated} if $c$ is an isolated point in $CO(\GG)$.
\end{definition}

For a circular order $c$ and a finite set $S$ of $\GG$, we set $U_{c,S} = \{ c' \in CO(\GG) \colon c'|_{S^3} = c|_{S^3} \}$.
Then the neighborhood basis of $c$ is given by $\{ U_{c,S} \colon S \text{ is a finite subset of } \GG \}$.
Hence a circular order $c$ is isolated if and only if there exists a finite subset $S$ of $\GG$ such that $U_{c,S} = \{ c \}$.

Every circular order on a countable group $\GG$ is realized as an orbit of some action of $\GG$ on the circle.
Indeed, let $c$ be a circular order on $\GG$ and $x_0$ a point in $S^1$.
Here we regard $S^1$ as the quotient $\mathbb{R}/\mathbb{Z}$.
We fix an enumeration $\{ g_i \}_{i \geq 0}$ with $g_0 = 1$ and define an embedding $\iota \colon \GG \to S^1$ as follows.
First we set $\iota(g_0) = x_0$ and $\iota(g_1) = x_0 + 1/2$.
If $\iota(g_i)$ is defined for all $i \leq n$, we define $\iota(g_{n+1})$ as the midpoint of the unique connected component of $S^1 \setminus \{ \iota(g_0), \cdots, \iota(g_n) \}$ such that
\begin{align*}
    c(g_i, g_j, g_k) = \mathrm{ord}(\iota(g_i), \iota(g_j), \iota(g_k)).
\end{align*}
Here the map $\mathrm{ord} \colon (S^1)^3 \to \{ 0, \pm 1 \}$ is defined by
\begin{align*}
    \mathrm{ord}(x,y,z) =
    \begin{cases}
        1 & \text{ if $x,y,z$ are distinct and arranged counterclockwise}, \\
        -1 & \text{ if $x,y,z$ are distinct and arranged clockwise}, \\
        0 & \text{ otherwise.}
    \end{cases}
\end{align*}
Then the self left-action of $\GG$ induces a continuous order-preserving action on $\iota(\GG) \subset S^1$.
This action extends to a continuous action on the closure of $\iota(\GG)$, and also extends to a continuous action on $S^1$ by setting the action on the complements of the closure of $\iota(\GG)$ to be affine.
Now we obtain an action $\rho_{c} \colon \GG \to \mathrm{Homeo}_+(S^1)$, which is called the \emph{dynamical realization of $c$ based at $x_0$}.
By construction, the orbit $\rho_c(\GG)\cdot x_0$ recovers the circular order $c$.

It is known that the conjugacy class of the dynamical realization $\rho_{c}$ does not depend on the choices of the enumeration $\{ g_i \}_{i\geq 0}$ or the basepoint $x_0$.
Hence the dynamical realization gives rise to a map
\begin{align*}
    R \colon CO(\GG) \to \mathrm{Hom}(\GG, \mathrm{Homeo}_+(S^1))/\sim,
\end{align*}
where $\mathrm{Hom}(\GG, \mathrm{Homeo}_+(S^1))/\sim$ denotes the quotient of $\mathrm{Hom}(\GG, \mathrm{Homeo}_+(S^1))$ by the equivalence relation of conjugacy.
This map $R$ is called the \emph{realization map}.
Note that the set $\mathrm{Hom}(\GG, \mathrm{Homeo}_+(S^1))/\sim$ is equipped with the quotient topology of the pointwise convergence topology on $\mathrm{Hom}(\GG, \mathrm{Homeo}_+(S^1))$.

We will use the following properties of the realization map $R$.

\begin{theorem}[{\cite[Proposition 3.3]{MR3887426}}]\label{thm:continuous}
    The realization map $R$ is continuous.
\end{theorem}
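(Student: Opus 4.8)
The plan is to deduce the statement directly from the construction of the dynamical realization. Since $CO(\GG)$ is metrizable, it suffices to prove sequential continuity: if $c_k \to c$ in $CO(\GG)$, then $R(c_k) \to R(c)$. As the quotient map $\Hom(\GG,\Homeo_+(S^1)) \to \Hom(\GG,\Homeo_+(S^1))/\sim$ is continuous, it is enough to produce, for each $k$, a representative $\rho_k$ of the conjugacy class $R(c_k)$ and a representative $\rho$ of $R(c)$ such that $\rho_k(g) \to \rho(g)$ in $\Homeo_+(S^1)$ for every $g \in \GG$. First I would fix a single enumeration $\{g_i\}_{i\ge 0}$ of $\GG$ with $g_0 = 1$ and a single basepoint $x_0$, and use these same data to form both $\rho := \rho_c$ and $\rho_k := \rho_{c_k}$, together with their embeddings $\iota, \iota_k \colon \GG \to S^1$; this is legitimate precisely because the conjugacy class of a dynamical realization does not depend on the enumeration or basepoint.

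The first step is a combinatorial observation: in the inductive midpoint construction, the position of $\iota(g_i)$ is determined solely by the restriction $c|_{\{g_0,\dots,g_i\}^3}$. Hence for every $N$ there is $k_0(N)$ with $\iota_k|_{\{g_0,\dots,g_N\}} = \iota|_{\{g_0,\dots,g_N\}}$ for all $k \ge k_0(N)$; fixing $g$, it follows that $\rho_k(g)$ and $\rho(g)$, as well as their inverses, agree on the finite set $\iota(\{g_0,\dots,g_N\})$ once $k$ is large. In other words, $\rho_k(g)$ and $\rho(g)$ coincide on an increasing family of finite subsets of the orbit $\iota(\GG)$ whose union is all of $\iota(\GG)$. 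When $\iota(\GG)$ happens to be dense in $S^1$, this already closes the argument: two orientation-preserving homeomorphisms agreeing on a sufficiently fine finite subset of $\iota(\GG)$ whose image is also fine are automatically $C^0$-close, so $d_{C^0}(\rho_k(g),\rho(g)) \to 0$.

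The real content is the non-dense case. Write $K := \overline{\iota(\GG)}$; on each complementary interval (``gap'') of $K$ the map $\rho(g)$ is affine and carries gaps to gaps, whereas $\rho_k(g)$ need not, since the $c_k$-orbit may accumulate inside a gap of $K$, making $\rho_k(g)$ arbitrarily steep there and hence far from $\rho(g)$ in $C^0$. To remedy this I would, for each $k$, replace the midpoint realization of $c_k$ by a \emph{compressed} one: an embedding $\tilde\iota_k \colon \GG \to S^1$ still inducing the circular order $c_k$, still agreeing with $\iota$ on $\{g_0,\dots,g_{N_k}\}$ for a suitable $N_k \to \infty$, but with every remaining point $\tilde\iota_k(h)$ placed within $1/k$ of an appropriate point of $\iota(\{g_0,\dots,g_{N_k}\})$ — feasible because outside the small gaps of $K$ there is little room, while inside each large gap of $K$ one may cluster the new points near one endpoint in whatever linear order $c_k$ prescribes. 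The dynamical realization $\tilde\rho_k$ built from $\tilde\iota_k$ is conjugate to $\rho_{c_k}$, hence represents $R(c_k)$; its orbit closure lies within Hausdorff distance $o(1)$ of $K$, it agrees with $\rho(g)$ on $\iota(\{g_0,\dots,g_{N_k}\})$, and on each of the finitely many large gaps of $K$ it is $o(1)$-close to the affine map $\rho(g)$. Combining a fine-net estimate near the ``thick'' part of $K$ with the affine estimate on the large gaps yields $d_{C^0}(\tilde\rho_k(g),\rho(g)) \to 0$ for each fixed $g$, whence $R(c_k) = [\tilde\rho_k] \to [\rho] = R(c)$.

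I expect the main obstacle to be exactly this gap analysis. The dynamical realization records group-theoretic information only on the orbit closure $K$, and on its complementary intervals the homeomorphisms $\rho_{c_k}(g)$ can behave completely unlike the affine $\rho_c(g)$, so some normalization within the conjugacy class $R(c_k)$ is unavoidable. The technical heart is verifying that a compressed embedding $\tilde\iota_k$ with all the required properties — realizing $c_k$, matching $\iota$ on the prescribed initial segment, and squeezing the remainder of the orbit into $1/k$-neighborhoods of it — can indeed be constructed, and that the resulting $\tilde\rho_k$ satisfies the claimed uniform estimates.
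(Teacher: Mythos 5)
The paper does not prove this statement: it is imported verbatim from Mann--Rivas \cite[Proposition 3.3]{MR3887426}, so your proposal can only be measured against that external argument and on its own terms. Your outline does follow the same broad strategy (fix one enumeration and basepoint, match the realizations of $c_k$ and $c$ on a long initial segment, then control what happens off the orbit closure), and you have correctly located the genuine difficulty: on a gap $J$ of $K=\overline{\iota(\GG)}$ whose image $\rho(g)(J)$ has definite length, $\rho_k(g)$ and $\rho(g)$ can remain far apart in $C^0$ even after they agree on $\partial J$, so some renormalization within the conjugacy class $R(c_k)$ is unavoidable.

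The step you use to carry out that renormalization, however, contains a real gap: the assertion that the action $\tilde\rho_k$ built from the compressed embedding $\tilde\iota_k$ is conjugate to $\rho_{c_k}$. Two embeddings of $\GG$ into $S^1$ inducing the same circular order do \emph{not} in general yield conjugate actions after extending affinely on gaps: the order-preserving bijection between the two orbits extends to a homeomorphism of $S^1$ only when the orbit closures are order-isomorphic in a compatible way, which is guaranteed precisely by a tightness condition (every gap of the orbit closure has its boundary in the orbit; cf.\ \cite[Lemma 8.8]{MR4055461} and the corresponding characterization in \cite{MR3887426}). A toy example: embed $\ZZ$ with its standard order so that in one embedding the positive and negative rays accumulate at a single point, and in another they accumulate at two distinct points leaving an empty gap between them; both realize the same order, but the closures differ by a point and no conjugacy exists. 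Your compression --- ``cluster the new points near one endpoint in whatever linear order $c_k$ prescribes'' --- is exactly the kind of operation that can introduce accumulation points outside the orbit and destroy tightness. You must therefore either verify that $\tilde\iota_k$ can be chosen tight (which constrains the clustering) or construct the conjugating homeomorphism to the midpoint realization by hand; you flag this as ``the technical heart'' but do not supply it, and as written the claim $[\tilde\rho_k]=R(c_k)$ is unjustified. A smaller point: the agreement of $\rho_k(g)$ with $\rho(g)$ on $\iota(\{g_0,\dots,g_N\})$ requires that the translates $g g_i$ also lie in the matched initial segment; this is easily arranged for each fixed $g$ by enlarging $N$, but it should be stated.
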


\begin{theorem}[{\cite[Lemma 3.12]{MR3887426}}]\label{thm:dyn_real_cont}
  Let $c \in CO(\GG)$ and let $\rho \colon \GG \to \Homeo_+(S^1)$ be a dynamical realization of $c$ based at $x_0$.
  Let $U$ be any neighborhood of $\rho$ in $\Hom(\GG, \Homeo_+(S^1))$.
  Then, there exists a neighborhood $V$ of $c$ in $CO(\GG)$ such that each order in $V$ has a dynamical realization based at $x_0$ contained in $U$.
\end{theorem}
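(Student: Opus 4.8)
The plan is to unwind the pointwise-convergence topology into an explicit finite amount of data, observe that the dynamical realization depends only on a finite piece of the circular order, and then compare the two realizations arc by arc. A basic neighborhood $U$ of $\rho = \rho_c$ is determined by a finite set $F \subset \GG$ and an $\epsilon > 0$, namely $U = \{\rho' : \sup_{x \in S^1} d(\rho'(\gamma)(x), \rho(\gamma)(x)) < \epsilon \text{ for all } \gamma \in F\}$. Fixing the same enumeration $\{g_i\}_{i \geq 0}$ and basepoint $x_0$ used for $\rho$, I would produce a finite set $S \subset \GG$ so that $V = U_{c,S}$ works: every $c' \in V$ has its dynamical realization $\rho_{c'}$ (for this enumeration and basepoint) inside $U$.

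The first key observation is that the embedding is locally determined: by construction $\iota_{c'}(g_{n+1})$ is the midpoint of a component of $S^1 \setminus \{\iota_{c'}(g_0), \ldots, \iota_{c'}(g_n)\}$ selected using only the values $c'(g_a, g_b, g_{n+1})$ with $a,b \leq n$. Hence, inductively, the position $\iota_{c'}(g_i)$ depends only on $c'|_{\{g_0, \ldots, g_i\}^3}$. Therefore, if $S = \{g_0, \ldots, g_M\}$ and $c' \in U_{c,S}$, then $\iota_{c'}(g_i) = \iota_c(g_i)$ \emph{exactly} for every $i \leq M$. I would choose $M$ large enough that $F \cdot \{g_0, \ldots, g_N\} \subseteq \{g_0, \ldots, g_M\}$ for a suitable $N \leq M$; then the two realizations agree, as maps, on the finite set $P = \{\iota_c(g_j) : j \leq N\}$ and send each $\iota_c(g_j)$ to the common point $\iota_c(\gamma g_j)$ for $\gamma \in F$.

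Next I would record the elementary geometric fact that drives the comparison: if the orientation-preserving homeomorphisms $f = \rho_c(\gamma)$ and $f' = \rho_{c'}(\gamma)$ agree on the finite set $P$, then they agree on the cyclic order of $P$ and on its image $f(P) = f'(P)$, so each complementary arc of $P$ is mapped by both $f$ and $f'$ onto the identical arc between the two corresponding consecutive points of $f(P)$. Consequently $\sup_x d(f(x), f'(x))$ is bounded by the largest length of a complementary arc of $f(P)$, equivalently by the largest length of $f(A)$ over complementary arcs $A$ of $P$. Since $f = \rho_c(\gamma)$ is a fixed, hence uniformly continuous, homeomorphism, every short complementary arc of $P$ has short image; and since $\iota_c(\GG)$ is dense in $\overline{\iota_c(\GG)}$, increasing $N$ forces every complementary arc of $P$ either to become short or to approximate a complementary gap of $\overline{\iota_c(\GG)}$ arbitrarily well.

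The main obstacle is the non-minimal case, where $\overline{\iota_c(\GG)} \neq S^1$ and a complementary arc $A = (a,b)$ of $P$ approximates a gap of $\overline{\iota_c(\GG)}$ whose image $f(A)$ is long. On such a gap $\rho_c(\gamma)$ is affine, with endpoints $f(a), f(b) \in f(P)$ matched exactly by $f'$; when $(a,b)$ is also a complementary gap of $\overline{\iota_{c'}(\GG)}$ the two maps are affine with identical endpoints, and hence agree exactly there. The difficulty is that a later point $\iota_{c'}(g_m)$ with $m$ large may fall into $(a,b)$, so that $(a,b)$ is not a gap of $\overline{\iota_{c'}(\GG)}$ and $\rho_{c'}(\gamma)$ need not be affine on it. A controlled element cannot do this, since for $g_i, g_{i'}$ the endpoints of the gap one has that $c'(g_i, g_m, g_{i'})$ flips sign precisely when $g_m$ crosses into the arc $(a,b)$, so agreement on $S^3$ keeps every element of $S$ out of the gap; only uncontrolled indices $m > M$ remain. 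Controlling $\rho_{c'}(\gamma)$ on the finitely many long gaps whose image under some $\gamma \in F$ is again long—by combining the affine model of $\rho_c$ with the cocycle identities that any valid circular order $c'$ must satisfy, so that the gap-filling points of $c'$ are pinned near the affine positions dictated by $\rho_c$—is the crux of the argument and the step I expect to require the most care.
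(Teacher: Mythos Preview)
The paper does not prove this statement; it is quoted without proof from Mann--Rivas \cite[Lemma 3.12]{MR3887426}, so there is no argument in the present paper to compare against. That said, your outline deserves comment on its own merits.

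Your reduction is correct up through the minimal case: fixing the enumeration forces $\iota_{c'}(g_i)=\iota_c(g_i)$ for $i\le M$, the two realizations agree on the finite set $P$, and on any complementary arc $A$ of $P$ both $\rho_c(\gamma)$ and $\rho_{c'}(\gamma)$ are orientation-preserving homeomorphisms onto the \emph{same} image arc, so their sup-distance on $A$ is at most $|\rho_c(\gamma)(A)|$. When $\overline{\iota_c(\GG)}=S^1$ this already finishes the proof by taking $N$ large.

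The gap is in the non-minimal case, and your proposed fix via ``cocycle identities pinning the gap-filling points near the affine positions'' does not work as stated. Left-invariance of $c'$ only says that if $g_m$ lies between $g_i$ and $g_j$ then $\gamma g_m$ lies between $\gamma g_i$ and $\gamma g_j$; it says nothing about \emph{where} in that arc the midpoint construction will place it. Concretely, if $g_m$ is the first uncontrolled index landing in a long gap $A$, then $\iota_{c'}(g_m)$ is the midpoint of $A$; but $\gamma g_m=g_{m'}$ may have many other uncontrolled indices landing in the image gap $A'$ before it in the enumeration, so $\iota_{c'}(\gamma g_m)$ can sit in a small dyadic sub-arc of $A'$ far from its midpoint. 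Thus $\rho_{c'}(\gamma)$ can differ from the affine map $\rho_c(\gamma)$ on $A$ by an amount comparable to $|A'|$, independently of how large you take $M$. No cocycle relation prevents this: the discrepancy comes from the mismatch between the enumeration order in $A$ and in $A'$, which the midpoint rule is sensitive to but the circular order is not.

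The statement, however, only asks that \emph{some} dynamical realization of $c'$ based at $x_0$ lie in $U$, not the one produced by your fixed enumeration. The actual Mann--Rivas argument exploits exactly this freedom: one conjugates the naive $\rho_{c'}$ by a homeomorphism close to the identity (fixing $x_0$) that straightens out the finitely many long gaps, or equivalently one builds the realization of $c'$ by a procedure adapted to the gap structure of $\rho_c$ rather than by blindly reusing the enumeration. Your sketch never invokes this flexibility, and without it the argument does not close.
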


\begin{theorem}[{\cite[Corollary 3.24]{MR3887426}}]\label{thm:dyn_real_criterion}
  Suppose that $\rho \colon \GG \to \Homeo_+(S^1)$ is an action with an exceptional minimal set $K$, that $\rho(\GG)$ acts transitively on the set of connected components of $S^1 \setminus K$, and that, for some component $I$, the stabilizer of $I$ is nontrivial and acts on $I$ as the dynamical realization of a linear order with basepoint $x_0 \in I$. Then $\rho$ is the dynamical realization of a circular order with basepoint $x_0$.
\end{theorem}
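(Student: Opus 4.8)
The plan is to read off a circular order $c$ from the $\rho$-orbit of $x_0$, and then to exhibit $\rho$ as conjugate to the dynamical realization $\rho_c$ of $c$ based at $x_0$.

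First I would define $c \colon \GG^3 \to \{0, \pm 1\}$ by $c(g_1, g_2, g_3) = \mathrm{ord}(\rho(g_1) x_0, \rho(g_2) x_0, \rho(g_3) x_0)$. Conditions (i) and (ii) of a circular order are inherited directly from the orientation cocycle $\mathrm{ord}$, and (iii) holds because every $\rho(g)$ preserves the cyclic orientation of $S^1$. The one substantive point is that $c$ is \emph{non-degenerate}, equivalently that the orbit map $g \mapsto \rho(g) x_0$ is injective, i.e. that $\mathrm{Stab}(x_0) = \{1\}$; this is exactly what lets $\rho$ be a dynamical realization rather than a realization of a proper quotient. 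I would check it in two cases. Writing $H = \mathrm{Stab}(I)$: if $g \notin H$ then $\rho(g) I \neq I$ are disjoint gaps, so $\rho(g) x_0 \in \rho(g) I$ differs from $x_0 \in I$; and if $g \in H$, then $g$ fixes $x_0$ only if $g = 1$, since by hypothesis $\rho|_H$ acts on $I$ as the dynamical realization of a linear order, for which the basepoint has trivial stabilizer. Hence $\mathrm{Stab}(x_0) = \{1\}$ and $c \in CO(\GG)$.

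Next I would set up the conjugacy. Let $\rho_c$ be the dynamical realization of $c$ based at $x_0$, with orbit $\iota(\GG)$. Since $\rho$ and $\rho_c$ induce the same circular order $c$ on their respective (injective) orbits of $x_0$, the assignment $\rho(g) x_0 \mapsto \rho_c(g) x_0 = \iota(g)$ is a cyclic-order-preserving bijection of $\mathcal{O} := \rho(\GG) x_0$ onto $\iota(\GG)$ that intertwines the two actions. As an order isomorphism between dense subsets of $S^1$ it extends uniquely to an equivariant homeomorphism $\bar\phi \colon \overline{\mathcal{O}} \to \overline{\iota(\GG)}$ between the two orbit closures, still intertwining $\rho$ and $\rho_c$ by continuity. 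Note that the exceptional minimal set $K$ is carried by $\bar\phi$ onto the minimal set of $\rho_c$, and the gaps of $K$, permuted transitively by $\rho$ with stabilizer $H$ on $I$, correspond to the gaps of the minimal set of $\rho_c$, which are indexed by $\GG/H$; inside $I$ both actions restrict to dynamical realizations of the same linear order on $H$.

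It remains to extend $\bar\phi$ over the complementary intervals of $\overline{\mathcal{O}}$ to a global homeomorphism $\phi \in \Homeo_+(S^1)$ with $\phi \rho(g) = \rho_c(g) \phi$, and this is the main obstacle. These complementary intervals are \emph{wandering}: across distinct gaps of $K$ they are separated because the gaps are freely permuted modulo $H$, and inside $I$ they are wandering because $\rho|_H$ is a dynamical realization of a linear order, whose complementary intervals are wandering. Thus I may define $\phi$ on one interval in each $\rho$-orbit and propagate it equivariantly. The only genuine choice is on (the intervals inside) the gap $I$: here I would use that $\rho|_H$ and the corresponding stabilizer action of $\rho_c$ are two dynamical realizations of the same linear order on $H$, hence conjugate by an $H$-equivariant homeomorphism of intervals, which I take as $\phi$ on $I$. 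Equivariance then determines $\phi$ on every gap $\rho(g) I$, with no ambiguity precisely because the chosen map on $I$ intertwines the two $H$-actions; and continuity of the assembled $\phi$ at every point of $K$ holds because gap lengths shrink to $0$ on both sides. Granting that $\phi$ is a homeomorphism, we obtain $\rho = \phi^{-1} \rho_c \phi$, so $\rho$ is conjugate to $\rho_c$; by the independence of the dynamical realization from the auxiliary choices, this says exactly that $\rho$ is the dynamical realization of $c$ based at $x_0$.
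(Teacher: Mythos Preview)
This theorem is not proved in the present paper; it is quoted from Mann--Rivas \cite[Corollary~3.24]{MR3887426}, so there is no in-paper proof to compare your proposal against.

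Your outline has the right architecture, but there is a genuine gap at the sentence ``as an order isomorphism between dense subsets of $S^1$ it extends uniquely to an equivariant homeomorphism $\bar\phi \colon \overline{\mathcal{O}} \to \overline{\iota(\GG)}$.'' The orbits are \emph{not} dense in $S^1$, and a cyclic-order-preserving bijection between countable subsets of $S^1$ need not extend to a homeomorphism of their closures: for instance $\QQ\cap(0,1)$ and $\QQ\cap(0,\tfrac12)$, viewed inside $S^1=\RR/\ZZ$, are isomorphic as countable dense cyclically ordered sets, yet their closures are the whole circle and a proper arc. What actually forces the extension to exist here is the hypothesis you invoke only \emph{later}: that $\rho|_H$ on $I$ is already a dynamical realization of a linear order. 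Combined with transitivity on gaps, this pins down the gap and accumulation structure of $\mathcal{O}$ inside every component of $S^1\setminus K$ and makes it match that of $\iota(\GG)$. You should bring that argument forward to justify the existence of $\bar\phi$, rather than asserting $\bar\phi$ first and then using the hypothesis for a separate purpose.

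You are also tacitly assuming several facts about $\rho_c$: that it has an exceptional minimal set, that the stabilizer of its gap through $x_0$ is exactly $H$, and that $\rho_c|_H$ on that gap is again the dynamical realization of the same linear order on $H$. These are true, but they are not free; they are essentially the content of Theorem~\ref{thm:linear_part_dyn_description} (Proposition~3.17 of \cite{MR3887426}) together with the construction of the dynamical realization, and should be cited rather than left implicit.
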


\subsection{Left orders}
In this subsection, we recall the definition and properties of left orders.
We refer the reader to \cite{MR3560661} and \cite{deroin2014groups} for further details.
\begin{definition}
  Let $\GG$ be a group.
  A total order $<$ on $\GG$ is called a \emph{left order on} $\GG$ if for every $g \in \GG$ and $g_1, g_2 \in \GG$ satisfying $g_1 < g_2$, the inequality $gg_1 < gg_2$ holds.
\end{definition}
For a left order $<$ on $\GG$ and an automorphism $\phi$ of $\GG$, we define another left order $<_{\phi}$ on $\GG$ as follows: $g_1 <_{\phi} g_2$ if and only if $\phi(g_1) < \phi(g_2)$.

Let $LO(\GG)$ be the set of left orders on $\GG$.
For a left order $<$ on $\GG$, define $\lambda \colon \GG\setminus \{ 1 \} \to \{ \pm 1 \}$ as follows; $\lambda(g) = 1$ if and only if $1 < g$.
Hence $LO(\GG)$ can be seen as a subspace of $\{ \pm 1 \}^{\GG \setminus \{ 1 \}}$.

Let $c$ be a circular order on $\GG$ is called a \emph{coboundary} if there exists a left invariant function $c' \colon \GG^2 \to \{ \pm 1, 0 \}$ such that $c(f,g,h) = c'(g,h) - c'(f,h) + c'(f,g)$ for every $f,g,h \in \GG$.
A coboundary $c$ induces a left order on $\GG$ by $g\leq h$ if and only if $c'(g,h)\geq 0$.

\begin{definition}[{\cite[Definition 3.14]{MR3887426}}]
    Let $\GG$ be a group and $c$ a circular order on $\GG$.
    A subgroup $H$ of $\GG$ is said to be \emph{convex} if the restriction of $c$ to $H$ is a left order (that is, a coboundary), and if whenever one has $c(h_1, g, h_2) = +1$ and $c(h_1, 1, h_2) = +1$ with $h_1, h_2$ in $H$ and $g \in \GG$, then $g \in H$.
\end{definition}

As shown in \cite[Lemma 3.15]{MR3887426}, there is a unique maximal convex subgroup for any $c\in CO(\GG)$, which we call the \emph{linear part} of $c$.

The linear part is characterized in terms of dynamics as follows.
\begin{theorem}[{\cite[Part of Proposition 3.17]{MR3887426}}]\label{thm:linear_part_dyn_description}
  Let $\rho$ be the dynamical realization of a circular order $c$ based at $x_0$.
  If $\rho$ admits an exceptional minimal set $K$, then the linear part is the stabilizer of the connected component of $S^1\setminus K$ that contains $x_0$.
\end{theorem}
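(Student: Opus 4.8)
Write $I$ for the connected component of $S^1 \setminus K$ containing $x_0$, and set $L = \mathrm{Stab}(I) = \{ g \in \GG : \rho(g)(I) = I \}$. The plan is to prove the two inclusions separately: that $L$ is a convex subgroup, whence $L$ is contained in the linear part, and that every convex subgroup is contained in $L$, whence the linear part is contained in $L$. Throughout I will use that, as $K$ is the invariant exceptional minimal set, $\rho(\GG)$ permutes the components of $S^1\setminus K$; in particular every point $\iota(g)=\rho(g)(x_0)$ lies in $S^1\setminus K$, and $g\in L$ if and only if $\iota(g)\in I$ (if $\rho(g)(I)=I$ then $\iota(g)\in I$, and conversely $\iota(g)\in I\cap\rho(g)(I)$ forces $\rho(g)(I)=I$ since the components are disjoint).

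First I would check that $L$ is convex. Since $L$ preserves $I$ and the orientation, it fixes the two endpoints of $I$ and acts on $I\cong\RR$ by orientation-preserving homeomorphisms, so the positions of the points $\iota(\ell)$, $\ell\in L$, along $I$ define a left-invariant linear order whose coboundary is $c|_L$; this is exactly the situation of Theorem~\ref{thm:dyn_real_criterion}. For the betweenness condition, suppose $h_1,h_2\in L$ and $g\in\GG$ satisfy $c(h_1,g,h_2)=+1$ and $c(h_1,1,h_2)=+1$. Then $\iota(g)$ and $x_0=\iota(1)$ lie on the same arc determined by $\iota(h_1),\iota(h_2)$; as $\iota(h_1),\iota(h_2)$ and $x_0$ all lie in the connected arc $I$, the sub-arc bounded by $\iota(h_1),\iota(h_2)$ and containing $x_0$ is contained in $I$, so $\iota(g)\in I$ and $g\in L$. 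Hence $L$ is convex and $L$ is contained in the linear part.

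For the reverse inclusion, let $H$ be any convex subgroup; I must show $H\subseteq L$. The decisive dynamical input is that the orbit $\rho(\GG)\cdot x_0$ is dense in $S^1$ (the dynamical realization places each $\iota(g_{n+1})$ at the midpoint of a complementary component, so the orbit accumulates everywhere), whereas $K$ is a proper minimal Cantor set. Because $c|_H$ is a coboundary, $\overline{\rho(H)\cdot x_0}\neq S^1$, so $\rho(H)\cdot x_0$ is contained in a proper arc; $\rho(H)$ then fixes the two endpoints $\xi_1,\xi_2$ of the convex hull arc $A=(\xi_1,\xi_2)\ni x_0$ of its orbit. Using density of $\rho(\GG)\cdot x_0$ together with the betweenness-closure that defines convexity, one obtains $\rho(H)\cdot x_0=\rho(\GG)\cdot x_0\cap A$, and hence $H=\{g:\iota(g)\in A\}$. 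Since orbit points avoid $K$, the inclusion $H\subseteq L$ would follow once one knows $A\subseteq\overline{I}$, i.e. that $\xi_1,\xi_2$ do not lie strictly outside the closed gap $\overline{I}$.

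The heart of the argument, and the step I expect to be the main obstacle, is precisely to rule out $A\supsetneq\overline{I}$. If $A$ extended beyond an endpoint, say $b$, of $I$, then (noting $L\subseteq H$ because $I\subseteq A$) every element of $\rho(L)$ would fix the point $\xi_2$ lying strictly outside $\overline{I}$. I would derive a contradiction from the minimality of $K$: as $K$ has no isolated points, there are points of $K$ immediately outside $b$ and inside $A$, whose $\GG$-orbit is dense in $K$; combining this with the facts that $\rho(H)$ fixes $\xi_1,\xi_2$, preserves $K$, and has $H$-orbit equal to the dense full orbit intersected with $A$, one is forced to conclude that $\overline{\rho(H)\cdot x_0}$ meets the interior of every component of $S^1\setminus K$, i.e. equals $S^1$ — contradicting that $c|_H$ is a coboundary. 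Making this precise, namely showing that a convex subgroup cannot fix a point strictly outside the closed gap $\overline{I}$, is where the minimality of $K$ enters essentially; it yields $A=\overline{I}$, hence $H\subseteq L$, and therefore the desired equality of the linear part with $\mathrm{Stab}(I)$.
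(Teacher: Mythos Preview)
The paper does not contain a proof of this theorem: it is quoted verbatim as ``Part of Proposition~3.17'' from Mann--Rivas \cite{MR3887426} and used as a black box, so there is no in-paper argument to compare your proposal against.

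As for your attempt itself, the forward inclusion (that $L=\mathrm{Stab}(I)$ is convex) is essentially correct and cleanly written. The reverse inclusion, however, has a genuine gap that you yourself flag but do not close. You reduce matters to showing that the arc $A$ spanned by a convex subgroup $H$ satisfies $A\subseteq\overline{I}$, and then sketch a contradiction in case $A\supsetneq\overline{I}$: you assert that $\rho(H)$ fixing a point strictly outside $\overline{I}$, together with minimality of $K$, would force $\overline{\rho(H)\cdot x_0}=S^1$. But the chain of implications you indicate (``combining this with the facts that $\rho(H)$ fixes $\xi_1,\xi_2$, preserves $K$, and has $H$-orbit equal to the dense full orbit intersected with $A$, one is forced to conclude\ldots'') is not an argument---it is a list of ingredients without a recipe. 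Why, concretely, does $H$ fixing $\xi_2$ outside $\overline{I}$ force the $H$-orbit of $x_0$ to visit \emph{every} gap of $K$? Minimality of $K$ tells you about $\GG$-orbits of points of $K$, not about $H$-orbits of $x_0$. You would need to promote information about $K\cap A$ being $H$-invariant to a statement about the $H$-orbit of $x_0$ escaping $A$, and that step is simply missing.

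A secondary issue: you invoke that ``$\rho(\GG)\cdot x_0$ is dense in $S^1$'' for the dynamical realization. This is correct for the midpoint construction used in this paper, but you should be aware that the dynamical realization is only well defined up to semiconjugacy, and density of the basepoint orbit is not invariant under semiconjugacy; your argument leans on the specific model, which is fine here but worth saying explicitly.
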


\subsection{Known results on isolated circular orders on free products of cyclic groups}
Let $n$ be an integer grater than $1$.
Since $\ZZ_n$ is finite, $CO(\ZZ_n)$ is a finite set.
Clay, Mann and Rivas \cite{MR3784820} proved that $\mathbb{Z}_2* \mathbb{Z}_2$ admits just four circular orders.
Hence, circular orders on the groups above are all isolated.

Mann and Rivas \cite{MR3887426} proved that for every $n \geq 1$, the non-abelian free group $F_{2n}$ of rank $2n$ admits countably many isolated circular orders which are not conjugate to each other.
After that, Malicet, Mann, Rivas and Triestino \cite{MR4033501} proved that the non-abelian free group $F_{2n-1}$ of rank $2n-1$ admits no isolated circular orders for every $n \geq 1$.
Matsumoto \cite{MR4055461} proved that $\PSL(2,\ZZ) \cong \ZZ_2 \ast \ZZ_3$ admits countably many isolated circular orders which are not the automorphic images of the others.
Applying the argument of Matsumoto \cite[Proof of Theorem 9.2]{MR4055461} to the isolated circular orders on $F_{2n}$ obtained by Mann and Rivas \cite{MR3887426}, we can show that these isolated circular orders are not the automorphic images of the others as well.

By lifting these isolated orders to certain central extensions, they obtain isolated left orders.
In fact, Mann and Rivas \cite{MR3887426} proved that the direct product $F_{2n} \times \ZZ$ admits countably many isolated left orders, and Matsumoto \cite{MR4055461} proved that the braid group $B_3$ of $3$-strand admits countably many isolated left orders, which are not the automorphic images of the others.


\section{Isolated circular orders}\label{sec:3}

\subsection{Construction of circular orders}
We fix $n \in \ZZ_{\geq 0}, k \in \ZZ_{\geq 1}$ and $m_1, \ldots, m_k \in \ZZ_{\geq 2}$ and set
\[
  G = \langle e_1, \cdots, e_k, h_1, \cdots h_{2n} \mid e_1^{m_1} = \cdots = e_k^{m_k} = 1 \rangle.
\]
That is, the group $G$ is the free product $F_{2n} * \ZZ_{m_1} * \cdots * \ZZ_{m_k}$ of cyclic groups.
Assume that $G$ is neither cyclic nor isomorphic to $\ZZ_2 * \ZZ_2$.

First we embed the group $G$ into $\Isom_+(\HH^2) \cong \PSL(2,\RR)$ as follows.
Take elliptic elements $\rho(e_1), \ldots, \rho(e_k)$ and hyperbolic elements $\rho(h_1), \ldots, \rho(h_{2n})$ of $\Isom_+(\HH^2)$ as in Figure \ref{fig:embedding}.
\begin{figure}[b]
  \centering
  \includegraphics{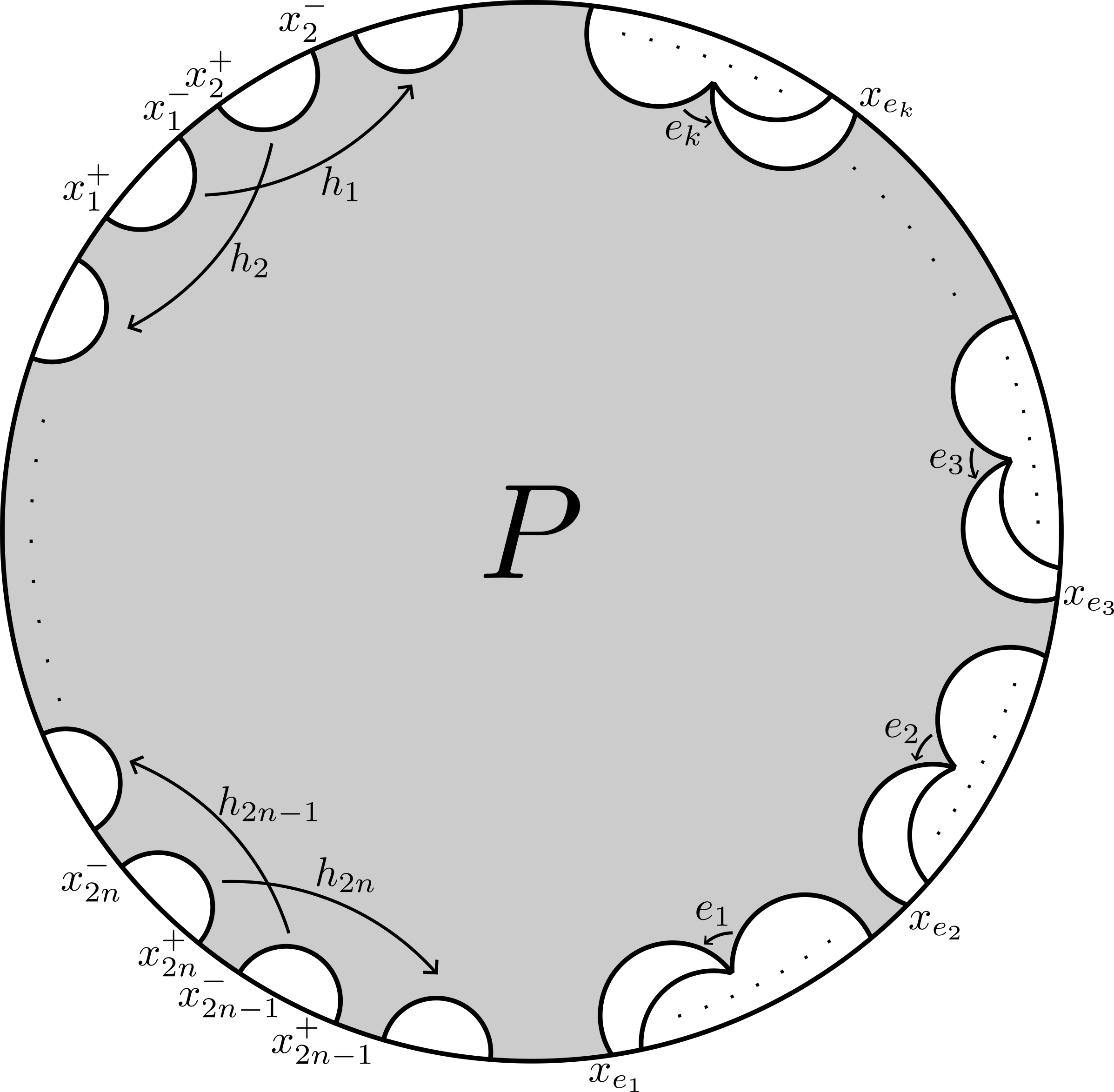}
  \caption{}\label{fig:embedding}
\end{figure}
Then, $\rho(e_i)$'s and $\rho(h_j)$'s induce a well-defined homomorphism $\rho \colon G \to \Isom_+(\HH^2)$.
By the standard ping-pong argument, we have the following.
\begin{lemma}\label{lem:inclusion}
  The homomorphism $\rho \colon G \to \Isom_+(\HH^2)$ is injective.
\end{lemma}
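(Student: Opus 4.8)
The statement to prove is Lemma~\ref{lem:inclusion}: the homomorphism $\rho \colon G \to \Isom_+(\HH^2)$ determined by the elliptic elements $\rho(e_1),\dots,\rho(e_k)$ and hyperbolic elements $\rho(h_1),\dots,\rho(h_{2n})$ of Figure~\ref{fig:embedding} is injective. This is a textbook ping-pong argument, so the main work is setting up the table correctly; I will not belabor the estimates.

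First I would record the combinatorial setup dictated by the figure: choose pairwise disjoint closed subsets $A_1,\dots,A_k$ of $\partial\HH^2 = S^1$ (one for each elliptic factor) and, for each hyperbolic generator $h_j$, two disjoint closed arcs $B_j^+$ and $B_j^-$ (attracting and repelling neighborhoods), all of these $k + 2n$ (resp. $k+2\cdot 2n$) sets pairwise disjoint. The figure is arranged so that: (a) $\rho(e_i)$ is elliptic of order $m_i$ and, for every nonzero residue $r \bmod m_i$, $\rho(e_i)^r$ maps the complement of $A_i$ into $A_i$; and (b) each $\rho(h_j)$ is hyperbolic with $\rho(h_j)^{\pm 1}$ mapping the complement of $B_j^\mp$ into $B_j^\pm$ (standard north–south dynamics, after passing to a sufficiently high power if one wants to be safe, though the figure is drawn so the first power already works). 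Let $X = \bigcup_i A_i \cup \bigcup_j (B_j^+ \cup B_j^-)$ and pick a basepoint $p \in \partial\HH^2 \setminus X$ (or simply work with the complement of $X$ as the "neutral" region).

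The core step is then the usual ping-pong lemma for free products: given a nonempty reduced word $w = s_1 s_2 \cdots s_\ell$ in $G$ — where each $s_t$ is either $h_j^{\pm 1}$ or a power $e_i^{r}$ with $1 \le r \le m_i - 1$, and consecutive letters come from distinct free factors — I show $\rho(w) \neq 1$ by evaluating $\rho(w)(p)$. Reading right to left, $\rho(s_\ell)$ sends $p$ into the attracting set associated to $s_\ell$ (some $A_i$ or $B_j^\pm$); then each subsequent $\rho(s_t)$ sends that set, which lies in the complement of the repelling set of $s_t$ because the reduced-word condition forbids $s_t$ from being an inverse/other power of $s_{t+1}$ in the same factor, into the attracting set of $s_t$. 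Hence $\rho(w)(p)$ lands in some piece of $X$, so $\rho(w)(p) \neq p$ and $\rho(w) \neq \id$. A minor bookkeeping point: for the elliptic factors one must check that ALL nontrivial powers $e_i^r$ push the outside into $A_i$, not just $e_i^{\pm 1}$; this is exactly what makes $A_i$ a legitimate ping-pong set for the whole cyclic group $\ZZ_{m_i}$, and it is built into the choice in Figure~\ref{fig:embedding} (the rotation angle and the size of $A_i$ are chosen so that the $m_i$ translates of $\partial\HH^2 \setminus A_i$ under $\langle \rho(e_i)\rangle$ all sit inside $A_i$). One should also handle, or explicitly exclude via a cyclic conjugation, the degenerate case $\ell = 1$, which is immediate since a nontrivial elliptic or hyperbolic isometry is not the identity.

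**Main obstacle.** There is no deep difficulty; the only real care needed is verifying that the sets in Figure~\ref{fig:embedding} genuinely satisfy the two disjointness-and-nesting conditions above — in particular the "all powers" condition for the elliptic pieces and the mutual disjointness of $A_i$'s and $B_j^\pm$'s — and reconciling the case $n = 0$ (no hyperbolic generators, so $G$ is a free product of finite cyclic groups) with the hypothesis that $G \not\cong \ZZ_2 \ast \ZZ_2$ and is non-cyclic, which guarantees the ping-pong table is nonempty and the relevant sets can be chosen disjoint. Once the table is in place the conclusion is the standard ping-pong lemma, and injectivity of $\rho$ follows since $\Ker\rho$ is a normal subgroup all of whose nontrivial reduced words have been shown to act nontrivially.
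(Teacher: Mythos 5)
Your proposal is correct and is essentially the argument the paper intends: the paper offers no written proof of Lemma~\ref{lem:inclusion} beyond the assertion that it follows from ``the standard ping-pong argument'' applied to the configuration of Figure~\ref{fig:embedding}, and your write-up simply makes that table explicit (disjoint attracting arcs for each cyclic factor, the ``all nontrivial powers'' condition for the elliptic generators, north--south dynamics for the hyperbolic ones, and the reduced-word induction). The only points to keep in mind are that you must not actually pass to higher powers of the $h_j$ (that would prove injectivity of a different homomorphism), and that the length-one case should be handled directly as you indicate; with those noted, the argument matches the paper's.
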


As $\Isom_+(\HH^2)$ is a subgroup of $\Homeo_+(S^1)$ through the action on the ideal boundary, the inclusion $G \to \Isom_+(\HH^2)$ given in Lemma \ref{lem:inclusion} gives rise to a homomorphism
\[
  \rho \colon G \to \Homeo_+(S^1).
\]
Under $\rho$, we regard $G$ as a subgroup of $\Homeo_+(S^1)$.
In particular, we abbreviate $\rho(g)(x)$ as $g(x)$ for $g \in G$ and $x \in S^1$.

Let $x_{e_1}$ be the point on $\partial \HH^2$ in Figure \ref{fig:embedding}.
Note that the limit set $L_{\rho}$ of $\rho$ is an exceptional minimal set since $G$ is non-amenable. 
Since the action $\rho$ is not minimal and $x_{e_1}$ is not contained in $L_{\rho}$, there exists a connected component $I_{o}$ of $S^1 \setminus L_{\rho}$ containing $x_{e_1}$.

\begin{notation}
  For distinct points $a,b \in S^1$, let $[a,b] \subset S^1$ be the closed interval from $a$ to $b$ according to the counterclockwise orientation.
  For a closed interval $J = [a,b]$ in $S^1$, we set $\partial_-(J) = a$ and $\partial_+(J) = b$.
\end{notation}

We set
\begin{align*}
  &J_i =
  \begin{cases}
    [e_i^{-1}(x_{e_i}), x_{e_{i+1}}] & i = 1, \ldots, k-1 \\
    [e_k^{-1}(x_{e_k}), h_1(x_1^+)] & i = k,
  \end{cases} \\
  &K_i^+ =
  \begin{cases}
    [x_i^+, h_{i+1}(x_{i+1}^+)] & i \colon \text{odd} \\
    [x_i^+, x_{i-1}^-] & i \colon \text{even, and}
  \end{cases}\\
  &K_i^- =
  \begin{cases}
    [h_i(x_i^-), x_{i+1}^-] & i \colon \text{odd} \\
    [h_i(x_i^-), h_{i+1}(x_{i+1}^+)] & i = 2, 4, \ldots, 2n-2 \\
    [h_{2n}(x_{2n}^-), x_{e_1}] & i = 2n.
  \end{cases}
\end{align*}
(See Figure \ref{fig:transition}.)
Let $P$ be the polygon depicted in Figure \ref{fig:embedding}, that is, the union of the intervals $J_i$'s, $K_i^{\pm}$'s of $\partial \HH^2$ and a fundamental domain of $\rho(G)$ in $\HH^2$.
Here we regard $P$ as a closed subset of $\HH^2 \cup \partial \HH^2$.

\begin{lemma}\label{lem:transitive_P}
  For every connected component $J$ of $P \cap \partial \HH^2$, there exists $g \in G$ such that $J$ is contained in $g(I_o)$.
\end{lemma}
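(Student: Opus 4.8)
The plan is to prove the statement by a direct, case-by-case inspection of the connected components of $P\cap\partial\HH^2$. First I would record that these components are exactly the arcs $J_1,\dots,J_k$ and $K_i^{\pm}$ written down above: for consecutive arcs the relevant endpoints do not match (e.g. $J_i$ ends at $x_{e_{i+1}}$ while $J_{i+1}$ starts at $e_{i+1}^{-1}(x_{e_{i+1}})$, and these are separated by a geodesic side of $P$), so no two of the listed arcs are adjacent and $P\cap\partial\HH^2$ has no further components. Next, since each such arc $J$ is a free side of the fundamental polygon $P$, its interior in $S^1$ is disjoint from $L_\rho$ and hence lies in a single connected component (gap) of $S^1\setminus L_\rho$; it therefore suffices to show that this gap lies in the $\rho(G)$-orbit of $I_o$.

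To this end, for each arc $J$ I would exhibit an explicit short word $g\in G$ with $g^{-1}(J)\subseteq\overline{I_o}$, reading it off from the ping-pong configuration of Figure~\ref{fig:embedding}. The input is a description of $I_o$ itself: since $I_o$ is the gap through $x_{e_1}$, its endpoints $\partial_{-}(I_o),\partial_{+}(I_o)\in L_\rho$ can be located from the arrangement of the ping-pong domains around the circle, and once they are known one reads off from the formulas defining $J$ the generator (or power of a generator) that pushes $J$ into $\overline{I_o}$. Concretely I expect $g$ to be (a power of) $h_j$ for the arcs $K_i^{\pm}$ and (a power of) $e_i$ for the arcs $J_i$, with $g=1$ working for the arc $K_{2n}^{-}$, which abuts $x_{e_1}$. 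Once $g^{-1}(J)\subseteq\overline{I_o}$ is verified, I conclude as follows: $g^{-1}(J)$ with its endpoints removed is a connected subset of $S^1\setminus L_\rho$ (this set being $\rho(G)$-invariant) that meets $I_o$, so it is contained in $I_o$, and therefore $J$ lies in the gap $g(I_o)$.

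The step I expect to be the main obstacle is the bookkeeping for the elliptic generators. For a hyperbolic $h_j$ the ping-pong picture has a single attracting and a single repelling arc, so tracking where $h_j$ moves the endpoints of a $K$-arc is straightforward; but for an order-$m_i$ elliptic element $e_i$ the ping-pong domain is a union of $m_i$ arcs cyclically permuted by $e_i$ around its interior fixed point, so the definition of $J_i$ and the claim that a suitable power of $e_i$ carries it into $\overline{I_o}$ have to be checked against that cyclic arrangement rather than the simpler two-arc picture. A related point needing care is choosing the \emph{correct} generator (and, for the $e_i$, the correct exponent) and not its inverse: a wrong choice sends $I_o$ to a disjoint gap, so the argument genuinely relies on the orientation conventions and on the placement of the points $x_i^{\pm}$, $x_{e_i}$ fixed before the statement. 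Assembling the finitely many cases gives the lemma; I note that this is precisely what is needed to cover every gap of $L_\rho$ by $\rho(G)$-translates of $\overline{I_o}$, which supplies the transitivity hypothesis required to apply Theorem~\ref{thm:dyn_real_criterion} later on.
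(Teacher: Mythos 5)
Your overall strategy coincides with the paper's: both arguments are a finite, explicit inspection of the fundamental polygon that produces, for each arc $J\in\{J_1,\dots,J_k,K_1^{\pm},\dots,K_{2n}^{\pm}\}$, an element $h\in G$ with $h(J)\subset \overline{I_o}$, and your concluding connectedness step (an arc minus its endpoints is connected, avoids $L_\rho$, and meets $I_o$, hence lies in it) is sound. The paper organizes the inspection as a single chain of side-pairing identities, $\partial_+(J_{i-1})=\partial_-(e_i(J_i))$ with $J_0=K_{2n}^-$, together with the analogous identities for the $K$-arcs; these show that the translates $J_0,\ e_1(J_1),\ e_1e_2(J_2),\dots,\ \beta(J_k),\ \beta h_1(K_1^+),\ \beta h_1h_2(K_2^+),\ \beta h_1h_2h_1^{-1}(K_1^-),\ \beta[h_1,h_2](K_2^-),\dots$ (with $\beta=e_1\cdots e_k$) are laid end to end inside $I_o$, terminating at $\alpha(J_0)$. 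This produces all the witnesses at once and, as a byproduct, the description of $\mathrm{Stab}(I_o)=\langle\alpha\rangle$ used in Lemma \ref{lem:stabilizer}.

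The one concrete flaw in your sketch is the predicted form of the witnesses. The element carrying $J_i$ into $\overline{I_o}$ is $(e_1\cdots e_i)^{-1}$, and the elements for the $K$-arcs are the partial products $(\beta h_1)^{-1}$, $(\beta h_1h_2)^{-1}$, $(\beta[h_1,h_2])^{-1}$, and so on --- genuinely composite words, not single generators or their powers. A power of $e_i$ only moves $J_i$ among the arcs encircling the fixed point of $\rho(e_i)$, and for $i\ge 2$ those sit in translates of $I_o$ by words ending in $e_{i-1}^{-1}\cdots e_1^{-1}$, none of which is $I_o$ itself (their classes modulo $\mathrm{Stab}(I_o)=\langle\alpha\rangle$ are nontrivial, apart from the degenerate case $n=0$, $i=k$); similarly $h_j^{\pm 1}$ alone does not reach $\overline{I_o}$ from $K_j^{\pm}$. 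Since you defer the actual identification to the figure and yourself flag this bookkeeping as the delicate step, carrying out your plan honestly would force the correct words on you; but as stated, the guessed elements would fail the verification you propose. One further small point: the components $J$ are closed arcs, so to conclude $J\subset g(I_o)$ rather than $J\subset \overline{g(I_o)}$ you also need their endpoints to avoid $L_\rho$, which holds because they are $G$-translates of the marked points $x_{e_i}$, $x_l^{\pm}$ lying in the domain of discontinuity.
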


\begin{proof}
  First note that the set of connected components of $P \cap \partial \HH^2$ is $\{ J_1, \ldots, J_k, K_1^{\pm}, \ldots, K_{2n}^{\pm} \}$.
  We set $J_0 = K_{2n}^{-}$ and $K_0 = J_k$.
  Then for every $i \in \{ 1, \ldots, k \}$, we have $\partial_+(J_{i-1}) = \partial_-(e_i(J_{i}))$.
  Moreover, we have $\partial_+(K_j^{\pm}) = \partial_-(h_{j+1}^{\pm 1}(K_{j+1}^{\pm}))$ for every odd integer $j$ in $\{ 1, \ldots, 2n \}$,
  and $\partial_+(K_j^{+}) = \partial_-(h_{j-1}^{-1}(K_{j-1}^{-}))$ and $\partial_+(K_{j-1}^{-}) = \partial_-(h_{j}(K_{j}^+))$ for every even integer $j$ in $\{ 1, \ldots, 2n \}$ (see Figure \ref{fig:transition}).
  Hence the gap $I_o$ is depicted as in Figure \ref{fig:interval}, where we set $\beta = e_1 \cdots e_k$ and $\gg_i = [h_{2i-1}, h_{2i}]$ for $i = 1, \ldots, n$.
  \begin{figure}
    \centering
    \includegraphics{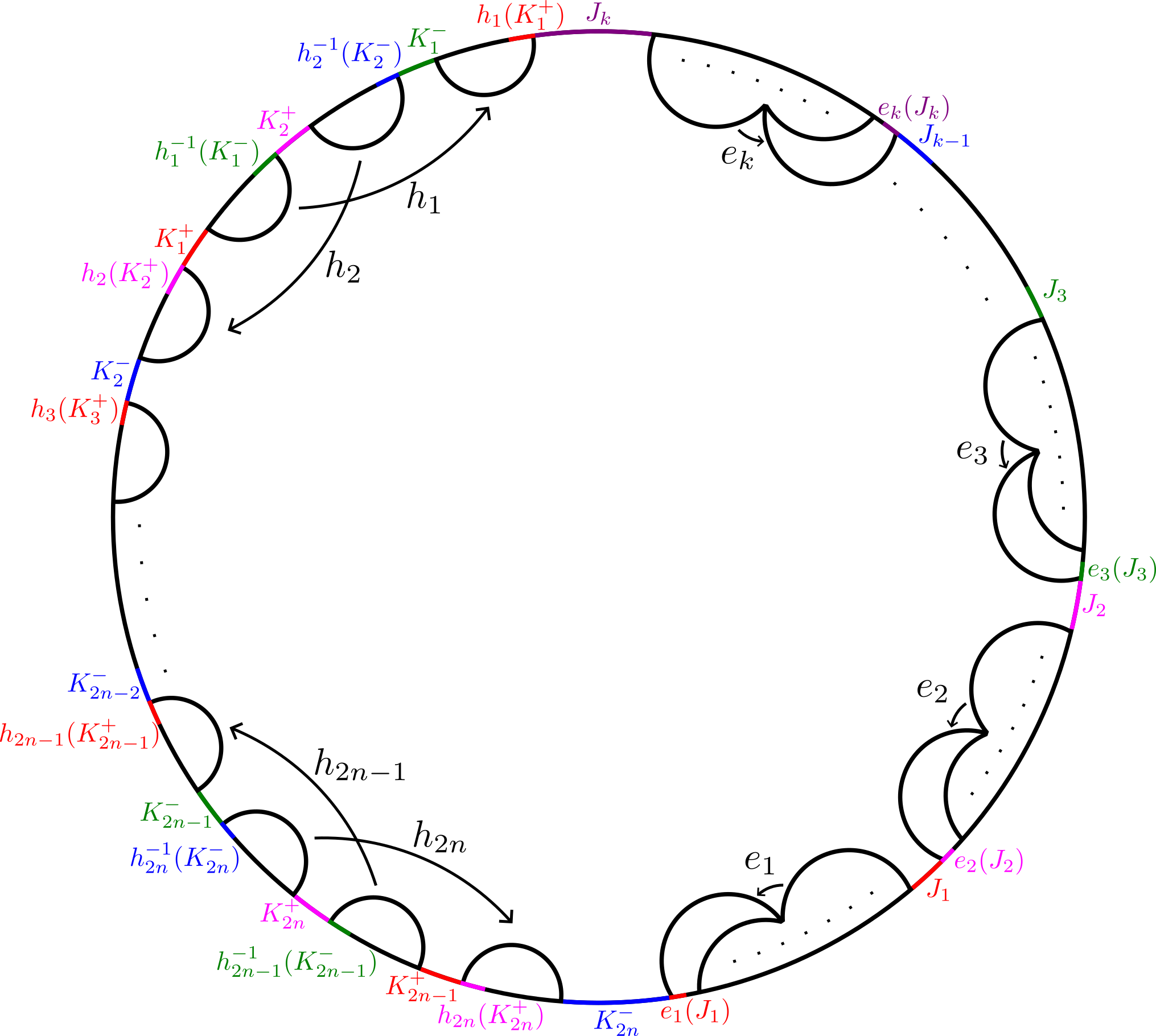}
    \caption{}\label{fig:transition}
  \end{figure}
  \begin{figure}
    \centering
    \includegraphics[scale=1.4,left]{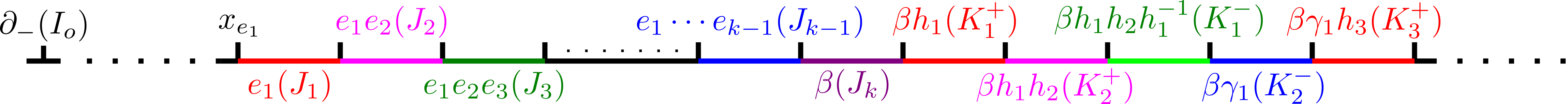}

    \bigskip
    \includegraphics[scale=1.4,right]{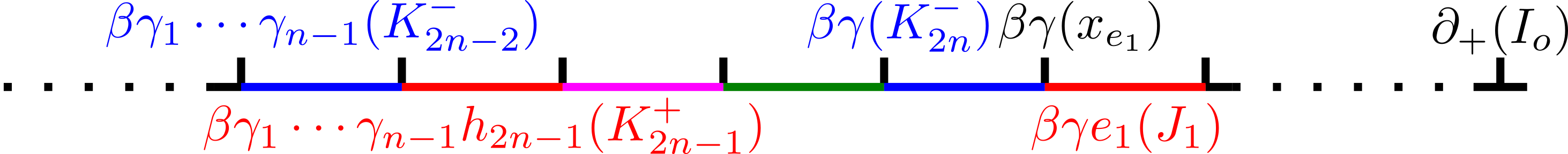}   
    \caption{}\label{fig:interval}
    \end{figure}
    This completes the proof.
\end{proof}

\begin{lemma}\label{lem:transitive_I_o}
  For every connected component $J$ of $\partial \HH^2 \setminus L_{\rho}$, there exists $g \in G$ such that $I_o = g(J)$.
  In particular, $\rho(G)$ acts transitively on the set of connected components of $\partial \HH^2 \setminus L_{\rho}$.
\end{lemma}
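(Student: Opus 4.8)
The plan is to deduce this quickly from Lemma \ref{lem:transitive_P} together with the fact that the translates of the fundamental polygon $P$ tessellate $\overline{\HH^2}\setminus L_\rho$. First I would record this tessellation statement precisely: since $P$ is a fundamental polygon for the Fuchsian group $\rho(G)$ (this is what the ping-pong configuration of Figure \ref{fig:embedding} produces), the family $\{g(P)\}_{g\in G}$ covers $\overline{\HH^2}\setminus L_\rho$, with pairwise disjoint interiors and in a locally finite way. Restricting to the ideal boundary, this yields
\[
  \partial\HH^2\setminus L_\rho \;=\; \bigcup_{g\in G}\, g\bigl(P\cap\partial\HH^2\bigr) \;=\; \bigcup_{g\in G}\ \bigcup_{J'}\, g(J'),
\]
where $J'$ runs over the finitely many connected components $J_1,\dots,J_k,K_1^{\pm},\dots,K_{2n}^{\pm}$ of $P\cap\partial\HH^2$ identified in the proof of Lemma \ref{lem:transitive_P}.

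Next, by Lemma \ref{lem:transitive_P} each such $J'$ is contained in some translate $h_{J'}(I_o)$ of $I_o$. Since $L_\rho$, and hence the collection of connected components of $\partial\HH^2\setminus L_\rho$, is $\rho(G)$-invariant, each translate $g\,h_{J'}(I_o)$ is again a single such component; consequently every piece $g(J')$ appearing in the display above is contained in one component of $\partial\HH^2\setminus L_\rho$ that is a translate of $I_o$. Combining this with the covering, \emph{every point} of $\partial\HH^2\setminus L_\rho$ lies in some translate of $I_o$.

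Finally, let $J$ be an arbitrary connected component of $\partial\HH^2\setminus L_\rho$ and choose any $y\in J$. By the previous step there is $g\in G$ with $y\in g(I_o)$; since $g(I_o)$ is itself a connected component of $\partial\HH^2\setminus L_\rho$ and meets $J$ at $y$, and distinct components of an open subset of $S^1$ are disjoint, we conclude $J=g(I_o)$, i.e.\ $I_o=g^{-1}(J)$. This gives the first assertion, and transitivity on the set of components is then immediate: if $I_o=g_1(J_1)=g_2(J_2)$ then $J_2=g_2^{-1}g_1(J_1)$.

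The only content beyond Lemma \ref{lem:transitive_P} is the tessellation statement used in the first step. For a ping-pong group acting on $\HH^2$ this is standard (it follows from Poincaré's polygon theorem, or from a direct ping-pong argument of the same flavour as the one establishing $P$), so I would either cite it or insert a short verification. I expect the only place requiring care to be the bookkeeping there — identifying $P\cap\partial\HH^2$ with the arcs $J_i,K_i^{\pm}$ and checking local finiteness of the translates near $\partial\HH^2$ — while the rest of the argument above is purely formal.
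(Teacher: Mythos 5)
Your proposal is correct and follows essentially the same route as the paper: both arguments combine the (asserted, not re-proved) fact that the translates of $P$ tessellate $\overline{\HH^2}\setminus L_\rho$ with Lemma \ref{lem:transitive_P}, and then conclude by noting that two gaps of $L_\rho$ that meet must coincide. The only difference is cosmetic — the paper pulls the given component $J$ back until it contains a piece of $P\cap\partial\HH^2$, while you push the pieces of $P\cap\partial\HH^2$ forward to cover $\partial\HH^2\setminus L_\rho$ — and your explicit flagging of the tessellation step as the one input needing justification matches what the paper leaves implicit.
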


\begin{proof}
  Let $J$ be a connected component of $\partial \HH^2 \setminus L_{\rho}$.
  Since $P$ tessellates $\overline{\HH^2} \setminus L_{\rho}$, there exists $h \in G$ and $I \in \{ J_1, \ldots, J_k, K_1^{\pm}, \ldots, K_{2n}^{\pm} \}$ such that $I \subset h(J)$.
  In particular, together with Lemma \ref{lem:transitive_P}, there exists $g \in G$ such that $I_o \cap g(J) \neq \emptyset$.
  Since $g(J)$ is also a connected component of $\partial \HH^2 \setminus L_{\rho}$ and $I_o \cap g(J) \neq \emptyset$, either $I_o \cap g(J) \subset \partial I_o$ or $I_o = g(J)$ holds.
  However, the former statement contradicts to the fact that $L_{\rho}$ is homeomorphic to the Cantor set.
  Hence we have $I_o = g(J)$.
\end{proof}

We set $\alpha = e_1\cdots e_k[h_1, h_2]\cdots [h_{2n-1},h_{2n}]$.
Figure \ref{fig:interval} implies the following.
\begin{lemma}\label{lem:stabilizer}
  The stabilizer of $I_o$ is the infinite cyclic subgroup of $G$ generated by $\alpha$, and acts on $I_o$ as the dynamical realization of a linear order with basepoint $x_{e_1} \in I_o$.
\end{lemma}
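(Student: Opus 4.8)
I would deduce this from the description of the gap $I_o$ obtained in Lemma~\ref{lem:transitive_P} and recorded in Figure~\ref{fig:interval}. From the chain of endpoint identifications in the proof of Lemma~\ref{lem:transitive_P}, one sees that $I_o$ is the union of a bi-infinite sequence $(T_\ell)_{\ell\in\ZZ}$ of closed intervals, each a $G$-translate of one of $J_1,\dots,J_k,K_1^\pm,\dots,K_{2n}^\pm$, satisfying $\partial_+(T_\ell)=\partial_-(T_{\ell+1})$ for all $\ell$, with $x_{e_1}=\partial_+(T_0)=\partial_-(T_1)$, and with the $T_\ell$ accumulating onto $\partial_-(I_o)$ as $\ell\to-\infty$ and onto $\partial_+(I_o)$ as $\ell\to+\infty$; the fact that the $T_\ell$ exhaust $I_o$ uses that $P$ tessellates $\overline{\HH^2}\setminus L_\rho$. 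Tracing these identifications through one full period shows that $\alpha$ acts by the shift $\alpha(T_\ell)=T_{\ell+N}$, where $N\geq 1$ is the number of intervals occurring in one period. In particular $\alpha(I_o)=\bigcup_\ell\alpha(T_\ell)=\bigcup_\ell T_{\ell+N}=I_o$, so $\langle\alpha\rangle\subseteq\mathrm{Stab}(I_o)$; and for $j\neq 0$ the homeomorphism $\alpha^j$ carries each $T_\ell$ onto $T_{\ell+jN}\neq T_\ell$, so it has no fixed point in $I_o$.

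Next I would show that $\mathrm{Stab}(I_o)$ is infinite cyclic and generated by $\alpha$. Any $g\in\mathrm{Stab}(I_o)$ is orientation preserving and, since $L_\rho$ is $G$-invariant, carries components of $\partial\HH^2\setminus L_\rho$ to components; hence it permutes the $T_\ell$ by an order-preserving bijection, i.e.\ by a shift $T_\ell\mapsto T_{\ell+s(g)}$. The resulting map $s\colon\mathrm{Stab}(I_o)\to\ZZ$ is a homomorphism, and it is injective: an element of $\ker s$ fixes every $T_\ell$ setwise, hence fixes the infinitely many points $\partial_+(T_\ell)$ of $\partial\HH^2$, and a nontrivial element of $\PSL(2,\RR)$ fixes at most two points of $\partial\HH^2$, so (as $\rho$ is injective by Lemma~\ref{lem:inclusion}) it is trivial. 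Since the image of $s$ contains $s(\alpha)=N\neq 0$, it is a nonzero subgroup of $\ZZ$, so $\mathrm{Stab}(I_o)$ is infinite cyclic, say $\mathrm{Stab}(I_o)=\langle g_0\rangle$ with $\alpha=g_0^m$. To conclude that $m=\pm1$, it suffices to check that $\alpha$ is not a proper power in $G$. But $\alpha=e_1\cdots e_k[h_1,h_2]\cdots[h_{2n-1},h_{2n}]$ is a cyclically reduced word in the free product $G=\ZZ_{m_1}\ast\cdots\ast\ZZ_{m_k}\ast\langle h_1\rangle\ast\cdots\ast\langle h_{2n}\rangle$ (its first and last syllables lie in different free factors), and exactly one of its syllables (namely $e_1$) lies in the factor $\ZZ_{m_1}$; if $\alpha=\beta^d$ with $d\geq 2$, then, choosing $\beta$ cyclically reduced, the normal form of $\alpha$ is the $d$-fold concatenation of that of $\beta$, so that syllable count is a multiple of $d$, forcing $d=1$. (Alternatively, when $n\geq1$ one retracts $G$ onto $F_{2n}=\langle h_1,\dots,h_{2n}\rangle$ by killing the $e_i$ and notes that $[h_1,h_2]\cdots[h_{2n-1},h_{2n}]$, in which $h_1$ occurs once, is not a proper power in $F_{2n}$.) Hence $\mathrm{Stab}(I_o)=\langle\alpha\rangle\cong\ZZ$.

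Finally, I would identify $I_o$ with $\RR$ by an orientation-preserving homeomorphism carrying $x_{e_1}$ to $0$, and transport the $\langle\alpha\rangle$-action. Then $\alpha$ acts as a fixed-point-free orientation-preserving homeomorphism of $\RR$, hence is topologically conjugate to a translation; so the $\langle\alpha\rangle$-action on $I_o$ is conjugate to the dynamical realization of a linear order on $\ZZ\cong\langle\alpha\rangle$ (both being topologically conjugate to $x\mapsto x+1$), and since that realization does not depend on the basepoint up to conjugacy, it is the dynamical realization of such an order based at $x_{e_1}\in I_o$, as claimed.

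The step I expect to require the most care is the first one: extracting from Figure~\ref{fig:interval} the exact cyclic pattern of $G$-translates that tiles $I_o$, and verifying that $\alpha$---rather than a shorter element---realizes precisely one period of the shift. Once that combinatorial picture is in place, the remaining steps are routine.
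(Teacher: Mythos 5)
Your argument is correct and follows the same route as the paper, which essentially just reads the statement off Figure~\ref{fig:interval}; you have supplied the details (the tiling of $I_o$ by translates of the $J_i$, $K_i^{\pm}$, the shift homomorphism $s$ and its injectivity via the fixed-point bound for isometries of $\HH^2$, and the fact that $\alpha$ is not a proper power) that the paper leaves implicit. Two small points to tighten. First, the claim that $g\in\mathrm{Stab}(I_o)$ permutes the $T_\ell$ does not follow from $g$ permuting the components of $\partial\HH^2\setminus L_\rho$, since all the $T_\ell$ lie in the single component $I_o$; it follows instead from the tessellation of $\overline{\HH^2}\setminus L_\rho$ by translates of $P$ (which you already invoke for the exhaustion claim), because each $T_\ell$ is the boundary trace of such a translate and the traces contained in $I_o$ are exactly the $T_\ell$. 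Second, in your parenthetical alternative, $h_1$ occurs \emph{twice} (with exponent sum zero) in $[h_1,h_2]\cdots[h_{2n-1},h_{2n}]$, so that retraction argument does not work as stated; your main syllable-counting argument in the free product decomposition of $G$ is the one to keep, and it does cover all cases.
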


Lemmas \ref{lem:transitive_I_o} and \ref{lem:stabilizer}, together with Theorems \ref{thm:dyn_real_criterion} and \ref{thm:linear_part_dyn_description}, imply the following.
\begin{proposition}\label{prop:co_from_rho}
  The action $\rho$ is a dynamical realization of a circular order $c \in CO(G)$ with basepoint $x_{e_1}$, and the linear part is the subgroup of $G$ generated by $\alpha$.
\end{proposition}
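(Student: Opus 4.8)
The plan is to assemble the lemmas just established and feed them into the two cited results of Mann--Rivas. First, recall from the discussion preceding Lemma~\ref{lem:transitive_P} that the limit set $L_\rho \subset \partial\HH^2 = S^1$ is an exceptional minimal set for $\rho$: since $G$ is non-amenable, any minimal set of $\rho$ is either all of $S^1$ or a Cantor set, and the existence of the gap $I_o$ (a connected component of $\partial\HH^2 \setminus L_\rho$ containing $x_{e_1}$) rules out the former. So $\rho$ is an action of $G$ on $S^1$ admitting an exceptional minimal set $K = L_\rho$.

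Next, Lemma~\ref{lem:transitive_I_o} states precisely that $\rho(G)$ acts transitively on the set of connected components of $S^1 \setminus L_\rho$, and Lemma~\ref{lem:stabilizer} states that the stabilizer of the particular component $I_o$ is the infinite cyclic subgroup $\langle \alpha \rangle$ — in particular nontrivial — and that it acts on $I_o$ as the dynamical realization of a linear order with basepoint $x_{e_1} \in I_o$. These are exactly the three hypotheses of Theorem~\ref{thm:dyn_real_criterion}, applied with the component $I = I_o$ and basepoint $x_0 = x_{e_1}$. That theorem therefore produces a circular order $c \in CO(G)$ whose dynamical realization based at $x_{e_1}$ is $\rho$.

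Finally, to identify the linear part: $\rho$ is now known to be the dynamical realization of $c$ based at $x_{e_1}$ and it admits the exceptional minimal set $L_\rho$, so Theorem~\ref{thm:linear_part_dyn_description} applies and tells us the linear part of $c$ equals the stabilizer in $G$ of the connected component of $S^1 \setminus L_\rho$ containing $x_{e_1}$. That component is $I_o$ by definition, and its stabilizer is $\langle \alpha \rangle$ by Lemma~\ref{lem:stabilizer}; hence the linear part of $c$ is the subgroup generated by $\alpha$.

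There is essentially no hard step here: the substantive work has already been carried out in the ping-pong setup and in Lemmas~\ref{lem:transitive_I_o} and~\ref{lem:stabilizer}, and the proof is a bookkeeping application of the two quoted theorems. The one point that merits a word of care — and the closest thing to an obstacle — is the verification that $L_\rho$ is genuinely \emph{exceptional}, i.e.\ a Cantor set rather than all of $S^1$, since this is precisely what licenses both Theorem~\ref{thm:dyn_real_criterion} and Theorem~\ref{thm:linear_part_dyn_description}; as indicated, it follows from non-amenability of $G$ together with the non-minimality of $\rho$ witnessed by the wandering interval $I_o$.
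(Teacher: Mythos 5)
Your proposal is correct and follows exactly the paper's argument: the paper derives this proposition in one line from Lemmas \ref{lem:transitive_I_o} and \ref{lem:stabilizer} combined with Theorems \ref{thm:dyn_real_criterion} and \ref{thm:linear_part_dyn_description}, and your write-up simply makes that bookkeeping explicit (including the observation, already noted before Lemma \ref{lem:transitive_P}, that $L_\rho$ is an exceptional minimal set by non-amenability of $G$ and non-minimality of $\rho$).
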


\subsection{Free subgroup of $G$}
In this section, we find a free subgroup $F$ of $G$ of finite index, the restriction of $\rho$ to which is a ping-pong action.

For $l \in \ZZ_{> 0}$, we set $[l] = \{ 1, \ldots, l \}$.
For $2 \leq t \leq k$, we set
\begin{align*}
  \Lambda_t &= \big(([m_1] \times \cdots \times [m_{t-1}])\setminus \{ (m_1,\ldots, m_{t-1}) \} \big) \times [m_t-1], \\
  \Xi_t &= [m_{t+1}] \times \cdots \times [m_k],
\end{align*}
where we regard $\Xi_k = \{ 0 \}$.
For $\lambda = (u_1, \ldots, u_t) \in \Lambda_t$ and $\xi = (u_{t+1}, \ldots, u_{k}) \in \Xi_t$, we set
\begin{align*}
  f_{\lambda} &= [e_t^{u_t}, e_{t-1}^{u_{t-1}}\cdots e_1^{u_1}], \\
  g_{\xi} &= e_k^{u_k}\cdots e_{t+1}^{u_{t+1}},
\end{align*}
where we regard $g_{\xi} = 1$ when $\xi \in \Xi_k$.
Let $S_1$ and $S_2$ be the subsets of $G$ defined by
\begin{align*}
  S_1 &= \left\{ {}^{g_{\xi}} f_{\lambda} \mid 2 \leq t \leq k, \lambda \in \Lambda_t, \xi \in \Xi_t \right\}, \\
  S_2 &= \left\{ {}^{e_k^{i_k}\cdots e_1^{i_1}} h_l \, \middle| \, l \in [2n], (i_1, \cdots i_k) \in [m_1] \times \cdots \times [m_k] \right\}.
\end{align*}
Let $F$ be the subgroup of $G$ generated by $S_1 \cup S_2$.
Let $G_1$ be the subgroup of $G$ generated by $\{ e_1, \ldots, e_k \}$.

\begin{notation}
  Let $\GG$ be a group.
  For elements $g,h$ of $\GG$, we set $[g,h] = ghg^{-1}h^{-1}$ and ${}^{g}{h} = ghg^{-1}$.
\end{notation}

\begin{lemma}\label{lem:sort_1}
  Let $w$ be an element of $G_1$.
  Then there exist an element $\gamma$ of the subgroup $\langle S_1 \rangle$ generated by $S_1$ and $i_1, \ldots, i_k$ such that $\gamma w = e_k^{i_k} \cdots e_1^{i_1}$.
\end{lemma}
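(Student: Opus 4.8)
The plan is to argue by induction on the syllable length $\ell(w)$ of $w$ with respect to the free product decomposition $G_1 = \ZZ_{m_1} \ast \cdots \ast \ZZ_{m_k}$. The case $\ell(w) = 0$ is trivial: $w = 1 = e_k^{0}\cdots e_1^{0}$ and $\gamma = 1$ works. For $\ell(w) \geq 1$, write the reduced form of $w$ as $w = v \cdot e_j^a$, where $e_j^a$ (with $a \not\equiv 0 \bmod m_j$) is the last syllable, so $\ell(v) = \ell(w) - 1$; by the inductive hypothesis there are $\gamma' \in \langle S_1 \rangle$ and integers $b_1, \ldots, b_k$ with $\gamma' v = e_k^{b_k}\cdots e_1^{b_1}$. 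Putting $p := e_k^{b_k}\cdots e_{j+1}^{b_{j+1}}$ and $w' := e_{j-1}^{b_{j-1}}\cdots e_1^{b_1}$, we have $\gamma' w = p\, e_j^{b_j}\, w'\, e_j^a$, and the idea is to slide $e_j^a$ leftward past $w'$ until it merges with $e_j^{b_j}$. An easy commutator computation gives
\[
  p\, e_j^{b_j}\, w'\, e_j^a \;=\; \big({}^{(pe_j^{b_j})}[w', e_j^a]\big) \cdot p\, e_j^{b_j + a}\, w',
\]
whose second factor already has the required shape $e_k^{i_k}\cdots e_1^{i_1}$ (with $i_j = b_j + a$ and $i_s = b_s$ for $s \neq j$). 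So it suffices to show that the correction $\delta := {}^{(pe_j^{b_j})}[w', e_j^a]$ lies in $\langle S_1 \rangle$; then $\gamma := \delta^{-1}\gamma' \in \langle S_1 \rangle$ completes the step.

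Verifying $\delta \in \langle S_1 \rangle$ is the crux of the argument, and it is precisely here that the conjugated generators ${}^{g_\xi}f_\lambda$ of $S_1$ are needed. Writing $B_c := [e_j^c, w']$ for $c \in \ZZ$, a short computation in $G_1$ yields ${}^{e_j^{b_j}}[w', e_j^a] = B_{b_j}\, B_{b_j + a}^{-1}$, hence $\delta = ({}^{p}B_{b_j})({}^{p}B_{b_j + a})^{-1}$, and it is enough to prove that ${}^{p}B_c \in \langle S_1 \rangle$ for every integer $c$. If $w' = 1$ or $c \equiv 0 \bmod m_j$ then $B_c = 1$; otherwise, reducing exponents modulo the $m_s$, choose $u_s \in [m_s]$ with $u_s \equiv b_s \bmod m_s$ for $s \neq j$ and $u_j \in [m_j - 1]$ with $u_j \equiv c \bmod m_j$. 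Since $w' \neq 1$ we have $(u_1, \ldots, u_{j-1}) \neq (m_1, \ldots, m_{j-1})$, so $\lambda := (u_1, \ldots, u_{j-1}, u_j) \in \Lambda_j$ and $\xi := (u_{j+1}, \ldots, u_k) \in \Xi_j$; as $B_c = f_\lambda$ and $p = g_\xi$ in $G$, we get ${}^{p}B_c = {}^{g_\xi}f_\lambda \in S_1$. The only remaining care is with the degenerate cases $j = 1$ and $j = k$, where $w'$ (resp.\ $p$) is the empty product and hence $B_c = 1$ (resp.\ $g_\xi = 1$); the argument is unaffected.

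One could alternatively show first that $\langle S_1 \rangle$ equals the commutator subgroup $[G_1, G_1]$, by applying Reidemeister--Schreier to $G_1$ with the Schreier transversal $\{\, e_k^{i_k}\cdots e_1^{i_1} : i_s \in [m_s]\,\}$; the lemma then follows immediately, since that transversal meets every coset of $[G_1, G_1]$. I would nonetheless present the direct induction above, since it does not require pinning down $\langle S_1 \rangle$ exactly.
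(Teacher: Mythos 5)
Your argument is correct and is essentially the paper's proof: both induct on the length of $w$, strip off the final $e_j$-power, and absorb the resulting correction term into $\langle S_1 \rangle$ via a conjugated-commutator identity. The only differences are cosmetic --- you induct on syllable length where the paper inducts on letter length, and your factorization $\delta = \bigl({}^{p}B_{b_j}\bigr)\bigl({}^{p}B_{b_j+a}\bigr)^{-1}$ is the same computation as the paper's use of the identity ${}^{cd}[a,b] = {}^{c}[da,b]\cdot {}^{c}[b,d]$.
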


\begin{proof}
  Let $\| w \|$ denote the word length of $w$ with respect to $\{ e_1, \ldots, e_k \}$.
  The statement is trivial if $\| w \| = 0$.
  Let $l \geq 1$ and assume that the statement holds for elements of $G_1$ whose word lengths are equal to $l-1$.
  Let $w$ be an element of $G_1$ with $\| w \| = l$.
  Let $e_t^{\pm 1}$ be the suffix of $w$ in the reduced word expression and set $w = ve_t^{\pm 1}$.
  Then $\| v \| = l-1$.
  By the above assumption, there exist $\gamma \in \langle S_1 \rangle$ and $i_1, \ldots, i_k$ such that $\gamma v = e_k^{i_k} \cdots e_1^{i_1}$.
  Hence we have
  \[
    \gamma w = e_k^{i_k} \cdots e_1^{i_1} \cdot e_t^{\pm 1}.
  \]

  If $t = 1$, the statement holds for $w$.
  Assume that $t \neq 1$.
  We set $\gamma' = {}^{e_k^{i_k} \cdots e_l^{i_l}} [e_l^{\pm 1}, e_{l-1}^{i_{l-1}}\cdots e_1^{i_1}]$.
  By the equality ${}^{cd}[a,b] = {}^{c}[da,b]\cdot {}^{c}[b,d]$, we have
  \begin{align*}
    \gamma' &= {}^{e_k^{i_k} \cdots e_{l+1}^{i_{l+1}} \cdot e_l^{i_l}} [e_l^{\pm 1}, e_{l-1}^{i_{l-1}}\cdots e_1^{i_1}] \\
    &= {}^{e_k^{i_k} \cdots e_{l+1}^{i_{l+1}}} [e_{l}^{i_l \pm 1}, e_{l-1}^{i_{l-1}}\cdots e_1^{i_1}] \cdot {}^{e_k^{i_k} \cdots e_{l+1}^{i_{l+1}}}[e_{l-1}^{i_{l-1}}\cdots e_1^{i_1}, e_l^{i_l}] \\
    &= {}^{e_k^{i_k} \cdots e_{l+1}^{i_{l+1}}} [e_{l}^{i_l \pm 1}, e_{l-1}^{i_{l-1}}\cdots e_1^{i_1}] \cdot \left({}^{e_k^{i_k} \cdots e_{l+1}^{i_{l+1}}}[e_l^{i_l}, e_{l-1}^{i_{l-1}}\cdots e_1^{i_1}]\right)^{-1}.
  \end{align*}
  Hence $\gamma'$ is an element of $\langle S_1 \rangle$.
  Moreover, we have
  \begin{align*}
    \gamma' \gamma w &= {}^{e_k^{i_k} \cdots e_l^{i_l}} [e_l^{\pm 1}, e_{l-1}^{i_{l-1}}\cdots e_1^{i_1}] \cdot e_k^{i_k} \cdots e_1^{i_1} \cdot e_t^{\pm 1} \\
    &= e_k^{i_k} \cdots e_{l+1}^{i_{l+1}} \cdot e_l^{i_l \pm 1} \cdot e_{l-1}^{i_{l-1}} \cdots e_1^{i_1}.
  \end{align*}
  This completes the proof.
\end{proof}

\begin{lemma}\label{lem:sort_2}
  For an element $g$ of $G$, there exist an element $f$ of $F$ and $i_1, \ldots, i_k$ such that $fg = e_k^{i_k}\cdots e_1^{i_1}$.
\end{lemma}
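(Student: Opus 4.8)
The plan is to reduce the general case to the case handled in Lemma \ref{lem:sort_1} by peeling off the "free" generators $h_l$ one at a time. Write $g$ in reduced form as a word in $\{e_1, \ldots, e_k, h_1, \ldots, h_{2n}\}$ (together with their inverses), and induct on the number of occurrences of the letters $h_l^{\pm 1}$ in this reduced expression. If there are no such occurrences, then $g \in G_1$, and Lemma \ref{lem:sort_1} gives $\gamma \in \langle S_1\rangle \subset F$ with $\gamma g = e_k^{i_k}\cdots e_1^{i_1}$, as desired.

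For the inductive step, suppose $g$ contains at least one letter $h_l^{\pm 1}$, and write $g = w h_l^{\pm 1} v$ where $w \in G_1$ and $v$ is the (possibly empty) suffix after the \emph{last} occurrence of an $h$-letter, so that $v$ has strictly fewer $h$-occurrences than $g$. By Lemma \ref{lem:sort_1} applied to $w$, choose $\gamma \in \langle S_1\rangle$ and $j_1, \ldots, j_k$ with $\gamma w = e_k^{j_k}\cdots e_1^{j_1}$; we may moreover assume each $j_i \in [m_i]$ by absorbing the relations $e_i^{m_i}=1$ (this only changes $\gamma$ within $\langle S_1\rangle$, since $e_i^{m_i}$ itself lies in $G_1$ and can be cleared via Lemma \ref{lem:sort_1}, or one simply normalizes the exponents directly). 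Then
\[
  \gamma g = e_k^{j_k}\cdots e_1^{j_1}\, h_l^{\pm 1}\, v.
\]
Now set $f' = \big({}^{e_k^{j_k}\cdots e_1^{j_1}} h_l\big)^{\mp 1}$, which is an element of $\langle S_2\rangle \subset F$ by the definition of $S_2$ (with the index $(j_1,\ldots,j_k) \in [m_1]\times\cdots\times[m_k]$). Then
\[
  f' \gamma g = e_k^{j_k}\cdots e_1^{j_1}\, v,
\]
and the right-hand side has strictly fewer $h$-occurrences than $g$ (the single $h_l^{\pm 1}$ has been removed and $e_k^{j_k}\cdots e_1^{j_1}$ contributes none). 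By the inductive hypothesis there exist $f'' \in F$ and $i_1, \ldots, i_k$ with $f''(f'\gamma g) = e_k^{i_k}\cdots e_1^{i_1}$, and taking $f = f''f'\gamma \in F$ completes the step.

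The only real subtlety — and the step I would be most careful about — is the bookkeeping on the exponents $j_i$: one must ensure they lie in the range $[m_i]$ so that the conjugator ${}^{e_k^{j_k}\cdots e_1^{j_1}} h_l$ is literally an element of $S_2$ as defined, rather than merely conjugate to one. This is where the hypothesis that the $e_i$ have finite order $m_i$ is used, and it should be folded cleanly into the invocation of Lemma \ref{lem:sort_1} (or, if one prefers, proved as a short preliminary remark that every element of $G_1$ can be brought to the normal form $e_k^{i_k}\cdots e_1^{i_1}$ with $i_t \in [m_t]$ by an element of $\langle S_1\rangle$). Everything else is a routine induction on word length in the $h$-letters.
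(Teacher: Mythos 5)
Your proof is correct and follows essentially the same route as the paper: peel off the $h$-letters one at a time by first normalizing the $G_1$-prefix via Lemma \ref{lem:sort_1} and then cancelling with a conjugated generator from $S_2$. One small wording slip: since you require $w \in G_1$, the letter $h_l^{\pm 1}$ you split off must be the \emph{first} $h$-occurrence in $g$, not the last; with that correction the induction goes through exactly as in the paper.
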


\begin{proof}
  If $g$ is contained in $G_1$, then the statement holds by Lemma \ref{lem:sort_1}.
  Assume that $g$ is not contained in $G_1$.
  Then there exists an element of $\{ h_1^{\pm 1}, \ldots, h_{2n}^{\pm 1} \}$ in the reduced word expression of $g$.
  Then there exist $w \in G_1$, $i \in \{ 1, \ldots, 2n \}$ and $g' \in G$ whose prefix in the reduced word expression is not equal to $h_i^{\mp 1}$ such that
  \[
    g = w \cdot h_i^{\pm 1} \cdot g'.
  \]
  Then, by Lemma \ref{lem:sort_1}, there exists $\gamma_1 \in \langle S_1 \rangle$ and $i_1, \ldots, i_k$ such that
  \[
    \gamma_1 g = e_{k}^{i_k} \cdots e_1^{i_1} \cdot h_i^{\pm 1} \cdot g'.
  \]
  We set $\gamma_2 = {}^{e_{k}^{i_k} \cdots e_1^{i_1}} h_i^{\mp 1}$, which is an element of $\langle S_2 \rangle$.
  Then we have
  \[
    \gamma_2 \gamma_1 g = g'.
  \]
  Note that $\gamma_2 \gamma_1$ is an element of $F$.
  This procedure strictly decreases the number of elements of $\{ h_1^{\pm 1}, \ldots, h_{2n}^{\pm 1} \}$ in the reduced word expression of $g$.
  Taking this procedure inductively, we obtain an element $\gamma \in F$ such that $\gamma g \in G_1$.
  Applying Lemma \ref{lem:sort_1} to $\gamma g$, we obtain the lemma.
\end{proof}

We set
\[
  Q = \bigcup_{i_k = 0}^{m_k-1}\cdots \bigcup_{i_1 = 0}^{m_1-1} e_k^{i_k} \cdots e_1^{i_1}(P).
\]
Then $Q\cap \HH^2$ turns out to be a fundamental domain of $F$ by the following two lemmas.

\begin{lemma}
  For every $f \in F\setminus \{ 1 \}$, the interior $\Int(Q)$ is disjoint from $f(\Int(Q))$.
\end{lemma}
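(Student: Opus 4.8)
The plan is to combine two facts: that $P$ is an honest fundamental domain for $\rho(G)$ acting on $\HH^2$, so that distinct $G$-translates of $\Int(P)$ are disjoint, and that the finitely many translating elements $r=e_k^{i_k}\cdots e_1^{i_1}$ (with $0\le i_l\le m_l-1$) occurring in the definition of $Q$ lie in pairwise distinct right cosets of $F$; equivalently, $(RR^{-1})\cap F=\{1\}$, where $R$ denotes the set of these elements. The second point is the heart of the matter, so first I would isolate it by introducing the homomorphism $\pi\colon G\to\bigoplus_{l=1}^{k}\ZZ_{m_l}$ that sends each $e_l$ to the standard generator of the $l$-th summand and each $h_j$ to $0$; it is well defined because the defining relations $e_l^{m_l}=1$ are respected, and the target is abelian.

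Next I would check that $F\subseteq\Ker\pi$. Since the target is abelian and $\pi(h_j)=0$ for all $j$, every generator in $S_1$ (a conjugate of a commutator among the $e_l$'s) and every generator in $S_2$ (a conjugate of some $h_l$) is killed by $\pi$, so all of $F=\langle S_1\cup S_2\rangle$ is. On the other hand $\pi(e_k^{i_k}\cdots e_1^{i_1})=(i_1,\dots,i_k)$, so $\pi$ is injective on $R$. Consequently, for $r,r'\in R$ with $r(r')^{-1}\in F$ one gets $\pi(r)=\pi(r')$, hence $r=r'$ and $r(r')^{-1}=1$; that is, $(RR^{-1})\cap F=\{1\}$.

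It then remains to reduce an overlap $\Int(Q)\cap f(\Int(Q))\neq\emptyset$ (for $f\in F$) to an overlap of individual tile interiors. Suppose such an $f$ exists. The set $\Int(Q)\cap f(\Int(Q))$ is open and nonempty, hence meets $\HH^2$ since $L_{\rho}\subseteq\partial\HH^2$ is nowhere dense in $\overline{\HH^2}$. Because the $G$-translates of $P$ tessellate $\overline{\HH^2}\setminus L_{\rho}$ — in particular they cover $\HH^2$, are locally finite, and have pairwise disjoint interiors — the union $Z$ of all boundary sets $\partial(gP)$ ($g\in G$) is closed with empty interior in $\HH^2$, so I may choose a point $x$ in $\Int(Q)\cap f(\Int(Q))\cap(\HH^2\setminus Z)$. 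Since $x\in Q=\bigcup_{r\in R}r(P)$ and $x\notin Z$, we have $x\in\Int(r(P))$ for some $r\in R$; since $x\in f(Q)=\bigcup_{r'\in R}fr'(P)$ and $x\notin Z$, we have $x\in\Int(fr'(P))$ for some $r'\in R$. As $P$ is a fundamental domain of $\rho(G)$, the relation $\Int(r(P))\cap\Int(fr'(P))\neq\emptyset$ forces $r=fr'$, hence $f=r(r')^{-1}\in(RR^{-1})\cap F=\{1\}$, contradicting $f\neq 1$.

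The genuinely delicate step is this last topological reduction: $\Int(Q)$ is in general strictly larger than $\bigcup_{r\in R}\Int(r(P))$, because points on shared walls of adjacent tiles become interior to $Q$, so one cannot directly assert that an overlap of the enlarged interiors comes from an overlap of two individual tiles. The way around it is to argue at points lying outside the locally finite wall set $Z$, where $Q$ and $f(Q)$ genuinely are finite unions of closed tiles whose interiors can be compared. Everything else — the well-definedness of $\pi$, the computation $\pi(e_k^{i_k}\cdots e_1^{i_1})=(i_1,\dots,i_k)$, and the fundamental-domain property of $P$ — is routine, and I do not expect Lemma~\ref{lem:sort_2} to be needed for this particular statement.
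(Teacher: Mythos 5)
Your proof is correct and follows essentially the same route as the paper: reduce an overlap of $\Int(Q)$ and $f(\Int(Q))$ to an equality of individual $G$-translates of $P$ via the fundamental-domain property, then kill $f$ using an abelian quotient in which $F$ dies but the representatives $e_k^{i_k}\cdots e_1^{i_1}$ stay distinct (the paper uses the full abelianization $\HHH_1(G)\cong\ZZ^{2n}\times\ZZ_{m_1}\times\cdots\times\ZZ_{m_k}$, you use its torsion quotient -- the same argument). Your extra care in the topological reduction, choosing a point off the wall set so that the overlap genuinely comes from two tile interiors, addresses a step the paper passes over silently, and is a worthwhile refinement rather than a divergence.
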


\begin{proof}
  Assume that $\Int(Q)\cap f(\Int(Q))$ is non-empty.
  Since $P\cap \HH^2$ is a fundamental domain of $\rho(G)$, this implies that there exist $i_1, \cdots, i_k, j_1, \cdots, j_k$ such that
  \[
    e_k^{i_k} \cdots e_1^{i_1}(P) = fe_k^{j_k}\cdots e_1^{i_1}(P),
  \]
  and hence $e_k^{i_k} \cdots e_1^{i_1} = fe_k^{j_k}\cdots e_1^{i_1}$.
  Let $q \colon G \to \HHH_1(G)$ be the abelianization of $G$.
  Note that the abelianization $\HHH_1(G)$ is isomorphic to $\ZZ^{2n} \times \ZZ_{m_1} \times \cdots \times \ZZ_{m_k}$.
  The elements of $S_1$ are contained in the kernel of $q$ and the image $q(S_2)$ is contained in the subspace $\ZZ^{2n} \times \{ 0 \} \times \cdots \times \{ 0 \} \subset \HHH_1(G)$.
  Hence $q(e_k^{i_k} \cdots e_1^{i_1}) = q(fe_k^{j_k}\cdots e_1^{i_1})$ implies that $i_l = j_l$ modulo $m_l$ for every $1 \leq l \leq k$.
  Therefore we obtain $f = 1$.
\end{proof}

\begin{lemma}
  The equality $\displaystyle \bigcup_{f \in F} f(Q \cap \HH^2) = \HH^2$ holds.
\end{lemma}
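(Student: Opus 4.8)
The plan is to deduce this from the fact that $P\cap\HH^2$ is a fundamental domain for $\rho(G)$ together with the sorting result Lemma \ref{lem:sort_2}. First I would record the elementary observation that, since each $e_l$ acts on $\HH^2$ preserving it, intersecting the defining union of $Q$ with $\HH^2$ gives
\[
  Q\cap\HH^2 = \bigcup_{i_k = 0}^{m_k-1}\cdots \bigcup_{i_1 = 0}^{m_1-1} e_k^{i_k} \cdots e_1^{i_1}(P\cap\HH^2),
\]
so that each $e_k^{i_k}\cdots e_1^{i_1}(P\cap\HH^2)$ with $0\le i_l\le m_l-1$ is one of the pieces making up $Q\cap\HH^2$. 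The inclusion $\bigcup_{f\in F} f(Q\cap\HH^2)\subset\HH^2$ is immediate because each $f\in F$ preserves $\HH^2$ and $Q\cap\HH^2\subset\HH^2$, so only the reverse inclusion needs an argument.

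For the reverse inclusion, take an arbitrary point $x\in\HH^2$. Since $P\cap\HH^2$ is a fundamental domain for $\rho(G)$, there is some $g\in G$ with $x\in g(P\cap\HH^2)$. Applying Lemma \ref{lem:sort_2} to $g$, we obtain $f\in F$ and integers $j_1,\ldots,j_k$ with $fg = e_k^{j_k}\cdots e_1^{j_1}$; using the relations $e_l^{m_l}=1$ we replace each $e_l^{j_l}$ by $e_l^{j_l \bmod m_l}$, so we may assume $0\le j_l\le m_l-1$ for every $l$. Then $g = f^{-1}e_k^{j_k}\cdots e_1^{j_1}$, whence
\[
  x \in g(P\cap\HH^2) = f^{-1}\bigl(e_k^{j_k}\cdots e_1^{j_1}(P\cap\HH^2)\bigr) \subset f^{-1}(Q\cap\HH^2).
\]
Since $F$ is a subgroup of $G$, we have $f^{-1}\in F$, so $x\in\bigcup_{f\in F} f(Q\cap\HH^2)$, which completes the proof of the equality.

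The only point that requires a little care — and the one I expect to be the main (though minor) obstacle — is justifying that the exponents furnished by Lemma \ref{lem:sort_2} may indeed be reduced into the ranges $0\le j_l\le m_l-1$ that appear in the definition of $Q$; this is immediate from $e_l^{m_l}=1$ applied to each factor separately (no reordering of the $e_l$'s is needed), but it must be invoked explicitly. Everything else is formal. Combined with the previous lemma (disjointness of the interiors), this establishes that $Q\cap\HH^2$ is a fundamental domain for $F$, and in particular that $F$ has finite index in $G$.
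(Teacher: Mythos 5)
Your proof is correct and takes essentially the same route as the paper's: both reduce the claim to showing $g(P\cap\HH^2)\subset\bigcup_{f\in F}f(Q\cap\HH^2)$ for every $g\in G$ by invoking Lemma \ref{lem:sort_2} to write $fg=e_k^{i_k}\cdots e_1^{i_1}$. Your explicit remark that the exponents must be reduced modulo $m_l$ to match the ranges in the definition of $Q$ is a small point the paper leaves implicit, but it does not change the argument.
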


\begin{proof}
  Since $P\cap \HH^2$ is a fundamental domain of $\rho(G)$, it is enough to prove that $g(P) \subset \bigcup_{f \in F} f(Q)$ for every $g \in G$.
  By Lemma \ref{lem:sort_2}, we take an element $f$ of $F$ such that $fg$ is of the form $e_k^{i_k}\cdots e_1^{i_1}$.
  Then we have $fg(P) = e_k^{i_k} \cdots e_1^{i_1}(P) \subset Q$, which implies $g(P) \subset \bigcup_{f \in F} f(Q)$.
\end{proof}

According to \cite{MR3887426}, we define the notion of ping-pong dynamics as follows.
\begin{definition}[{\cite[Definition 4.1]{MR3887426}}]
  Let $a_1, a_2, \ldots, a_n \in \Homeo_+(S^1)$.
  We say these elements have \emph{ping-pong dynamics} if there exist pairwise disjoint closed sets $D(a_i)$ and $D(a_i^{-1})$ such that, for each $i$, we have
  \[
    a_i (S^1\setminus D(a_i^{-1})) \subset D(a_i).
  \]
  We call such sets $D(a_i)$ and $D(a_i^{-1})$ satisfying $a_i (S^1 \setminus D(a_i^{-1})) \subset D(a_i)$ \emph{attracting domains} for $a_i$ and $a_i^{-1}$, respectively, and use the notation $D(a_i^{\pm 1})$ to denote $D(a_i) \cup D(a_i^{-1})$.
  These attracting domains need not be connected.
  In this case, we use the notation $D_1(s), D_2(s), \ldots$ for the connected components of $D(s)$. Note that the definition of ping-pong dynamics implies that for each $s \in \{ a_1^{\pm 1}, \ldots , a_n^{\pm 1}\}$, and for each domain $D_k(t)$ with $t \neq s^{-1}$, there exists a unique $j$ such that $s(D_k(t))$ lies in the interior of $D_j(s)$.
\end{definition}

In the following, we will prove that the elements of $S_1 \cup S_2$ have ping-pong dynamics.
For $\lambda = (u_1, \ldots, u_t) \in \Lambda_t$, we set
\begin{align}\label{align:J_lambda}
  J_{\lambda}^{-} &= [e_{t-1}^{u_{t-1}} \cdots e_1^{u_1}e_t^{u_t-1}(x_{e_t}), e_{t-1}^{u_{t-1}} \cdots e_1^{u_1}e_t^{u_t}(x_{e_t})], \nonumber \\
  J_{\lambda}^+ &= [e_t^{u_t}e_{t-1}^{u_{t-1}} \cdots e_1^{u_1}(x_{e_t}), e_t^{u_t}e_{t-1}^{u_{t-1}} \cdots e_1^{u_1}e_t^{-1}(x_{e_t})].
\end{align}
For every $l \in [2n]$, we set
\[
  J_{h_l}^- = [x_l^-, x_l^+], \  J_{h_l}^+ = [h_l(x_l^+), h_l(x_l^-)].
\]
The relationship between these intervals is as follows.

\begin{lemma}\label{lem:intersections}
  Let $2 \leq t, t' \leq k, \lambda \in \Lambda_t, \xi \in \Xi_t, \lambda' \in \Lambda_{t'}$ and $\xi' \in \Xi_{t'}$.
  Let $l, l' \in [2n]$ and $(i_1, \cdots i_k), (i'_1, \cdots, i'_k) \in [m_1]\times \cdots \times [m_k]$.
  Then the following hold;
  \begin{enumerate}
    \item $g_{\xi}(J_{\lambda}^+) \cap g_{\xi'}(J_{\lambda'}^{+}) \neq \emptyset$ if and only if $\lambda = \lambda'$ and $\xi = \xi'$,
    \item $\Int(g_{\xi}(J_{\lambda}^-)) \cap \Int(g_{\xi'}(J_{\lambda'}^-)) \neq \emptyset$ if and only if $\lambda = \lambda'$ and $\xi = \xi'$, 
    \item $g_{\xi}(J_{\lambda}^+) \cap g_{\xi'}(J_{\lambda'}^{-}) = \emptyset$,
    \item $g_{\xi}(J_{\lambda}^{+}) \cap e_k^{i_k} \cdots e_1^{i_1}(J_{h_l}^{\pm}) = \emptyset$,
    \item $g_{\xi}(J_{\lambda}^{-}) \cap e_k^{i_k} \cdots e_1^{i_1}(J_{h_l}^{\pm}) = \emptyset$,
    \item $e_k^{i_k} \cdots e_1^{i_1}(J_{h_l}^{+}) \cap e_k^{i'_k} \cdots e_1^{i'_1}(J_{h_{l'}}^{-}) = \emptyset$, and
    \item $e_k^{i_k} \cdots e_1^{i_1}(J_{h_l}^{\pm}) \cap e_k^{i'_k} \cdots e_1^{i'_1}(J_{h_{l'}}^{\pm}) = \emptyset$ if and only if $(i_1, \ldots, i_k) = (i'_1, \ldots, i'_k)$ and $l = l'$.
  \end{enumerate}
\end{lemma}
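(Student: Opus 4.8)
The plan is to reduce every assertion to the tessellation of $\overline{\HH^2}\setminus L_{\rho}$ by the translates $\{w(P):w\in G\}$ of the polygon $P$, the same mechanism that drove Lemmas \ref{lem:transitive_P} and \ref{lem:transitive_I_o}. Two structural facts should carry the argument. First, distinct translates $w(P)$ have disjoint interiors, and a $\partial\HH^2$-face of one tile cannot coincide with a $\partial\HH^2$-face of a different tile (two tiles sharing such a boundary arc would have overlapping interiors near it); hence an equality $w(J)=w'(J')$ between $\partial\HH^2$-faces of the tessellation, with $J,J'\in\{J_1,\dots,J_k,K_1^{\pm},\dots,K_{2n}^{\pm}\}$, forces $w=w'$ and $J=J'$, and any two distinct such faces meet at most at a shared endpoint. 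Second, every arc occurring in the statement has both endpoints in $\partial\HH^2\setminus L_{\rho}$ — they are $G$-translates of the vertices $x_{e_t}$ or $x_l^{\pm}$, each of which lies in a gap of $L_{\rho}$ — so each such arc is the closure of a union of $\partial\HH^2$-faces of the tessellation. Combining these, for two such arcs $A,B$ one has $\Int(A)\cap\Int(B)\neq\emptyset$ precisely when $A$ and $B$ contain a common $\partial\HH^2$-face, and $A\cap B=\emptyset$ precisely when in addition no endpoint of one of them lies in the other.

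The heart of the argument is then a single bookkeeping lemma describing, for each of the arcs $g_{\xi}(J_{\lambda}^{\pm})$ and $e_k^{i_k}\cdots e_1^{i_1}(J_{h_l}^{\pm})$, the exact set of $\partial\HH^2$-faces $w(J)$ whose closure it is. I would read this off from Figures \ref{fig:embedding}, \ref{fig:transition} and \ref{fig:interval}, together with the adjacency relations $\partial_+(\,\cdot\,)=\partial_-(\,\cdot\,)$ established inside the proof of Lemma \ref{lem:transitive_P}: the base arcs $J_{h_l}^{\pm}$ are the two chains of faces accumulating on the two fixed points of $h_l$, the base arcs $J_{\lambda}^{\pm}$ are the chains built from the elliptic generators occurring in $f_{\lambda}=[e_t^{u_t},e_{t-1}^{u_{t-1}}\cdots e_1^{u_1}]$, and applying $g_{\xi}$ or $e_k^{i_k}\cdots e_1^{i_1}$ merely left-multiplies all the indexing words $w$. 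Stating this decomposition once, uniformly, avoids redoing the geometry in each of the seven cases.

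With the decompositions in hand, each of (i)--(vii) becomes a comparison of two finitely parametrized families of $\partial\HH^2$-faces, which by the first structural fact amounts to deciding when two explicit words in the generators are equal in $G$ (a legitimate question since $\rho$ is injective by Lemma \ref{lem:inclusion}). Here Lemmas \ref{lem:sort_1} and \ref{lem:sort_2} are used to rewrite the relevant words in the normal form $f\cdot e_k^{i_k}\cdots e_1^{i_1}$ with $f\in F$, and — exactly as in the proof that $Q\cap\HH^2$ is a fundamental domain for $F$ — passing to the abelianization $\HHH_1(G)\cong\ZZ^{2n}\times\ZZ_{m_1}\times\cdots\times\ZZ_{m_k}$ pins down the exponents $i_l$ modulo $m_l$, after which the residual free-product bookkeeping settles each case; the "if" directions of the biconditionals are immediate, so the content is the "only if" direction. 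The one genuine obstacle I foresee is organizational — keeping the index families ($t,t',\lambda,\xi,\lambda',\xi',l,l'$ and the $i_{\bullet},i'_{\bullet}$) under control across all seven statements — and I would contain it by isolating the uniform face-decomposition lemma first and then letting the assertions fall out of one comparison of face-collections.
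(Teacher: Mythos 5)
Your overall frame --- reduce everything to the tessellation of $\overline{\HH^2}\setminus L_{\rho}$ and compare the sets of $\partial\HH^2$-faces contained in each arc --- is sound as far as it goes: the arcs in the lemma do have vertex endpoints, distinct faces do have disjoint interiors, and the intersection criteria you state for closures of unions of faces are correct. The gap is that the ``single bookkeeping lemma'' on which everything rests is exactly where the content of Lemma \ref{lem:intersections} lives, and you give no argument for it. Each arc $J_{\lambda}^{\pm}$, $J_{h_l}^{\pm}$ contains points of $L_{\rho}$ in its interior and is therefore the closure of an \emph{infinite} union of faces $w(J)$, with $w$ ranging over a set of group elements that cannot simply be ``read off the figures'': characterizing which tiles $w(P)$ have ideal boundary inside, say, $[x_l^-,x_l^+]$ is precisely the ping-pong/nesting statement that the lemma encodes. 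Consequently your reduction to ``deciding when two explicit words are equal in $G$'' does not materialize: the words are neither explicit nor finite in number. The proposed algebraic tools also do not reach the problem. Writing an indexing word as $w=f\cdot e_k^{a_k}\cdots e_1^{a_1}$ with $f\in F$ (Lemmas \ref{lem:sort_1} and \ref{lem:sort_2}) and abelianizing yields congruences mixing the known exponents $u_j$ of $g_{\xi}$ with the unknown coset data $a_j$ of $w$; since the $a_j$ vary over the uncharacterized infinite family of faces, nothing gets pinned down. (The abelianization trick works for the fundamental-domain argument because there the elements being compared are exactly of the form $f\cdot e_k^{i_k}\cdots e_1^{i_1}$ with bounded $i_j$, which is not the situation here.)

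For contrast, the paper avoids enumerating faces altogether: it writes $g_{\xi}(J_{\lambda}^{+})=e_k^{u_k}\cdots e_1^{u_1}\big([x_{e_t},e_t^{-1}(x_{e_t})]\big)$ explicitly, observes the nested containment $g_{\xi}(J_{\lambda}^{+})\subset[e_k^{u_{k}-1}(x_{e_k}),e_k^{u_{k}}(x_{e_k})]$ coming from the configuration of Figure \ref{fig:embedding}, deduces $u_k=u'_k$ after ruling out the degenerate case where the two arcs meet only at the endpoint $e_k^{u_k}(x_{e_k})$ (which contradicts $\lambda\in\Lambda_k$), and then peels off generators one index at a time by induction. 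If you want to keep the tessellation framework, you would have to prove your face-decomposition lemma by essentially this same nested-interval induction, at which point the detour through face sets buys nothing; alternatively, drop the face decomposition and compare the arcs directly through the positions of their explicit endpoints, which is what the paper's containment argument amounts to. Note also that the endpoint coincidences you defer to ``no endpoint of one lies in the other'' are not automatic: they are exactly the cases the paper must exclude by hand using the constraints $\lambda\in\Lambda_t$, $\lambda'\in\Lambda_{t'}$.
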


\begin{proof}
  We set $(\lambda, \xi) = (u_1, \ldots, u_t, u_{t+1}, \ldots, u_k) \in [m_1] \times \cdots \times [m_k]$.
  Then we have
  \begin{align*}
    g_{\xi}(J_{\lambda}^+) &= e_k^{u_k} \cdots e_1^{u_1}\big([x_{e_t}, e_t^{-1}(x_{e_t})]\big), \\
    g_{\xi}(J_{\lambda}^-) &= e_k^{u_k} \cdots e_{t+1}^{u_{t+1}} e_{t-1}^{u_{t-1}} \cdots e_1^{u_1}\big([e_t^{u_t -1}(x_{e_t}), e_t^{u_t}(x_{e_t})]\big).
  \end{align*}
  We set $(\lambda', \xi') = (u'_1, \ldots, u'_{t'}, u'_{t'+1}, \ldots, u'_k) \in [m_1] \times \cdots \times [m_k]$.

  We first prove i).
  Assume for contradiction that $g_{\xi}(J_{\lambda}^+) \cap g_{\xi'}(J_{\lambda'}^{+}) \neq \emptyset$.
  Note that
  \begin{align*}
    g_{\xi}(J_{\lambda}^{+}) &\subset [e_k^{u_{k}-1}(x_{e_k}), e_k^{u_{k}}(x_{e_k})], \\
    g_{\xi'}(J_{\lambda'}^{+}) & \subset [e_k^{u'_{k}-1}(x_{e_k}), e_k^{u'_{k}}(x_{e_k})].
  \end{align*}
  If $u_k \neq u'_{k}$, then either of the following holds:
  \begin{enumerate}[$(1)$]
    \item $g_{\xi}(J_{\lambda}^+) \cap g_{\xi'}(J_{\lambda'}^{+}) = \{ e_k^{u_k}(x_{e_k}) \}$,
    \item $g_{\xi}(J_{\lambda}^+) \cap g_{\xi'}(J_{\lambda'}^{+}) = \{ e_k^{u'_k}(x_{e_k}) \}$.
  \end{enumerate}
  If (1) holds, then $e_k^{u_k}(x_{e_k}) \in g_{\xi}(J_{\lambda}^+)$, and hence $x_{e_k} \in e_{k-1}^{u_{k-1}} \cdots e_1^{u_1}\big( [x_{e_t}, e_{t}^{-1}(x_{e_t})] \big)$.
  This implies that $u_j = m_j$ for every $j \in [k-1]$ and $t = k$.
  This contradicts $\lambda \in \Lambda_k$.
  In the same way, (2) is also contradicts to $\lambda' \in \Lambda_k$.
  Hence we obtain $u_k = u'_k$.
  Taking this procedure inductively, we obtain $u_j = u'_j$ for every $1 \leq j \leq k$.
  Hence we have $[x_{e_t}, e_t^{-1}(x_{e_t})] \cap [x_{e_{t'}}, e_{t'}^{-1}(x_{e_{t'}})] \neq \emptyset$, which implies $t = t'$.
  This completes the proof of i).

  For ii), we assume that $\Int(g_{\xi}(J_{\lambda}^-)) \cap \Int(g_{\xi'}(J_{\lambda'}^-)) \neq \emptyset$.
  Since
  \begin{align*}
    &\Int(g_{\xi}(J_{\lambda}^{-})) \subset (e_k^{u_{k}-1}(x_{e_k}), e_k^{u_{k}}(x_{e_k})) \text{ and} \\
    &\Int(g_{\xi'}(J_{\lambda'}^{-})) \subset (e_k^{u'_{k}-1}(x_{e_k}), e_k^{u'_{k}}(x_{e_k})),
  \end{align*}
  we have $u_k = u'_{k'}$.
  Taking this procedure inductively, we have $u_j = u'_j$ for $1 \leq j \leq k$ except for $j = t, t'$, and we obtain $u_t = m_t$ and $u'_{t'} = m_{t'}$.
  This implies that
  \[
    (e_t^{u_t -1}(x_{e_t}), e_t^{u_t}(x_{e_t})) \cap (e_{t'}^{u'_{t'} -1}(x_{e_{t'}}), e_{t'}^{u'_{t'}}(x_{e_{t'}})) \neq \emptyset.
  \]
  Hence we obtain $t = t'$.
  This completes the proof of ii).

  For iii), we assume that $g_{\xi}(J_{\lambda}^+) \cap g_{\xi'}(J_{\lambda'}^{-}) \neq \emptyset$.
  By the arguments same as above, we have $u_j = u'_{j}$ for every $1 \leq j \leq k$ except for $j = t'$ and $u_{t'} = m_{t'}$.
  However, this contradicts to $\lambda' \in \Lambda_{t'}$.

  For iv), we assume that $g_{\xi}(J_{\lambda}^{+}) \cap e_k^{i_k} \cdots e_1^{i_1}(J_{h_l}^{\pm}) \neq \emptyset$.
  By the arguments similar to the above, we have $u_j = i_j$ for every $1 \leq j \leq k$ and $J_{\lambda^+} \cap J_{h_l}^{\pm} \neq \emptyset$, which is not the case.

  The proofs of v) and vi) are the same as one of iv).
  The proof of vii) is the same as one of i).
\end{proof}

\begin{lemma}\label{lem:intersections_2}
  Let $2 \leq t \leq k, \lambda \in \Lambda_t$ and $\xi \in \Xi_t$.
  Let $l \in [2n]$ and $(i_1, \cdots i_k), (j_1, \cdots, j_k) \in [m_1]\times \cdots \times [m_k]$.
  Then, the following hold;
  \begin{enumerate}
    \item $e_{k}^{j_k} \cdots e_1^{j_1} (x_{e_1})$ is not contained in $g_{\xi}(J_{\lambda}^{\pm})$,
    \item $e_{k}^{j_k} \cdots e_1^{j_1} (x_{e_1})$ is not contained in $e_k^{i_k} \cdots e_1^{i_1}(J_{h_{l}}^{\pm})$.
  \end{enumerate}
\end{lemma}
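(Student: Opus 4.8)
The plan is to prove Lemma~\ref{lem:intersections_2} by the coordinate-stripping method already used in the proof of Lemma~\ref{lem:intersections}, treating the single point $e_k^{j_k}\cdots e_1^{j_1}(x_{e_1})$ as a degenerate interval. First I would record the explicit forms, exactly as at the start of that proof: writing $(\lambda,\xi)=(u_1,\ldots,u_k)\in[m_1]\times\cdots\times[m_k]$,
\[
  g_{\xi}(J_{\lambda}^+)=e_k^{u_k}\cdots e_1^{u_1}\bigl([x_{e_t},e_t^{-1}(x_{e_t})]\bigr),\qquad
  g_{\xi}(J_{\lambda}^-)=e_k^{u_k}\cdots e_{t+1}^{u_{t+1}}e_{t-1}^{u_{t-1}}\cdots e_1^{u_1}\bigl([e_t^{u_t-1}(x_{e_t}),e_t^{u_t}(x_{e_t})]\bigr),
\]
and I would note that each of $g_{\xi}(J_{\lambda}^{\pm})$ and $e_k^{i_k}\cdots e_1^{i_1}(J_{h_l}^{\pm})$ lies in an arc of the form $[e_k^{a-1}(x_{e_k}),e_k^{a}(x_{e_k})]$, with $a$ read off from the top $e_k$-coordinate (the arc being the core arc of $e_k$, i.e.\ $a=m_k$, in the cases with no $e_k^{\pm1}$, as in Figure~\ref{fig:embedding}). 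Likewise the point $e_k^{j_k}\cdots e_1^{j_1}(x_{e_1})$ lies in the arc $[e_k^{j_k-1}(x_{e_k}),e_k^{j_k}(x_{e_k})]$, since $e_{k-1}^{j_{k-1}}\cdots e_1^{j_1}(x_{e_1})$, being the image of $x_{e_1}$ under a word not involving $e_k$, stays outside the core arc of $e_k$ by the ping-pong positioning of $\rho(e_1),\ldots,\rho(e_k)$ in Figure~\ref{fig:embedding}.

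For part~(i) I would argue by contradiction. If $e_k^{j_k}\cdots e_1^{j_1}(x_{e_1})\in g_{\xi}(J_{\lambda}^{\pm})$, comparing the nesting arcs forces $j_k=u_k$, the two endpoint-overflow alternatives being ruled out exactly as in the proof of Lemma~\ref{lem:intersections}(i): an overflow would force $u_j=m_j$ for all $j<k$ and $t=k$, contradicting $\lambda\in\Lambda_k$. Cancelling $e_k^{u_k}$ and iterating downward through $e_{k-1},e_{k-2},\ldots$ reduces the $J^+$ case to $x_{e_1}\in[x_{e_t},e_t^{-1}(x_{e_t})]$ and the $J^-$ case to a statement of the form $e_t^{j_t}e_{t-1}^{j_{t-1}}\cdots e_1^{j_1}(x_{e_1})\in e_{t-1}^{u_{t-1}}\cdots e_1^{u_1}\bigl([e_t^{u_t-1}(x_{e_t}),e_t^{u_t}(x_{e_t})]\bigr)$. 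Both are impossible: since $t\geq2$ and $x_{e_1}$ lies in the interior of $I_o$, inspection of Figure~\ref{fig:embedding} shows that $x_{e_1}$, and more generally any $\langle e_1,\ldots,e_{t-1}\rangle$-translate of it, avoids the $e_t$-arcs $[x_{e_t},e_t^{-1}(x_{e_t})]$ and $[e_t^{u_t-1}(x_{e_t}),e_t^{u_t}(x_{e_t})]$ with $1\le u_t\le m_t-1$, just as the facts of the type ``$J_{\lambda}^+\cap J_{h_l}^{\pm}=\emptyset$'' were read off in the proof of Lemma~\ref{lem:intersections}.

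For part~(ii) the same stripping applied to $e_k^{j_k}\cdots e_1^{j_1}(x_{e_1})\in e_k^{i_k}\cdots e_1^{i_1}(J_{h_l}^{\pm})$ forces $j_r=i_r$ for all $1\le r\le k$ (no slot is skipped on either side here), reducing the claim to $x_{e_1}\notin J_{h_l}^{\pm}$, that is $x_{e_1}\notin[x_l^-,x_l^+]$ and $x_{e_1}\notin[h_l(x_l^+),h_l(x_l^-)]$; this holds by inspection of Figure~\ref{fig:embedding}, since $x_{e_1}$ is in the interior of $I_o$ while $J_{h_l}^{\pm}$ are faces of the polygon $P$. The main obstacle is the $J^-$ case of part~(i): there the word multiplying the base arc in $g_{\xi}(J_{\lambda}^-)$ omits the $e_t$-slot while the point does not, so after cancelling $e_k^{u_k},\ldots,e_{t+1}^{u_{t+1}}$ there is a residual $e_t^{j_t}$ on the point side that cannot simply be matched against a word-coefficient. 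I would handle this as in the iterated arguments of Lemma~\ref{lem:intersections}: the leading $e_t^{j_t}$ drops $e_t^{j_t}e_{t-1}^{j_{t-1}}\cdots e_1^{j_1}(x_{e_1})$ into the $j_t$-th $e_t$-arc near $x_{e_t}$, which meets $e_{t-1}^{u_{t-1}}\cdots e_1^{u_1}\bigl([e_t^{u_t-1}(x_{e_t}),e_t^{u_t}(x_{e_t})]\bigr)$ only if $j_t=m_t$, whereupon $\lambda\in\Lambda_t$ forces $u_t\le m_t-1$ and Figure~\ref{fig:embedding} gives the contradiction (possibly after a further descent on $t$, exactly as in Lemma~\ref{lem:intersections}). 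Apart from this slot, the whole argument is a routine repetition of the estimates already carried out there.
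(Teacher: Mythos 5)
Your argument is correct and is exactly the approach the paper has in mind: the paper omits this proof entirely, saying only that it is similar to that of Lemma \ref{lem:intersections}, and your coordinate-stripping by the top $e_k$-exponent (with the endpoint-overflow cases ruled out via the definition of $\Lambda_t$ and the base cases read off Figure \ref{fig:embedding}) is precisely that argument. You also correctly isolate and dispose of the only genuinely new wrinkle, namely the skipped $e_t$-slot in the $J^-_{\lambda}$ case versus the residual $e_t^{j_t}$ on the point side.
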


We omit the proof since it is similar to one of Lemma \ref{lem:intersections}

\begin{proposition}\label{prop:ping_pong_fin_ind}
  There exist attracting domains $D(\rho(s))$ for $s \in S_1 \cup S_2 \cup S_1^{-1} \cup S_2^{-1}$ such that the elements
  \[
    \{ D(\rho(s)) \mid s \in S_1 \cup S_2 \cup S_1^{-1} \cup S_2^{-1} \} \cup \{ e_k^{j_k} \cdots e_1^{j_1}(x_{e_1}) \mid (j_1, \ldots j_k) \in [m_1] \times \cdots \times [m_k] \}
  \]
  are pairwise disjoint.
  In particular, the elements of $\rho(S_1 \cup S_2)$ have ping-pong dynamics.
\end{proposition}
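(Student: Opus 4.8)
The plan is to take the attracting domains to be the intervals already in hand, transported by the conjugating elements. For $s = {}^{g_\xi}f_\lambda \in S_1$ put $D(\rho(s)) = \rho(g_\xi)(J_\lambda^+)$ and $D(\rho(s^{-1})) = \rho(g_\xi)(J_\lambda^-)$, and for $s = {}^{w}h_l \in S_2$ with $w = e_k^{i_k}\cdots e_1^{i_1}$ put $D(\rho(s)) = \rho(w)(J_{h_l}^+)$ and $D(\rho(s^{-1})) = \rho(w)(J_{h_l}^-)$; this defines $D$ on all of $S_1 \cup S_1^{-1} \cup S_2 \cup S_2^{-1}$. If needed, the $J^-$-arcs will be shrunk and the $J^+$-arcs enlarged by an arbitrarily small amount near their endpoints; see below.

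The attracting inclusions are purely formal. Since each such $\rho(s)$ is $\rho(w)$-conjugate to one of $\rho(f_\lambda)^{\pm 1}$ or $\rho(h_l)^{\pm 1}$, cancelling $\rho(w)$ reduces $\rho(s)\bigl(S^1 \setminus D(\rho(s^{-1}))\bigr) \subset D(\rho(s))$ to the four inclusions
\begin{gather*}
  \rho(f_\lambda)\bigl(S^1 \setminus J_\lambda^-\bigr) \subset J_\lambda^+, \qquad \rho(f_\lambda)^{-1}\bigl(S^1 \setminus J_\lambda^+\bigr) \subset J_\lambda^-, \\
  h_l\bigl(S^1 \setminus J_{h_l}^-\bigr) \subset J_{h_l}^+, \qquad h_l^{-1}\bigl(S^1 \setminus J_{h_l}^+\bigr) \subset J_{h_l}^- .
\end{gather*}
Now for a closed interval $J = [a,b]$ we have $S^1 \setminus \Int J = [b,a]$, and an orientation-preserving homeomorphism $\varphi$ sends $[b,a]$ to $[\varphi(b), \varphi(a)]$. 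The $h_l$-inclusions are then immediate from the definitions of $J_{h_l}^\pm$ (e.g. $h_l([x_l^+, x_l^-]) = [h_l(x_l^+), h_l(x_l^-)] = J_{h_l}^+$, so $h_l(S^1 \setminus \Int J_{h_l}^-) = J_{h_l}^+$). For the $f_\lambda$-inclusions, writing $w_0 = e_{t-1}^{u_{t-1}}\cdots e_1^{u_1}$ so that $f_\lambda = e_t^{u_t}\, w_0\, e_t^{-u_t}\, w_0^{-1}$, a one-line computation from \eqref{align:J_lambda} gives $\rho(f_\lambda)(\partial_-(J_\lambda^-)) = \partial_+(J_\lambda^+)$ and $\rho(f_\lambda)(\partial_+(J_\lambda^-)) = \partial_-(J_\lambda^+)$; by orientation-preservation $\rho(f_\lambda)$ then carries $S^1 \setminus \Int J_\lambda^-$ onto $J_\lambda^+$, and the four inclusions follow at once.

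Disjointness is almost entirely precompiled. Items (i)--(vii) of Lemma \ref{lem:intersections} state precisely that the closed sets listed above are pairwise disjoint, and Lemma \ref{lem:intersections_2} that none of the finitely many points $e_k^{j_k}\cdots e_1^{j_1}(x_{e_1})$ meets any of them. The only gap is that Lemma \ref{lem:intersections}(ii) gives merely \emph{disjoint interiors} for the arcs $\rho(g_\xi)(J_\lambda^-)$; indeed those with a common $g_\xi$ whose $\lambda$'s differ only in the last coordinate abut at the single point $\rho(g_\xi)(w_0 e_t^{u_t}(x_{e_t}))$. To remove these finitely many contacts, shrink each $\rho(g_\xi)(J_\lambda^-)$ by a common small $\varepsilon$ at both ends and enlarge each $\rho(g_\xi)(J_\lambda^+)$ correspondingly. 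This respects the attracting inclusions: $\rho(f_\lambda)$ maps $S^1 \setminus \Int J_\lambda^-$ onto the proper sub-arc $J_\lambda^+$ (proper by Lemma \ref{lem:intersections}(iii)) and lies in $\Isom_+(\HH^2)$, hence is hyperbolic (the elliptic and parabolic cases being impossible for such a map), so its repelling fixed point lies in $\Int J_\lambda^-$; by North--South dynamics, for $\varepsilon$ small the $\rho(f_\lambda)$-image of the slightly enlarged complement of the shrunk $J_\lambda^-$ still lies inside the slightly enlarged $J_\lambda^+$. Since the finitely many sets that Lemma \ref{lem:intersections} declares disjoint are at positive distance apart, a small enough enlargement of the $J^+$-arcs leaves the whole family, points included, pairwise disjoint. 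This produces the required attracting domains, and in particular the elements of $\rho(S_1 \cup S_2)$ have ping-pong dynamics.

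I do not expect a deep obstacle. The commutator computation $\rho(f_\lambda)(\partial_\mp(J_\lambda^-)) = \partial_\pm(J_\lambda^+)$ is short, and the geometric content is already carried by Lemma \ref{lem:intersections}; the one step demanding a little care is the perturbation, i.e. checking that shrinking the $J_\lambda^-$'s just enough to separate the abutting pairs forces only a harmless enlargement of the $J_\lambda^+$'s — routine, given that the relevant elements are hyperbolic and the competing sets are at positive distance. In effect, the difficulty of the proposition has already been spent in the design of $S_1, S_2$ and in the interval estimates of Lemma \ref{lem:intersections}.
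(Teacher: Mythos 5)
Your proof is correct and follows essentially the same route as the paper: the attracting domains are the transported arcs $g_\xi(J_\lambda^{\pm})$ and $e_k^{i_k}\cdots e_1^{i_1}(J_{h_l}^{\pm})$, disjointness is delegated to Lemmas \ref{lem:intersections} and \ref{lem:intersections_2}, and the boundary contacts among the $J_\lambda^-$-arcs (where Lemma \ref{lem:intersections}(ii) only gives disjoint interiors) are removed by a small $\epsilon$-perturbation. The paper implements that perturbation slightly more cleanly by enlarging $g_\xi(J_\lambda^+)$ to $(g_\xi(J_\lambda^+))_\epsilon$ and then \emph{defining} $D\big(\rho(({}^{g_\xi}f_\lambda)^{-1})\big) = \mathrm{Cl}\big(({}^{g_\xi}f_\lambda)^{-1}(S^1\setminus (g_\xi(J_\lambda^+))_\epsilon)\big)$, so the attracting inclusion holds by construction rather than via the North--South-dynamics check you supply.
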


\begin{proof}
  By the definitions of $f_{\lambda}, g_{\xi}, J_{\lambda}^{\pm}$ and $J_{h_l}^{\pm}$, we have
  \begin{align*}
    &{}^{g_{\xi}}f_{\lambda}(S^1 \setminus g_{\xi}(J_{\lambda}^-)) \subset g_{\xi}(J_{\lambda}^+), \\
    &{}^{e_k^{i_k}\cdots e_1^{i_1}} h_l(S^1\setminus e_k^{i_k}\cdots e_1^{i_1} (J_{h_l}^-)) \subset e_k^{i_k}\cdots e_1^{i_1}(J_{h_l}^+).
  \end{align*}

  For a closed interval $I = [a,b] \subset S^1$ and a sufficiently small $\epsilon > 0$, we set $I_{\epsilon} = [a-\epsilon, b+\epsilon]$.
  By Lemmas \ref{lem:intersections} and \ref{lem:intersections_2}, there exists $\epsilon > 0$ such that the elements of the union
  \begin{align*}
    &\{ (g_{\xi}(J_{\lambda}^{+}))_{\epsilon} \mid 2 \leq t \leq k, \lambda \in \Lambda_t, \xi \in \Xi_t \} \\
    &\cup \{ \mathrm{Cl}(({}^{g_{\xi}}f_{\lambda})^{-1}(S^1 \setminus (g_{\xi}(J_{\lambda}^+))_{\epsilon})) \mid 2 \leq t \leq k, \lambda \in \Lambda_t, \xi \in \Xi_t \} \\
    &\cup \{ e_k^{i_k}\cdots e_1^{i_1}(J_{h_l}^{\pm}) \mid (i_1, \ldots, i_k) \in [m_1]\times \cdots \times [m_k], l \in [2n] \} \\
    &\cup \{ e_k^{i_k} \cdots e_1^{i_1}(x_{e_1}) \mid (i_1, \ldots, i_k) \in [m_1]\times \cdots \times [m_k] \}
  \end{align*}
  are pairwise disjoint.
  Here $\mathrm{Cl}(X)$ denotes the closure of $X$.
  Setting
  \begin{align*}
    &D\big(\rho({}^{g_{\xi}}f_{\lambda})\big) = (g_{\xi}(J_{\lambda}^{+}))_{\epsilon}, \\
    &D\big(\rho(({}^{g_{\xi}}f_{\lambda})^{-1})\big) = \mathrm{Cl}(({}^{g_{\xi}}f_{\lambda})^{-1}(S^1 \setminus (g_{\xi}(J_{\lambda}^+))_{\epsilon})), \\
    &D\big(\rho({}^{e_{k}^{i_k}\cdots e_{1}^{i_1}}h_l)\big) = e_k^{i_k}\cdots e_1^{i_1}(J_{h_l}^{+}), \\
    &D\big(\rho(({}^{e_{k}^{i_k}\cdots e_{1}^{i_1}}h_l)^{-1})\big) = e_k^{i_k}\cdots e_1^{i_1}(J_{h_l}^{-}),
  \end{align*}
  we obtain the proposition.
\end{proof}

\begin{corollary}
  The group $F = \langle S_1 \cup S_2 \rangle$ is a free subgroup with $[G:F] = m_1\cdots m_k$.
\end{corollary}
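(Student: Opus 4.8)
The plan is to split the statement into its two assertions: freeness of $F$ with free basis $S_1\cup S_2$ is a formal consequence of the ping-pong dynamics established in Proposition \ref{prop:ping_pong_fin_ind}, while the index count comes from Lemma \ref{lem:sort_2} together with the abelianization of $G$.

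For freeness I would run the usual ping-pong argument. By Proposition \ref{prop:ping_pong_fin_ind} the attracting domains $D(\rho(s))$, $s\in S_1\cup S_2\cup S_1^{-1}\cup S_2^{-1}$, are pairwise disjoint and all avoid the point $x_{e_1}$, which is among the listed extra points since $x_{e_1}=e_k^{m_k}\cdots e_1^{m_1}(x_{e_1})$ (because $e_l^{m_l}=1$). Given a nonempty word $w=s_1^{\varepsilon_1}\cdots s_p^{\varepsilon_p}$ that is reduced in the alphabet $S_1\cup S_2$, I would apply $\rho(s_p^{\varepsilon_p}),\dots,\rho(s_1^{\varepsilon_1})$ to $x_{e_1}$ in turn; at each stage the current point lies outside $D(\rho(s_j^{-\varepsilon_j}))$, for the first step because $x_{e_1}$ avoids every domain, and thereafter because the point lies in $D(\rho(s_{j+1}^{\varepsilon_{j+1}}))$, which is disjoint from $D(\rho(s_j^{-\varepsilon_j}))$ precisely because $s_{j+1}^{\varepsilon_{j+1}}\neq s_j^{-\varepsilon_j}$ by reducedness. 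Hence $\rho(w)(x_{e_1})\in D(\rho(s_1^{\varepsilon_1}))$, so $\rho(w)(x_{e_1})\neq x_{e_1}$ and $\rho(w)\neq\id$; since $\rho$ is a homomorphism this forces $w\neq 1$ in $G$. Therefore the tautological epimorphism from the free group on $S_1\cup S_2$ onto $F$ is injective, i.e. $F$ is free with free basis $S_1\cup S_2$ (this is of course consistent with the injectivity of $\rho$ in Lemma \ref{lem:inclusion}).

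For the index, set $r_{(i_1,\dots,i_k)}=e_k^{i_k}\cdots e_1^{i_1}$ for $(i_1,\dots,i_k)\in\{0,\dots,m_1-1\}\times\cdots\times\{0,\dots,m_k-1\}$; these are $m_1\cdots m_k$ pairwise distinct elements of $G$, the displayed words being normal forms in the free product $F_{2n}\ast\ZZ_{m_1}\ast\cdots\ast\ZZ_{m_k}$. By Lemma \ref{lem:sort_2}, every $g\in G$ satisfies $fg=e_k^{j_k}\cdots e_1^{j_1}$ for some $f\in F$ and some exponents; reducing each $j_l$ modulo $m_l$ via $e_l^{m_l}=1$ we may take $0\le j_l\le m_l-1$, so $g\in F\,r_{(j_1,\dots,j_k)}$ and $G=\bigcup F\,r_{(i_1,\dots,i_k)}$, giving $[G:F]\le m_1\cdots m_k$. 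For distinctness of these cosets I would reuse the abelianization argument from the first of the two fundamental-domain lemmas: with $q\colon G\to\HHH_1(G)\cong\ZZ^{2n}\times\ZZ_{m_1}\times\cdots\times\ZZ_{m_k}$ one has $q(S_1)=\{0\}$ and $q(S_2)\subset\ZZ^{2n}\times\{0\}\times\cdots\times\{0\}$, so $q(F)\subset\ZZ^{2n}\times\{0\}\times\cdots\times\{0\}$, whereas the $\ZZ_{m_l}$-coordinate of $q(r_{(i_1,\dots,i_k)})$ is $i_l\bmod m_l$; hence $r_{(i_1,\dots,i_k)}r_{(i'_1,\dots,i'_k)}^{-1}\in F$ forces $i_l\equiv i'_l\pmod{m_l}$ for all $l$, so the cosets coincide only for equal index tuples. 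Thus $[G:F]=m_1\cdots m_k$.

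I do not anticipate a genuine obstacle: the substantive work is already contained in Proposition \ref{prop:ping_pong_fin_ind} and Lemma \ref{lem:sort_2}. The only care needed is the routine translation of ``reduced word'' into ``the ping-pong chain never re-enters the forbidden domain'', and the observation that the coset representatives must be normalized into the ranges $0\le i_l\le m_l-1$ before one can count them. The fundamental-domain picture ($Q\cap\HH^2$ being the union of the $m_1\cdots m_k$ tiles $r_{(i_1,\dots,i_k)}(P\cap\HH^2)$ of the $G$-fundamental domain $P\cap\HH^2$) is a convenient sanity check, but since these domains have infinite hyperbolic area, making it rigorous reduces to exactly the same coset bookkeeping, so I would present the coset argument directly.
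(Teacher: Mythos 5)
Your proof is correct and follows essentially the route the paper intends: the corollary is stated there without proof as an immediate consequence of Proposition \ref{prop:ping_pong_fin_ind} (freeness via the classical ping-pong lemma, with $x_{e_1}$ available as a point outside all attracting domains) and of the two fundamental-domain lemmas for $Q$. Your index computation simply unpacks those two lemmas into the underlying coset bookkeeping --- covering of $G$ by the cosets $F\,e_k^{i_k}\cdots e_1^{i_1}$ via Lemma \ref{lem:sort_2} and their distinctness via the abelianization --- which is exactly the content of their proofs.
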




\subsection{Isolation of $c$}
By Proposition \ref{prop:co_from_rho}, we take a circular order $c$ whose dynamical realization with basepoint $x_{e_1}$ coincides with $\rho$.
In this subsection, we prove the following.
\begin{proposition}\label{prop:isolated}
  The circular order $c$ is isolated.
\end{proposition}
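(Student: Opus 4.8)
The plan is to combine the continuity of the dynamical realization (Theorem~\ref{thm:dyn_real_cont}) with the openness and rigidity of ping-pong dynamics, and then to recover the entire circular order from the ping-pong combinatorics of the finite-index free subgroup $F = \langle S_1\cup S_2\rangle$ together with the finite transversal $T = \{\, e_k^{i_k}\cdots e_1^{i_1} \mid (i_1,\ldots,i_k)\in[m_1]\times\cdots\times[m_k]\,\}$. Write $x_0 = x_{e_1}$. First I would fix a finite subset $S_0\subset G$ containing the generators $S_1\cup S_2$, the transversal $T$, and the finitely many group elements whose $\rho$-images are the endpoints of all the attracting domains $D(\rho(s))$ of Proposition~\ref{prop:ping_pong_fin_ind}, together with the finitely many products of generators needed to witness the nesting relations among those domains; this is possible since $F$ is finitely generated and only finitely many domains occur. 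Choosing a small enough neighborhood $U$ of $\rho$ in $\Hom(G,\Homeo_+(S^1))$ and applying Theorem~\ref{thm:dyn_real_cont}, I obtain a basic neighborhood $V = U_{c,S_0}$ of $c$ such that every $c'\in V$ has a dynamical realization $\rho'$ based at $x_0$ with $\rho'(g)$ uniformly close to $\rho(g)$ for all $g$ in the relevant finite set. It then suffices to prove $c' = c$ for every such $c'$, which yields $U_{c,S_0} = \{c\}$ and the isolation of $c$.

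The second step is to see that, after shrinking $U$ if necessary, $\rho'$ again has ping-pong dynamics with the \emph{same} combinatorics as $\rho$. Define attracting domains for the elements $\rho'(s)$ by the very formulas of Proposition~\ref{prop:ping_pong_fin_ind}, reading $\rho'$ in place of $\rho$ throughout. With this choice each defining inclusion $a(S^1\setminus D(a^{-1}))\subset D(a)$ continues to hold verbatim, since it is built into the definition of the domains: for the generators ${}^{g_\xi}f_\lambda$ because $D(({}^{g_\xi}f_\lambda)^{-1})$ is defined as the closure of the ${}^{g_\xi}f_\lambda$-preimage of the complement of $D({}^{g_\xi}f_\lambda)$, and for the generators ${}^{e_k^{i_k}\cdots e_1^{i_1}}h_l$ because there the inclusion is an equality of prescribed orbit-intervals of the basepoints $x_l^{\pm}$. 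What requires $\rho'$ to be close to $\rho$ is only that the domains and the points $\rho'(e_k^{j_k}\cdots e_1^{j_1})(x_0)$ remain pairwise disjoint: by Proposition~\ref{prop:ping_pong_fin_ind} and Lemmas~\ref{lem:intersections},~\ref{lem:intersections_2} these finitely many compact sets are pairwise disjoint, hence at positive distance, so disjointness is stable; and the nesting pattern (for each generator $s$ and each domain $D\neq D(\rho'(s^{-1}))$, which domain contains $\rho'(s)(D)$) together with the cyclic order around $S^1$ of all the domains and transversal points is determined by the cyclic order of finitely many endpoints, hence is identical for $\rho'$ and for $\rho$.

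The third step reconstructs $c'$. Given $g\in G$, Lemma~\ref{lem:sort_2} provides (uniquely) $f\in F$ and $t = e_k^{i_k}\cdots e_1^{i_1}\in T$ with $g = f^{-1}t$; writing $f^{-1} = s_1\cdots s_r$ as a reduced word over $(S_1\cup S_2)^{\pm 1}$ using freeness of $F$, one has $\rho'(g)(x_0) = \rho'(s_1)\cdots\rho'(s_r)\bigl(\rho'(t)(x_0)\bigr)$. Since $\rho'(t)(x_0)$ lies outside every attracting domain, the ping-pong dynamics assign to $\rho'(g)(x_0)$ a finite \emph{address}: the nested chain $D(\rho'(s_1))\supset \rho'(s_1)D(\rho'(s_2))\supset\cdots$ of domains containing it, terminated by the label $t$ of the transversal point reached at the bottom. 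By the second step this address depends only on the word $s_1\cdots s_r$, on $t$, and on the fixed combinatorial data, and so is the same whether computed with $\rho'$ or with $\rho$. A standard ping-pong coding argument then shows that the cyclic order of any three of the (pairwise distinct, since $\rho'$ is free at $x_0$) orbit points $\rho'(g_i)(x_0)$ is read off from the level at which their addresses first diverge, using only the fixed cyclic arrangement at that level. Hence $c'(g_1,g_2,g_3) = \mathrm{ord}\bigl(\rho'(g_1)(x_0),\rho'(g_2)(x_0),\rho'(g_3)(x_0)\bigr) = c(g_1,g_2,g_3)$ for all triples, so $c' = c$.

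The main obstacle is the second step: establishing that the ping-pong structure of $\rho$ — not merely the defining inclusions but the full cyclic combinatorics among the domains and transversal points — is \emph{robust} under the $C^0$-small perturbation $\rho'$, and that this combinatorics together with the finite transversal $T$ genuinely determines the cyclic order of the whole orbit $\rho'(G)\cdot x_0$. This is precisely what the explicit bookkeeping in Proposition~\ref{prop:ping_pong_fin_ind} and Lemmas~\ref{lem:intersections},~\ref{lem:intersections_2} is designed to support: once every domain endpoint and every transversal point is pinned down as an explicit orbit point with a known cyclic position, each assertion in the argument becomes a statement about the cyclic order of finitely many points, hence open and therefore stable. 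A minor additional point is that the single neighborhood $V = U_{c,S_0}$ can be arranged to enforce all of these finitely many conditions at once, which is immediate since $F$ is finitely generated and only finitely many domains occur.
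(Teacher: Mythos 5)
Your proposal is correct and follows essentially the same route as the paper: a neighborhood of $\rho$ in $\Hom(G,\Homeo_+(S^1))$ on which the ping-pong domains and transversal points stay pairwise disjoint with unchanged combinatorics (the paper's Lemma~\ref{lem:neighborhood_construction}), the fact that this combinatorial data determines the cyclic order of the whole orbit (the paper's Lemma~\ref{lem:data_det_cyclic_order}, quoted from Mann--Rivas, for which your ``address''/coding argument is the omitted proof), and Theorem~\ref{thm:dyn_real_cont} to pull this back to a neighborhood of $c$ in $CO(G)$. No gaps; your step 2 and step 3 are exactly the content the paper delegates to those two lemmas.
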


The following is a slight generalization of \cite[Lemma 4.2]{MR3887426}.

\begin{lemma}\label{lem:data_det_cyclic_order}
  Let $\GG$ be a group and $F_{r}$ a rank $r$ free subgroup of $\GG$ of finite index.
  Set $m = [\GG:F_r]$.
  Let $\{ \gamma_1 = 1, \gamma_2, \ldots, \gamma_m \}$ be a complete set of representatives of $F_r \backslash \GG$.
  Let $a_1, \ldots, a_r$ be a free generating set of $F_r$.
  Let $\sigma \colon \GG \to \Homeo_+(S^1)$ be a homomorphism such that the elements $\sigma(a_1), \ldots, \sigma(a_r)$ have ping-pong dynamics.
  Let $x_0 \in S^1$.
  Assume that the elements of
  \[
    \{ D(\sigma(a_i)) \mid i \in [r] \} \cup \{ D(\sigma(a_i)^{-1}) \mid i \in [r] \} \cup \{ \sigma(\gamma_j)(x_0) \mid j \in [m] \}
  \]
  are pairwise disjoint.
  Then, the action $\sigma$ is free at $x_0$ and the cyclic order of its orbit is completely determined by the following data:
  \begin{enumerate}[$(1)$]
    \item the cyclic order of the sets $D_l(\rho(s))$ and $\{ \sigma(\gamma_j)(x_0) \}$ for $s \in \{ a_1^{\pm 1}, \ldots, a_r^{\pm 1} \}$, and
    \item the collection of containment relations
    \begin{align*}
      &\sigma(s)(D_{l'}(t)) \subset D_{l}(s), \\
      &\sigma(s)(\sigma(\gamma_j)(x_0)) \in D_{l}(s).
    \end{align*}
  \end{enumerate}
\end{lemma}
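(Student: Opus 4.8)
The plan is to mimic the proof of \cite[Lemma 4.2]{MR3887426}, which handles the case $m=1$, and to incorporate the finitely many extra orbit points $\sigma(\gamma_j)(x_0)$ into the bookkeeping. First I would establish freeness at $x_0$: given a nontrivial reduced word $w = s_k \cdots s_1$ in the generators $a_i^{\pm 1}$, the ping-pong property together with the hypothesis that the $D(\sigma(a_i)^{\pm 1})$ are pairwise disjoint and disjoint from $x_0$ forces $\sigma(w)(x_0) \in D(s_k) \not\ni x_0$ by the usual induction (each application of $\sigma(s_j)$ sends the current point, which lies outside $D(s_{j}^{-1})$ since it lies in $D(s_{j-1})$ with $s_{j-1} \neq s_j^{-1}$, into $D(s_j)$). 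For a general element $g \in \GG$, write $g = w\gamma_j$ with $w \in F_r$; if $w \neq 1$ then $\sigma(g)(x_0) = \sigma(w)(\sigma(\gamma_j)(x_0))$, and since $\sigma(\gamma_j)(x_0) \notin D(s_1^{-1})$ by the disjointness hypothesis, the same induction gives $\sigma(g)(x_0) \in D(s_k)$, while $x_0 \notin D(s_k)$; if $w = 1$ then $g = \gamma_j$ with $j \neq 1$ and $\sigma(\gamma_j)(x_0) \neq x_0 = \sigma(\gamma_1)(x_0)$ again by disjointness. Hence $\sigma$ is free at $x_0$.

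Next I would show that the cyclic order of any triple $\sigma(g_1)(x_0), \sigma(g_2)(x_0), \sigma(g_3)(x_0)$ of distinct orbit points is computed from the data (1) and (2). Write each $g_p = w_p \gamma_{j(p)}$ and observe that, by the induction above, $\sigma(g_p)(x_0)$ lies in a specific nested chain of connected components $D_{l_1}(s^{(p)}_1) \supset \sigma(s^{(p)}_1)(D_{l_2}(s^{(p)}_2)) \supset \cdots$, terminating either in a component $\sigma(s^{(p)}_1 \cdots s^{(p)}_{k_p})(D_{l}(t))$ (when $w_p \neq 1$ and $t \neq (s^{(p)}_{k_p})^{-1}$ is the first letter of $\gamma_{j(p)}$'s ping-pong word — more carefully, one tracks $\sigma(\gamma_{j(p)})(x_0)$ directly as one of the designated points) or in the point $\sigma(\gamma_{j(p)})(x_0)$ itself. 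The key combinatorial fact, exactly as in \cite{MR3887426}, is that two such nested chains either are nested one inside the other or first diverge at some common prefix, after which the two points lie in distinct components whose relative cyclic position is recorded in data (1); and the ``which component'' information at every stage is recorded in data (2). Thus $\mathrm{ord}(\sigma(g_1)(x_0), \sigma(g_2)(x_0), \sigma(g_3)(x_0))$ is determined.

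The only genuinely new point compared to \cite[Lemma 4.2]{MR3887426} is the presence of the coset representatives: one must treat the finitely many points $\sigma(\gamma_j)(x_0)$ on the same footing as the ping-pong domains, which is precisely why the hypothesis demands that the $\sigma(\gamma_j)(x_0)$ be disjoint from (i.e. not contained in) any attracting domain, and why data (1) includes the cyclic order of the $\{\sigma(\gamma_j)(x_0)\}$ among the $D_l(\sigma(s))$. I would therefore phrase the induction on an arbitrary $g\in\GG$ as ``reduce $g$ to a word in $F_r$ applied to one of the $m$ base points $\sigma(\gamma_j)(x_0)$,'' and note that every step of the argument of \cite{MR3887426} goes through verbatim with ``the point $x_0$'' replaced by ``one of the points $\sigma(\gamma_j)(x_0)$.'' The main obstacle — really the only place requiring care — is making sure the base case of the nested-chain comparison is correct when one or both chains bottom out at a point $\sigma(\gamma_j)(x_0)$ rather than at an attracting domain: here one invokes that such a point is a single point disjoint from all the $D_l(s)$ and from the other base points, so its cyclic position relative to the domains is exactly the datum recorded in (1). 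Everything else is the routine ping-pong bookkeeping of \cite{MR3887426}, which I would not reproduce in detail.
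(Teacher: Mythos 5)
Your proposal is correct and is exactly the argument the paper has in mind: the paper omits the proof, stating only that it is essentially the same as Lemma 4.2 of Mann--Rivas, and your write-up carries out that ping-pong bookkeeping with the one genuinely new ingredient (treating the $m$ points $\sigma(\gamma_j)(x_0)$ as additional base objects, disjoint from all attracting domains, in both the freeness induction and the nested-chain comparison) handled correctly. No gaps.
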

We omit the proof since it is essentially the same as that of \cite[Lemma 4.2]{MR3887426}.

\begin{lemma}\label{lem:neighborhood_construction}
  There exists a neighborhood $U$ of $\rho$ in $\Hom(G, \Homeo_+(S^1))$ such that for every $\sigma \in U$, the following hold;
  \begin{enumerate}[$(1)$]
    \item $\sigma$ acts on $x_{e_1}$ freely,
    \item the circular order of $\{ \sigma(g)(x_{e_1}) \}_{g \in G}$ coincides with that of $\{ \rho(g)(x_{e_1}) \}_{g \in G}$.
  \end{enumerate}
\end{lemma}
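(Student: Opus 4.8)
The plan is to reduce the statement to Lemma~\ref{lem:data_det_cyclic_order}, applied both to $\rho$ and to each homomorphism $\sigma$ close to $\rho$. By Proposition~\ref{prop:ping_pong_fin_ind}, the homomorphism $\rho$ satisfies the hypotheses of Lemma~\ref{lem:data_det_cyclic_order} with respect to the free subgroup $F=\langle S_1\cup S_2\rangle$ (a free group on $S_1\cup S_2$, of index $m_1\cdots m_k$ in $G$), the complete set $\{e_k^{j_k}\cdots e_1^{j_1}\mid (j_1,\ldots,j_k)\in[m_1]\times\cdots\times[m_k]\}$ of representatives of $F\backslash G$ (Lemma~\ref{lem:sort_2}), and the basepoint $x_{e_1}$. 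Hence the cyclic order of the orbit $\rho(G)\cdot x_{e_1}$ is completely determined by the combinatorial data (1) and (2) of Lemma~\ref{lem:data_det_cyclic_order}. The goal is therefore to produce a neighborhood $U$ of $\rho$ in $\Hom(G,\Homeo_+(S^1))$ such that every $\sigma\in U$ again satisfies those hypotheses, with the \emph{same} attracting domains and realizing the \emph{same} data (1) and (2); then Lemma~\ref{lem:data_det_cyclic_order} applied to $\sigma$ yields simultaneously that $\sigma$ is free at $x_{e_1}$ (conclusion (1)) and that the cyclic order of $\sigma(G)\cdot x_{e_1}$ coincides with that of $\rho(G)\cdot x_{e_1}$ (conclusion (2)).

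The mechanism is the openness of the ping-pong configuration, and the one point that needs care is that the attracting domains of Proposition~\ref{prop:ping_pong_fin_ind} have images that exactly fill their interiors, so one must first pass to genuinely nested domains. Concretely, I would fix, for each $s\in(S_1\cup S_2)^{\pm 1}$, closed domains $D^-(\rho(s))\subset\Int(D^+(\rho(s)))$ with the $D^+(\rho(s))$ pairwise disjoint and disjoint from all $e_k^{j_k}\cdots e_1^{j_1}(x_{e_1})$, and such that
\[
  \rho(s)\big(\mathrm{Cl}(S^1\setminus D^-(\rho(s^{-1})))\big)\subset\Int\big(D^+(\rho(s))\big)\qquad\text{for every } s.
\]
These are available from the proof of Proposition~\ref{prop:ping_pong_fin_ind}: for $s={}^{g_\xi}f_\lambda$ one takes the $(\epsilon/2)$- and $\epsilon$-fattenings of $g_\xi(J_\lambda^+)$ together with the corresponding repelling domains, and for $s={}^{e_k^{i_k}\cdots e_1^{i_1}}h_l$, which acts with north--south dynamics, one takes $e_k^{i_k}\cdots e_1^{i_1}(J_{h_l}^\pm)$ and a small fattening of it; in each case the image of the complement of the inner repelling domain is a compact subset of the interior of the outer attracting domain.

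Finally I would choose $U$ small enough that, for every $\sigma\in U$: (a) the elements $\sigma(s)$, $s\in S_1\cup S_2$, have ping-pong dynamics with the domains $D^+(\rho(s^{\pm1}))$ — because $S^1\setminus D^+(\rho(s^{-1}))\subset\mathrm{Cl}(S^1\setminus D^-(\rho(s^{-1})))$ and $\sigma(s)$, being $C^0$-close to $\rho(s)$, still maps this compact set into the open set $\Int(D^+(\rho(s)))$; (b) the points $\sigma(e_k^{j_k}\cdots e_1^{j_1})(x_{e_1})$ are pairwise distinct, avoid every $D^+(\rho(s^{\pm1}))$, and lie in the same complementary arcs of $\bigcup_s D^+(\rho(s^{\pm1}))$, in the same cyclic order, as the $\rho(e_k^{j_k}\cdots e_1^{j_1})(x_{e_1})$; and (c) for each generator $s$ and each component $D^+_{l'}(t)$ with $t\neq s^{-1}$, the connected set $\sigma(s)(D^+_{l'}(t))$ is contained in $D^+_l(\rho(s))$ for the same index $l$ as for $\rho$ (here one uses that $D^+_{l'}(t)\subset\mathrm{Cl}(S^1\setminus D^-(\rho(s^{-1})))$, so $\rho(s)(D^+_{l'}(t))$ is a compact connected subset of $\Int(D^+_l(\rho(s)))$ and persists under small perturbation), and likewise $\sigma(s)(\sigma(e_k^{j_k}\cdots e_1^{j_1})(x_{e_1}))$ lies in the same component as for $\rho$. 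Each of these requirements involves only the values of $\sigma$ on the finite set $S_1\cup S_2$ and on the finitely many fixed words $e_k^{j_k}\cdots e_1^{j_1}$ and $s\cdot e_k^{j_k}\cdots e_1^{j_1}$, hence cuts out a neighborhood of $\rho$; take $U$ to be their intersection. For $\sigma\in U$, (a) and (b) give the hypotheses of Lemma~\ref{lem:data_det_cyclic_order} with attracting domains $D^+(\rho(s^{\pm1}))$ and basepoint $x_{e_1}$, while (b), (c), and the fact that the domains $D^+_l(\rho(s))$ do not depend on $\sigma$ say that the data (1) and (2) for $\sigma$ equal those for $\rho$; this completes the proof. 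I expect the only genuinely delicate step to be the first one — replacing the domains of Proposition~\ref{prop:ping_pong_fin_ind} by nested ones, which forces introducing a second fattening parameter and invoking the north--south dynamics of the hyperbolic generators — after which everything is a routine "finitely many open conditions" argument, in the spirit of \cite[Lemma 4.2]{MR3887426}.
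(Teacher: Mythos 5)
Your proposal is correct and follows essentially the same route as the paper: fatten the attracting domains of Proposition \ref{prop:ping_pong_fin_ind} by a small $\epsilon$ so that the ping-pong configuration (together with the orbit points $e_k^{j_k}\cdots e_1^{j_1}(x_{e_1})$) becomes an open condition, take $U$ to be the set of $\sigma$ uniformly $\epsilon$-close to $\rho$ on the finite set $S_1\cup S_2\cup S_1^{-1}\cup S_2^{-1}\cup\{e_k^{i_k}\cdots e_1^{i_1}\}$, and conclude via Lemma \ref{lem:data_det_cyclic_order} that the data (1) and (2) for $\sigma$ coincide with those for $\rho$. Your extra care about replacing the domains by nested ones is exactly what the paper's $\epsilon$-neighborhood trick accomplishes, so the two arguments are the same in substance.
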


\begin{proof}
  For $A \subset S^1$ and for $\epsilon >0$, let $N(A, \epsilon)$ be the open $\epsilon$-neighborhood of $A$.
  We take attracting domains $D(\rho(s))$ as in Proposition \ref{prop:ping_pong_fin_ind}, where $s \in S_1\cup S_2 \cup S_1^{-1} \cup S_2^{-1}$.
  Then there exists $\epsilon > 0$ such that the elements of the union
  \begin{align*}
    &\{ N(D(\rho(s)), \epsilon) \mid s \in S_1 \cup S_2 \cup S_1^{-1} \cup S_2^{-1} \} \\
    &\cup \{ N(e_k^{i_k} \cdots e_1^{i_1}(x_{e_1}), \epsilon) \mid (i_1, \ldots, i_k) \in [m_1]\times \cdots \times [m_k] \}
  \end{align*}
  are pairwise disjoint.
  We set
  \[
    S = S_1 \cup S_2 \cup S_1^{-1} \cup S_2^{-1} \cup \{ e_{k}^{i_k}\cdots e_{1}^{i_1} \mid (i_1, \cdots, i_k) \in [m_1] \times \cdots \times [m_k] \}
  \]
  and
  \[
    U = \left\{ \sigma \in \Hom(G, \Homeo_+(S^1)) \, \middle| \, \sup_{x \in S^1} \sup_{g \in S}  d(\rho(g)(x), \sigma(g)(x)) < \epsilon \right\},
  \]
  where $d$ denotes the standard metric on $S^1$.
  Obviously, this $U$ is an open neighborhood of $\rho$.

  Take $\sigma \in U$.
  Then, by the definition of $\epsilon$, we can take attracting domains of $\sigma(s)$ for $s \in S_1 \cup S_2 \cup S_1^{-1} \cup S_2^{-1}$ satisfying the assumption of Lemma \ref{lem:data_det_cyclic_order}.
  Moreover, the data (1) and (2) in Lemma \ref{lem:data_det_cyclic_order} for this $\sigma$ coincide with those for $\rho$.
  Hence, Lemma \ref{lem:data_det_cyclic_order} implies that $\sigma$ acts on $x_{e_1}$ freely and that the circular order of $\{ \sigma(g)(x_{e_1}) \}_{g \in G}$ coincides with one of $\{ \rho(g)(x_{e_1}) \}_{g \in G}$.
\end{proof}

\begin{proof}[Proof of Proposition \ref{prop:isolated}]
  By Proposition \ref{prop:co_from_rho}, the action $\rho$ is a dynamical realization of a circular order $c$ based at $x_{e_1}$.
  We take an open neighborhood $U$ of $\rho$ as in Lemma \ref{lem:neighborhood_construction}.
  Then, Theorem \ref{thm:dyn_real_cont} implies that there exists a neighborhood $V$ of $c$ in $CO(G)$ such that each order in $V$ has a dynamical realization based at $x_{e_1}$ contained in $U$.
  Hence, by (2) of Lemma \ref{lem:data_det_cyclic_order}, every order in $V$ coincides with $c$, which implies the isolation of $c$.
\end{proof}

\subsection{Infinitely many isolated orders on $G$}\label{subsec:inf_many_iso_circ_ord}
In this subsection, we construct infinitely many isolated circular orders by lifting the action $\rho$ to actions on finite covering spaces of $S^1$.

For an integer $d \geq 1$, let $\tau_d \colon S^1 \to S^1$ be the $(1/d)$-rotation and set
\[
  \Homeo_{+}^{(d)}(S^1) = \{ f \in \Homeo_+(S^1) \mid f \circ \tau_d = \tau_d \circ f \}.
\]
Note that the $d$-fold covering $p_d \colon S^1 \to S^1$ induces the exact sequence
\[
  0 \to \ZZ_d \to \Homeo_+^{(d)}(S^1) \xrightarrow{\pi} \Homeo_+(S^1) \to 1.
\]

For an integer $m \geq 2$, the homomorphism $\phi \colon \ZZ_m \to \Homeo_+(S^1)$ defined by $\phi(1) = \tau_m$ lifts to a homomorphism $\tphi \colon \ZZ_m \to \Homeo_+^{(d)}(S^1)$ if and only if there exists $i \in \ZZ_{\geq 0}$ such that $d$ divides $mi+1$.
Note that if $d$ divides $mi + 1$, then the lift $\tphi$ is given by setting $\tphi(1)$ as the lift of $\phi(1)$ with $\rot(\tphi(1)) = (mi+1)/dm$.
In particular, the homomorphism $\rho \colon G \to \Homeo_+(S^1)$ lifts to a homomorphism $\trho \colon G \to \Homeo_+^{(d)}(S^1)$ if and only if for every $l \in [k]$, there exists $i_l \in \ZZ_{\geq 0}$ such that $d$ divides $m_l i_l + 1$.

We take a specific lift $\rho^{(d)} \colon G \to \Homeo_+^{(d)}(S^1)$ as follows.
Assume that for every $l \in [k]$, there exists $i_l \in \ZZ_{\geq 0}$ such that $d$ divides $m_l i_l + 1$.
For every $i \in [2n]$, let $\rho^{(d)}(h_i)$ be the lift of $\rho(h_i)$ with $\rot(\rho^{(d)}(h_i)) = 0$.
For every $l \in [k]$, let $\rho^{(d)}(e_l)$ be the lift of $\rho(e_l)$ with $\rot(\rho^{(d)}(e_l)) = (m_l i_l + 1)/dm_l$.
This provides a well-defined lift $\rho^{(d)} \colon G \to \Homeo_+^{(d)}(S^1)$ of $\rho$.

Let $\widetilde{x_{e_1}}$ be a point of $p_d^{-1}(x_b)$.
Let $I_1, \cdots, I_d$ be the connected components of $p_d^{-1}(I_o)$, where $I_1$ is the interval containing $\widetilde{x_{e_1}}$ and they are aligned in this order with respect to the orientation of $S^1$.

\begin{lemma}\label{lem:transitive_I_1_covering}
  There exist infinitely many $a \in \ZZ_{\geq 0}$ such that $d = m_1 \cdots m_k \cdot a + 1$ satisfies the following;
  \begin{enumerate}
    \item for every $l \in [k]$, there exists $i_l \in \ZZ_{\geq 0}$ such that $d$ divides $m_l i_l + 1$, and
    \item for every $i \in [d]$, there exists a unique $j \in [d]$ such that $I_i = \rho^{(d)}(\alpha^j)(I_1)$, where $\alpha = e_1 \cdots e_k [h_1, h_2] \cdots [h_{2n-1}, h_{2n}]$.
  \end{enumerate}
\end{lemma}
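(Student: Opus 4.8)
The plan splits into an easy reduction, a translation-number computation, and a number-theoretic step. Condition (1) is elementary: since $d=m_1\cdots m_k\,a+1\equiv 1\pmod{m_l}$, the integer $m_l$ is invertible modulo $d$, so one may take $i_l$ to be any nonnegative integer in the residue class $-m_l^{-1}\bmod d$; this works for every $a\in\ZZ_{\ge 0}$, so (1) imposes no restriction. The content is in (2).

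For (2), first note that $\rho^{(d)}(\alpha)\in\Homeo_+^{(d)}(S^1)$ commutes with the rotation $\tau_d$, which sends $I_j$ to $I_{j+1}$ (indices mod $d$); hence $\rho^{(d)}(\alpha)$ permutes $\{I_1,\dots,I_d\}$ by a cyclic shift $I_j\mapsto I_{j+s}$ for a well-defined $s=s(d)\in\ZZ/d\ZZ$, and since $\rho^{(d)}(\alpha^j)(I_1)=I_{1+js}$, statement (2) is equivalent to $\gcd(s,d)=1$. To compute $s$, lift the generators to $\RR$: take for $\rho^{(d)}(e_l)$ the lift with translation number $(m_li_l+1)/(dm_l)$ and for $\rho^{(d)}(h_i)$ the lift with translation number $0$, giving a lift $\widetilde{\rho^{(d)}(\alpha)}$ of $\rho^{(d)}(\alpha)$. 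Transporting along the degree-$d$ rescaling that identifies lifts to $\RR$ of elements of $\Homeo_+^{(d)}(S^1)$ with elements of $\tHomeo_+(S^1)$ (this multiplies translation numbers by $d$ and carries $\widetilde{\rho^{(d)}(\alpha)}$ to a lift of $\rho(\alpha)$), and using that $\rho(\alpha)$ is an infinite-order isometry stabilizing the gap $I_o$ (Lemma \ref{lem:stabilizer}), hence hyperbolic and fixing $\partial I_o$, so that the lift of $\rho(\alpha)$ with translation number $0$ fixes $\partial I_o$, one reads off
\[
  s\equiv e+\sum_{l=1}^k i_l\pmod d,\qquad e:=\trot\bigl(\widetilde{\rho(e_1)}\cdots\widetilde{\rho(e_k)}\,[\widetilde{\rho(h_1)},\widetilde{\rho(h_2)}]\cdots[\widetilde{\rho(h_{2n-1})},\widetilde{\rho(h_{2n})}]\bigr),
\]
where $\widetilde{\rho(e_l)}$ and $\widetilde{\rho(h_i)}$ are the lifts with translation numbers $1/m_l$ and $0$. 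Here $e$ is an integer — it is the translation number of a lift of the hyperbolic element $\rho(\alpha)$ — depending only on the embedding $\rho$ of Figure \ref{fig:embedding} (the relative Euler number of that Fuchsian representation). Using $i_l\equiv-m_l^{-1}\pmod d$ and multiplying by the unit $M:=m_1\cdots m_k$,
\[
  \gcd(s,d)=\gcd\bigl(Me-\Sigma,\,d\bigr),\qquad \Sigma:=\sum_{l=1}^k\frac{M}{m_l}=\sum_{l=1}^k\prod_{j\ne l}m_j .
\]

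It remains to find infinitely many $a$ with $\gcd(Me-\Sigma,\,m_1\cdots m_k\,a+1)=1$, and the key point is that $c_0:=Me-\Sigma$ is a fixed nonzero integer. This is immediate when $\sum_l 1/m_l<1$: then $0<\Sigma<M$, so $c_0\equiv-\Sigma\not\equiv 0\pmod M$. In the remaining cases one computes $e$ explicitly from Figure \ref{fig:embedding}, via the standard translation-number calculus for products of the elliptic generators and of the commutators of the hyperbolic generators in $\widetilde{\PSL}(2,\RR)$, and checks $c_0\ne 0$ using the hypothesis that $(n,k,m_1,\dots,m_k)$ is neither $(0,1,m_1)$ nor $(0,2,2,2)$. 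Granting $c_0\ne 0$: a prime $p$ divides $\gcd(c_0,\,Ma+1)$ only if $p\mid c_0$, $p\nmid M$, and $a\equiv-M^{-1}\pmod p$; since $c_0$ has only finitely many prime divisors, the "bad" $a$ form a finite union of arithmetic progressions, so by the Chinese Remainder Theorem the complement has density $\prod_{p\mid c_0,\,p\nmid M}(1-1/p)>0$. Every $a$ in the complement gives $\gcd(s,d)=1$, hence satisfies (2); thus infinitely many $a$ work.

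The step I expect to be most delicate is the explicit determination of $e$ — in particular getting signs and orientation conventions right, and treating the cases $\sum_l 1/m_l\ge 1$, where the congruence modulo $M$ is inconclusive and one genuinely uses the chosen embedding rather than the abstract group. The translation-number bookkeeping leading to $s\equiv e+\sum_l i_l$ is routine, provided one is careful about which lift of each factor is used and notes that the commutators $[\widetilde{\rho(h_{2i-1})},\widetilde{\rho(h_{2i})}]$ do not depend on that choice.
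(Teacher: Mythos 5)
Your overall strategy is the paper's: reduce (2) to the condition that the shift $s$ with $\rho^{(d)}(\alpha)(I_i)=I_{i+s}$ is coprime to $d$, identify $s$ via the translation number of a lift of $\rho^{(d)}(\alpha)$, and then do elementary number theory in $a$. Your bookkeeping is correct as far as it goes: condition (1) is indeed automatic, $s\equiv e+\sum_l i_l\pmod d$ with $i_l\equiv -m_l^{-1}$, and multiplying by the unit $M$ gives $\gcd(s,d)=\gcd(Me-\Sigma,d)$, which matches the paper's $\gcd(d,\,Np_0-p_1)$ once one knows $e=2n+k-1$. Your closing step is even slightly more elementary than the paper's: the paper invokes Dirichlet's theorem on primes in arithmetic progressions (its Lemma \ref{lem:infinitely_many_rel_prime}), whereas your observation that $\gcd(Ma+1,\,c_0)$ is controlled by the finitely many primes dividing the fixed integer $c_0$, so that a CRT/density argument suffices, is cleaner.

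The genuine gap is that you never compute $e$, and that computation is the heart of the lemma: it is exactly where the geometry of Figure \ref{fig:embedding} (the chain of intervals $J_i$, $K_i^{\pm}$ in Figure \ref{fig:transition}) and the excluded tuples $(0,1,m_1)$, $(0,2,2,2)$ enter. The paper obtains $\rot(\rho^{(d)}(\alpha))=\bigl(2n-1+k+\sum_l i_l\bigr)/d$ by tracking the orbit of an endpoint of $I_1$ around the fundamental polygon, i.e.\ $e=2n+k-1$, and then verifies $c_0=M(2n+k-1)-\Sigma\neq 0$ case by case in $k$. Your mod-$M$ shortcut ($c_0\equiv-\Sigma\not\equiv 0$) only disposes of the case $\sum_l 1/m_l<1$; the remaining cases are not marginal --- they include every tuple with $m_1=m_2=2$ and $k\geq 2$, all of $k\geq 3$ with small $m_l$, etc. --- and for those you have deferred precisely the step on which the conclusion depends (e.g.\ for $(n;2,2)$ one needs $e\neq 1$, i.e.\ $n\geq 1$, which is invisible without the value of $e$). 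So the plan would succeed if executed, but as written the decisive computation is missing rather than merely delicate.
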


In the proof of Lemma \ref{lem:transitive_I_1_covering}, we use the following lemma.
\begin{lemma}\label{lem:infinitely_many_rel_prime}
  Let $p_0, p_1$ be elements of $\ZZ_{\geq 1}$ and $N \in \ZZ_{\geq 0}$.
  Assume that $Np_0 \neq p_1$.
  Then, there exist infinitely many $a \in \ZZ_{\geq 0}$ such that $p_0a+1$ and $p_1a+N$ are relatively prime.
\end{lemma}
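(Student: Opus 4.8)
The plan is to treat this as an elementary number-theoretic statement and reduce it to showing that the two linear forms $p_0 a + 1$ and $p_1 a + N$ cannot share a common prime factor for all but finitely many $a$, and in fact for infinitely many $a$ they share no common prime factor at all. Suppose a prime $q$ divides both $p_0 a + 1$ and $p_1 a + N$. Then $q$ divides $p_1(p_0 a + 1) - p_0(p_1 a + N) = p_1 - p_0 N$. By the hypothesis $N p_0 \neq p_1$, the integer $p_1 - p_0 N$ is nonzero, so there are only finitely many primes $q$ that could ever arise as a common divisor of the two forms — namely the prime divisors of $|p_1 - p_0 N|$. Call this finite set of primes $\{q_1, \ldots, q_s\}$.

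Next I would show that for each such prime $q_i$, the set of residues $a \bmod q_i$ for which $q_i \mid p_0 a + 1$ is a proper subset of $\ZZ/q_i\ZZ$: indeed, the congruence $p_0 a \equiv -1 \pmod{q_i}$ has at most one solution modulo $q_i$ if $q_i \nmid p_0$ (and \emph{no} solution if $q_i \mid p_0$, since then $q_i \nmid p_0 a + 1$ automatically). Either way, there is at least one residue class $r_i \bmod q_i$ with $q_i \nmid p_0 r_i + 1$; choosing $a \equiv r_i \pmod{q_i}$ guarantees $q_i$ divides neither form (it cannot divide $p_1 a + N$ alone since any common-divisor candidate among the $q_i$ must already divide $p_0 a + 1$ — more carefully, if $q_i \mid p_1 a + N$ but $q_i \nmid p_0 a + 1$, that is fine for us; what we need is only that $p_0 a + 1$ and $p_1 a + N$ have no common prime factor, and we have excluded all primes outside $\{q_1,\ldots,q_s\}$ by the first paragraph, so it suffices to kill each $q_i$ as a divisor of $p_0 a + 1$).

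By the Chinese Remainder Theorem, choose $a_0$ with $a_0 \equiv r_i \pmod{q_i}$ for every $i \in \{1, \ldots, s\}$; then every $a$ in the arithmetic progression $a_0 + (q_1 \cdots q_s)\ZZ$ satisfies $q_i \nmid p_0 a + 1$ for all $i$, and since any common prime divisor of $p_0 a + 1$ and $p_1 a + N$ lies in $\{q_1, \ldots, q_s\}$, the two numbers are relatively prime. Intersecting this progression with $\ZZ_{\geq 0}$ still leaves infinitely many values of $a$, which proves the lemma.

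I do not expect a genuine obstacle here; the only point requiring a little care is the degenerate situation where some $q_i$ divides $p_0$, or where $p_0 = 1$ so that $p_0 a + 1$ has essentially no constraints — but in all these cases the relevant congruence is either unsolvable or has a unique solution modulo $q_i$, so a valid residue $r_i$ always exists. One should also note explicitly that we may assume $p_1 - p_0 N \neq \pm 1$ is not needed: even if $|p_1 - p_0 N| = 1$ the set $\{q_1,\ldots,q_s\}$ is empty and the conclusion is immediate, and the degenerate reading where the progression modulus $q_1\cdots q_s$ is an empty product equals $1$ causes no trouble.
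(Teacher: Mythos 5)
Your proof is correct, and it takes a genuinely different route from the paper's. You observe that any common prime divisor $q$ of $p_0a+1$ and $p_1a+N$ must divide the ``resultant'' $p_1(p_0a+1)-p_0(p_1a+N)=p_1-Np_0$, which is nonzero by hypothesis; this confines all possible common primes to the finite set of prime divisors of $|p_1-Np_0|$, and you then use the Chinese Remainder Theorem to pick an arithmetic progression of $a$'s along which none of these primes divides $p_0a+1$ (handling correctly the cases $q_i\mid p_0$ and $q_i\nmid p_0$, and the degenerate case $|p_1-Np_0|=1$ where the set of bad primes is empty). The paper instead argues by a Euclidean-algorithm descent: when $p_0>p_1$ it invokes Dirichlet's theorem on primes in arithmetic progressions to make $p_0a+1$ a prime exceeding $p_1a+N$, and when $p_0\le p_1$ it writes $p_1=p_0q+r$ and reduces $\gcd(p_0a+1,\,p_1a+N)=\gcd(p_0a+1,\,ra+N-q)$, iterating until the first case applies. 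Your argument is entirely elementary --- it needs no input beyond CRT, whereas the paper leans on Dirichlet's theorem --- and it is arguably more transparent, since it makes explicit exactly which primes can obstruct coprimality; the paper's version has the minor virtue of producing $a$'s for which $p_0a+1$ is actually prime, but that extra strength is never used.
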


\begin{proof}
  If $p_0 > p_1$, then for sufficiently large $a$, we have $p_0 a + 1 > p_1 a + N$.
  Note that there exist infinitely many $a \in \ZZ_{\geq 0}$ such that $p_0 a + 1$ is prime by Dirichlet's theorem on arithmetic progressions.
  Hence there exist infinitely many $a \in \ZZ_{\geq 0}$ such that $p_0a+1$ and $p_1a+N$ are relatively prime.

  Assume that $p_0 \leq p_1$.
  We set $p_1 = p_0 q + r$, where $q \in \ZZ_{\geq 1}$ and $0 \leq r < p_0$.
  Then $\mathrm{gcd}(p_0 a + 1, p_1 a + N) = \mathrm{gcd}(p_0 a + 1, r a + N-q)$.
  Note that $(q,r) \neq (N,0)$ by the assumption on $p_0, p_1$ and $N$.
  Hence, by the argument similar to the case of $p_0 > p_1$, we obtain infinitely many $a \in \ZZ_{\geq 0}$ such that $p_0a+1$ and $p_1a+N$ are relatively prime.
\end{proof}

\begin{proof}[Proof of Lemma \ref{lem:transitive_I_1_covering}]
  First note that for every $a \in \ZZ_{\geq 0}$, item i) holds.

  Let $a$ be an element of $\ZZ_{\geq 0}$ and set $d = m_1 \cdots m_k \cdot a + 1$. Let $\rho^{(d)} \colon G \to \Homeo_+^{(d)}(S^1)$ be the lift defined above.
  Recall that $\alpha = e_1 \cdots e_k [h_1, h_2]\cdots [h_{2n-1}, h_{2n}]$ is a stabilizer of $I_o$.
  By tracking the orbit of a point of $\partial I_1$, It is verified that
  \[
    \rot(\rho^{(d)}(\alpha)) = \left( 2n-1+k + \displaystyle\sum_{l = 1}^{k}i_l \middle) \right/ d.
  \]
  Hence $\rho^{(d)}(\alpha)(I_i) = I_j$ if and only if
  \[
    j-i = 2n-1+k+\sum_{l = 1}^{k}i_l \ \ \text{ mod } d.
  \]
  Therefore, it suffices to prove that there exist infinitely many such $d$ such that $d$ and $2n-1+k+\sum_{l = 1}^{k}i_l$ are relatively prime.

  Set $N = k+2n-1, p_0 = m_1 \cdots m_k$ and $p_1 = \sum_{l= 1}^{k} m_1 \cdots \widehat{m_l} \cdots m_k$, where the hat means that the term under the hat is omitted.
  Here we regard $p_1$ as $1$ when $k = 1$.
  Then we have
  \begin{align}\label{align:rel_prime_candidate}
    &p_0 a + 1 = d, \nonumber \\
    &p_1 a + N = 2n-1+k+\sum_{l = 1}^{k}i_l.
  \end{align}
  If $Np_0 \neq p_1$, then applying Lemma \ref{lem:infinitely_many_rel_prime} to \eqref{align:rel_prime_candidate}, we obtain the lemma.
  Hence it suffices to show that $Np_0 \neq p_1$.

  If $k = 1$, then $Np_0 = 2nm_1$ and $p_1 = 1$. Hence $Np_0 \neq p_1$.

  If $k = 2$, then
  \[
    Np_0 - p_1 = (2n+1)m_1m_2 - (m_1 + m_2) = 2nm_1m_2 + (m_1-1)(m_2-1)-1.
  \]
  Since $2nm_1m_2 \geq 0$ and $(m_1-1)(m_2-1) \geq 1$, the equality $Np_0 = p_1$ holds if and only if $n = 0$ and $m_1 = m_2 = 2$.
  Hence, by the assumption on $G$ (that is, the group $G$ is not isomorphic to $\ZZ_2 \ast \ZZ_2$), we have $Np_0 \neq p_1$.

  If $k \geq 3$, then
  \begin{align*}
    Np_0 - p_1 &\geq (k-1)p_0 - p_1 = (k-1)m_1 \cdots m_k - \sum_{l=1}^{k}m_1 \cdots \widehat{m_l} \cdots m_k \\
    &\geq (k-1)m_1 \cdots m_k - \frac{k}{2}m_1 \cdots m_k = \frac{k-2}{2}m_1 \cdots m_k > 0.
  \end{align*}
  Hence we obtain $Np_0 \neq p_1$.
  This completes the proof.
\end{proof}

The following proposition is an analogue of Proposition \ref{prop:co_from_rho}.
\begin{proposition}\label{prop:co_from_trho}
  Let $a$ be an element of $\ZZ_{\geq 0}$ satisfying items i) and ii) in Lemma \ref{lem:transitive_I_1_covering}.
  Then the lift $\rho^{(d)} \colon G \to \Homeo_+^{(d)}(S^1)$ of $\rho$ is a dynamical realization of a circular order $c^{(d)} \in CO(G)$ with basepoint $\widetilde{x_{e_1}}$, and the linear part is the subgroup of $G$ generated by $\alpha^d$.
\end{proposition}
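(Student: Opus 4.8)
The plan is to mirror the proof of Proposition~\ref{prop:co_from_rho}: I will establish the analogues of Lemmas~\ref{lem:transitive_I_o} and~\ref{lem:stabilizer} for the lift $\rho^{(d)}$, and then invoke Theorems~\ref{thm:dyn_real_criterion} and~\ref{thm:linear_part_dyn_description}. Two general facts are used throughout: the covering $p_d$ intertwines the two actions, $p_d\circ\rho^{(d)}(g)=\rho(g)\circ p_d$ for all $g\in G$; and $\rho^{(d)}$ is injective since it lifts the injective homomorphism $\rho$ (Lemma~\ref{lem:inclusion}), so a subgroup of $G$ may be identified with its image.

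First I would identify the minimal set of $\rho^{(d)}$. Set $K=p_d^{-1}(L_{\rho})$. This set is $\rho^{(d)}(G)$-invariant and, being the preimage of a Cantor set under a finite covering, is itself a Cantor set; it is a proper subset of $S^1$ because $L_{\rho}\neq S^1$, so $\rho^{(d)}(G)$ is not minimal, and it has no finite orbit, for otherwise $p_d$ of such an orbit would be a finite $\rho(G)$-orbit, contradicting that $L_{\rho}$ is an exceptional minimal set. Hence $\rho^{(d)}(G)$ has a unique exceptional minimal set $K_0$, and $K_0\subseteq K$ since the minimal set is contained in every nonempty closed invariant set. Since $\tau_d$ commutes with $\rho^{(d)}(G)$, the set $\tau_d(K_0)$ is also the exceptional minimal set, so $\tau_d(K_0)=K_0$; and $p_d(K_0)$ is a nonempty closed $\rho(G)$-invariant subset of the minimal set $L_{\rho}$, hence equals $L_{\rho}$. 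Combining these, $K_0=p_d^{-1}(p_d(K_0))=p_d^{-1}(L_{\rho})=K$. So $K$ is the exceptional minimal set of $\rho^{(d)}$, and its complementary components are precisely the components of the preimages of the components of $S^1\setminus L_{\rho}$; in particular $I_1,\dots,I_d$ are the components lying over $I_o$.

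For the analogue of Lemma~\ref{lem:transitive_I_o}: given a component $J$ of $S^1\setminus K$, its image $p_d(J)$ is a component of $S^1\setminus L_{\rho}$, so by Lemma~\ref{lem:transitive_I_o} there is $g\in G$ with $\rho(g)(p_d(J))=I_o$; then $\rho^{(d)}(g)(J)$ is a component lying over $I_o$, hence equals some $I_i$, and by item~ii) of Lemma~\ref{lem:transitive_I_1_covering} all of $I_1,\dots,I_d$ lie in one $\rho^{(d)}(\langle\alpha\rangle)$-orbit, so $\rho^{(d)}(G)$ acts transitively on the components of $S^1\setminus K$. For the analogue of Lemma~\ref{lem:stabilizer}: if $\rho^{(d)}(g)(I_1)=I_1$ then $\rho(g)(I_o)=I_o$, so $g=\alpha^j$ by Lemma~\ref{lem:stabilizer}; and by the rotation-number computation in the proof of Lemma~\ref{lem:transitive_I_1_covering}, $\rho^{(d)}(\alpha)$ permutes $\{I_1,\dots,I_d\}$ as a $d$-cycle because the shift $2n-1+k+\sum_l i_l$ is coprime to $d$ (this is exactly what item~ii) there arranges), so $\rho^{(d)}(\alpha^j)$ fixes $I_1$ if and only if $d\mid j$. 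Thus the stabilizer of $I_1$ is $\langle\alpha^d\rangle$. Moreover $p_d|_{I_1}\colon I_1\to I_o$ is an orientation-preserving homeomorphism sending $\widetilde{x_{e_1}}$ to $x_{e_1}$ and conjugating $\rho^{(d)}(\alpha^d)|_{I_1}$ to $\rho(\alpha^d)|_{I_o}$; by Lemma~\ref{lem:stabilizer} the latter is the restriction to the finite-index subgroup $\langle\alpha^d\rangle$ of the dynamical realization of a linear order on $\langle\alpha\rangle\cong\ZZ$ with basepoint $x_{e_1}$, and such a restriction is again a dynamical realization of a linear order with the same basepoint, so $\langle\alpha^d\rangle$ acts on $I_1$ as the dynamical realization of a linear order with basepoint $\widetilde{x_{e_1}}$.

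Finally, Theorem~\ref{thm:dyn_real_criterion} applied to $\rho^{(d)}$, $K$, $I_1$ and $x_0=\widetilde{x_{e_1}}$ shows that $\rho^{(d)}$ is the dynamical realization of a circular order $c^{(d)}\in CO(G)$ based at $\widetilde{x_{e_1}}$, and Theorem~\ref{thm:linear_part_dyn_description} identifies its linear part with the stabilizer of $I_1$, namely the subgroup generated by $\alpha^d$. I expect the main obstacle to be the first step --- pinning down that the exceptional minimal set of the lifted action is exactly $p_d^{-1}(L_{\rho})$ --- together with the elementary but not wholly trivial point that restricting the dynamical realization of a linear order on $\ZZ$ to a finite-index subgroup remains such a realization; the remaining steps are a direct transcription of the constructions of Section~\ref{sec:3} through $p_d$, using Lemma~\ref{lem:transitive_I_1_covering}.
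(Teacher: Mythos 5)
Your proposal is correct and follows essentially the same route as the paper: identify $p_d^{-1}(L_\rho)$ as the exceptional minimal set, use Lemma \ref{lem:transitive_I_o} together with item ii) of Lemma \ref{lem:transitive_I_1_covering} to get transitivity on gaps and the stabilizer $\langle\alpha^d\rangle$ of $I_1$, then apply Theorems \ref{thm:dyn_real_criterion} and \ref{thm:linear_part_dyn_description}. You merely supply details the paper leaves implicit (the uniqueness argument pinning down the minimal set, and the check that $\langle\alpha^d\rangle$ acts on $I_1$ as a dynamical realization of a linear order), both of which are sound.
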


\begin{proof}
  By ii) of Lemma \ref{lem:transitive_I_1_covering}, the stabilizer of $I_1$ is the cyclic subgroup of $G$ generated by $\alpha^d$.
  Note that the inverse image $p_d^{-1}(L_{\rho})$ of the exceptional minimal set $L_{\rho}$ of $\rho$ gives rise to an exceptional minimal set of the lift $\rho^{(d)}$.

  To use Theorem \ref{thm:dyn_real_criterion}, we prove that for every connected component $J$ of $S^1 \setminus p_d^{-1}(L_{\rho})$, there exists $g \in G$ such that $I_1 = \rho^{(d)}(g)(J)$.
  Let $J$ be a connected component of $S^1 \setminus p_d^{-1}(L_{\rho})$.
  Then $p_d(J)$ is a connected component of $S^1 \setminus L_{\rho}$.
  Hence, by Lemma \ref{lem:transitive_I_o}, there exists $g' \in G$ such that $I_o = \rho(g')(p_d(J))$, which implies that there exists $i \in [d]$ such that $I_i = \rho^{(d)}(g')(J)$.
  By Lemma \ref{lem:infinitely_many_rel_prime}, there exists $j \in [d]$ such that $I_i = \rho^{(d)}(\alpha^j)(I_1)$.
  Hence we have $I_1 = \rho^{(d)}(\alpha^{-j})(I_i) = \rho^{(d)}(\alpha^{-j}g')(J)$.

  Applying Theorems \ref{thm:dyn_real_criterion} and \ref{thm:linear_part_dyn_description} to $\rho^{(d)}$, we obtain the proposition.
\end{proof}

Take attracting domains $D(\rho(s))$ for $s \in S_1 \cup S_2 \cup S_1^{-1} \cup S_2^{-1}$ as in Proposition \ref{prop:ping_pong_fin_ind}.
Let $d$ be an integer satisfying items i) and ii) in Lemma \ref{lem:transitive_I_1_covering} and set $D(\rho^{(d)}(s)) = p_d^{-1}(D(\rho(s)))$.
Since $p_d \colon S^1 \to S^1$ is the $d$-fold covering and $\rho^{(d)}$ is a lift of $\rho$, we have the following.
\begin{lemma}
  The closed sets $D(\rho^{(d)}(s)) = p_d^{-1}(D(\rho(s)))$ are attracting domains for $s \in S_1 \cup S_2 \cup S_1^{-1} \cup S_2^{-1}$ such that the elements
  \[
    \{ D(\rho^{(d)}(s)) \mid s \in S_1 \cup S_2 \cup S_1^{-1} \cup S_2^{-1} \} \cup \{ \rho^{(d)}(e_k^{j_k} \cdots e_1^{j_1})(\widetilde{x_{e_1}}) \mid (j_1, \ldots j_k) \in [m_1] \times \cdots \times [m_k] \}
  \]
  are pairwise disjoint.
\end{lemma}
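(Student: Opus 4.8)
The plan is to transport everything along the $d$-fold covering $p_d \colon S^1 \to S^1$; the argument is purely formal once two elementary facts are in place. First, since $p_d$ is a covering map, the operation $p_d^{-1}$ commutes with complements, unions and intersections, carries closed sets to closed sets, and carries pairwise disjoint families of subsets to pairwise disjoint families. Second, because $\rho^{(d)}(g)$ was constructed as a lift of $\rho(g)$, we have $p_d \circ \rho^{(d)}(g) = \rho(g) \circ p_d$ for every $g \in G$, and hence
\[
  p_d^{-1}\bigl(\rho(g)(A)\bigr) = \rho^{(d)}(g)\bigl(p_d^{-1}(A)\bigr)
\]
for every subset $A \subset S^1$ and every $g \in G$.

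Using these, I would first verify that the sets $D(\rho^{(d)}(s)) = p_d^{-1}(D(\rho(s)))$, for $s \in S_1 \cup S_2 \cup S_1^{-1} \cup S_2^{-1}$, are attracting domains for $\rho^{(d)}(s)$. By Proposition \ref{prop:ping_pong_fin_ind} one has $\rho(s)\bigl(S^1 \setminus D(\rho(s^{-1}))\bigr) \subset D(\rho(s))$; applying $p_d^{-1}$ and the two facts above yields
\[
  \rho^{(d)}(s)\bigl(S^1 \setminus D(\rho^{(d)}(s^{-1}))\bigr) = p_d^{-1}\bigl(\rho(s)(S^1 \setminus D(\rho(s^{-1})))\bigr) \subset p_d^{-1}\bigl(D(\rho(s))\bigr) = D(\rho^{(d)}(s)),
\]
and each $D(\rho^{(d)}(s))$ is closed. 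This also records that these lifted domains, being preimages of pairwise disjoint sets, are themselves pairwise disjoint, as the definition of ping-pong dynamics requires.

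For the full disjointness statement I would recall that Proposition \ref{prop:ping_pong_fin_ind} asserts the family $\{ D(\rho(s)) \} \cup \{ e_k^{j_k} \cdots e_1^{j_1}(x_{e_1}) \}$ (with $s$ ranging over $S_1 \cup S_2 \cup S_1^{-1} \cup S_2^{-1}$ and $(j_1,\ldots,j_k)$ over $[m_1]\times\cdots\times[m_k]$) is pairwise disjoint; in particular the orbit points attached to distinct tuples are distinct points. Hence the family of their $p_d$-preimages is again pairwise disjoint. Now $D(\rho^{(d)}(s)) = p_d^{-1}(D(\rho(s)))$, and since $p_d(\widetilde{x_{e_1}}) = x_{e_1}$ the displayed identity gives $\rho^{(d)}(e_k^{j_k}\cdots e_1^{j_1})(\widetilde{x_{e_1}}) \in p_d^{-1}\bigl(e_k^{j_k}\cdots e_1^{j_1}(x_{e_1})\bigr)$. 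Thus every set $D(\rho^{(d)}(s))$ and every point $\rho^{(d)}(e_k^{j_k}\cdots e_1^{j_1})(\widetilde{x_{e_1}})$ is contained in a member of this pairwise disjoint family of preimages, and distinct members of the asserted collection lie in distinct such preimages; the required pairwise disjointness follows immediately.

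I do not expect a genuine obstacle: the whole content is bookkeeping with preimages under $p_d$. The only steps needing a moment's attention are the lift identity $p_d \circ \rho^{(d)}(g) = \rho(g)\circ p_d$, which holds precisely because $\rho^{(d)}$ was defined as a lift of $\rho$, and the observation that distinct tuples in $[m_1]\times\cdots\times[m_k]$ yield distinct base orbit points $e_k^{j_k}\cdots e_1^{j_1}(x_{e_1})$ and therefore disjoint fibres.
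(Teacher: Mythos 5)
Your proof is correct and is exactly the argument the paper intends: the paper dispenses with this lemma in one sentence (``Since $p_d$ is the $d$-fold covering and $\rho^{(d)}$ is a lift of $\rho$\ldots''), and your write-up simply makes explicit the two facts that sentence relies on, namely the intertwining identity $p_d \circ \rho^{(d)}(g) = \rho(g)\circ p_d$ and the compatibility of $p_d^{-1}$ with complements, closures and disjointness. No gaps.
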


The following is an analogue of Lemma \ref{lem:neighborhood_construction}.
\begin{lemma}
  Let $a$ be an element of $\ZZ_{\geq 0}$ satisfying items i) and ii) in Lemma \ref{lem:transitive_I_1_covering}.
  Then there exists a neighborhood $U$ of $\rho^{(d)}$ in $\Hom(G, \Homeo_+(S^1))$ such that for every $\sigma \in U$, the following hold;
  \begin{enumerate}[$(1)$]
    \item $\sigma$ acts on $\widetilde{x_{e_1}}$ freely,
    \item the circular order of $\{ \sigma(g)(\widetilde{x_{e_1}}) \}_{g \in G}$ coincides with one of $\{ \rho^{(d)}(g)(\widetilde{x_{e_1}}) \}_{g \in G}$.
  \end{enumerate}
\end{lemma}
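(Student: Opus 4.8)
The plan is to repeat the proof of Lemma~\ref{lem:neighborhood_construction} almost verbatim, with $\rho$, $x_{e_1}$ and the attracting domains $D(\rho(s))$ replaced throughout by $\rho^{(d)}$, $\widetilde{x_{e_1}}$ and the attracting domains $D(\rho^{(d)}(s)) = p_d^{-1}(D(\rho(s)))$ produced in the lemma above. As a complete set of representatives of the right cosets $F\backslash G$ I would use $\Gamma = \{\, e_k^{j_k}\cdots e_1^{j_1} \mid (j_1,\ldots,j_k) \in [m_1]\times\cdots\times[m_k] \,\}$; this family is complete by Lemma~\ref{lem:sort_2}, and since $[G:F] = m_1\cdots m_k = |\Gamma|$ the reduction map from $\Gamma$ onto $F\backslash G$ is a bijection. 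These elements play the role of the $\gamma_j$'s in Lemma~\ref{lem:data_det_cyclic_order}, while $S_1\cup S_2$ is the free generating set of $F$.

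First I would fix $\epsilon>0$. By the lemma above, the closed sets $\{\, D(\rho^{(d)}(s)) \mid s \in S_1\cup S_2\cup S_1^{-1}\cup S_2^{-1} \,\}$ together with the points $\{\, \rho^{(d)}(\gamma)(\widetilde{x_{e_1}}) \mid \gamma \in \Gamma \,\}$ are pairwise disjoint, hence their open $\epsilon$-neighbourhoods are still pairwise disjoint once $\epsilon$ is small enough. Then, setting $S = S_1\cup S_2\cup S_1^{-1}\cup S_2^{-1}\cup \Gamma$, I would take
\[
  U = \left\{ \sigma \in \Hom(G, \Homeo_+(S^1)) \, \middle| \, \sup_{x \in S^1} \sup_{g \in S} d(\rho^{(d)}(g)(x), \sigma(g)(x)) < \epsilon \right\},
\]
where $d$ is the standard metric on $S^1$; this is an open neighbourhood of $\rho^{(d)}$ in $\Hom(G,\Homeo_+(S^1))$ (the elements of $U$ need not lie in $\Homeo_+^{(d)}(S^1)$, which is irrelevant for the argument).

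For each $\sigma \in U$ I would then produce attracting domains for the free generators $\sigma(s)$, $s \in S_1\cup S_2$, as suitable $\epsilon$-perturbations of $D(\rho^{(d)}(s))$: fattening the domains $D(\rho^{(d)}(s))$ for $s \in S_1\cup S_2$ and taking closures of complements of the fattened domains for their inverses, exactly as in Proposition~\ref{prop:ping_pong_fin_ind}. The choice of $\epsilon$ guarantees that these domains, together with the points $\sigma(\gamma)(\widetilde{x_{e_1}})$ for $\gamma \in \Gamma$, are pairwise disjoint, so the hypotheses of Lemma~\ref{lem:data_det_cyclic_order} are met with $x_0 = \widetilde{x_{e_1}}$ and coset representatives $\Gamma$. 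Moreover, every containment $\sigma(s)(D_{l'}(t)) \subset D_l(s)$ and $\sigma(s)(\sigma(\gamma)(x_0)) \in D_l(s)$, and every cyclic-order relation among the components $D_l(\sigma(s))$ and the points $\sigma(\gamma)(x_0)$, is an open condition satisfied by $\rho^{(d)}$, and therefore still holds for every $\sigma \in U$ since $\sigma$ is $\epsilon$-close to $\rho^{(d)}$ on $S$. Hence the data (1) and (2) of Lemma~\ref{lem:data_det_cyclic_order} for $\sigma$ coincide with those for $\rho^{(d)}$, and Lemma~\ref{lem:data_det_cyclic_order} then gives that $\sigma$ acts freely at $\widetilde{x_{e_1}}$ and that the circular order of $\{\sigma(g)(\widetilde{x_{e_1}})\}_{g\in G}$ coincides with one of $\{\rho^{(d)}(g)(\widetilde{x_{e_1}})\}_{g\in G}$.

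The only step carrying any content is the persistence of the combinatorial data under the perturbation, but this is precisely the bookkeeping already carried out in \cite[Lemma~4.2]{MR3887426} and in the proof of Lemma~\ref{lem:neighborhood_construction}; and since $p_d \colon S^1 \to S^1$ is a local homeomorphism, the preimage domains $p_d^{-1}(D(\rho(s)))$ behave exactly as the domains $D(\rho(s))$ downstairs, so nothing new is needed in the covering setting. I therefore anticipate no real obstacle: the final write-up should be essentially a copy of the proof of Lemma~\ref{lem:neighborhood_construction}, with the substitutions indicated above.
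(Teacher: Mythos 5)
Your proposal is correct and follows exactly the route the paper intends: the paper omits this proof, stating it is essentially the same as that of Lemma \ref{lem:neighborhood_construction}, and your write-up is precisely that argument with $\rho$, $x_{e_1}$, $D(\rho(s))$ replaced by $\rho^{(d)}$, $\widetilde{x_{e_1}}$, $p_d^{-1}(D(\rho(s)))$, using the preceding unnumbered lemma for the disjointness of the lifted attracting domains. The added bookkeeping about the coset representatives $\Gamma$ being a complete set for $F\backslash G$ is accurate and consistent with Lemma \ref{lem:sort_2} and the corollary $[G:F]=m_1\cdots m_k$.
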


We omit the proof since it is essentially the same as that of Lemma \ref{lem:neighborhood_construction}.
As a corollary, we obtain the following.
\begin{corollary}\label{cor:isolation_cd}
  Let $a$ be an element of $\ZZ_{\geq 0}$ satisfying items i) and ii) in Lemma \ref{lem:transitive_I_1_covering}.
  Then the circular order $c^{(d)}$ is isolated.
\end{corollary}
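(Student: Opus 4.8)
The plan is to mimic the proof of Proposition~\ref{prop:isolated} almost verbatim, with $\rho^{(d)}$, $\widetilde{x_{e_1}}$ and $c^{(d)}$ playing the roles of $\rho$, $x_{e_1}$ and $c$. By Proposition~\ref{prop:co_from_trho}, the lift $\rho^{(d)}$, regarded as a homomorphism $G \to \Homeo_+(S^1)$ via the inclusion $\Homeo_+^{(d)}(S^1) \hookrightarrow \Homeo_+(S^1)$, is a dynamical realization of $c^{(d)}$ based at $\widetilde{x_{e_1}}$; in particular the orbit $\rho^{(d)}(G)\cdot\widetilde{x_{e_1}}$ recovers $c^{(d)}$.

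First I would apply the lemma immediately preceding this corollary (the covering-space analogue of Lemma~\ref{lem:neighborhood_construction}, obtained from the attracting domains $p_d^{-1}(D(\rho(s)))$ together with Lemma~\ref{lem:data_det_cyclic_order}) to produce an open neighborhood $U$ of $\rho^{(d)}$ in $\Hom(G,\Homeo_+(S^1))$ such that every $\sigma \in U$ acts freely at $\widetilde{x_{e_1}}$ and the circular order of $\{\sigma(g)(\widetilde{x_{e_1}})\}_{g\in G}$ coincides with that of $\{\rho^{(d)}(g)(\widetilde{x_{e_1}})\}_{g\in G}$. Next I would feed $c^{(d)}$, its dynamical realization $\rho^{(d)}$ based at $\widetilde{x_{e_1}}$, and the neighborhood $U$ into Theorem~\ref{thm:dyn_real_cont}; this yields a neighborhood $V$ of $c^{(d)}$ in $CO(G)$ such that each $c' \in V$ admits a dynamical realization $\sigma_{c'}$ based at $\widetilde{x_{e_1}}$ with $\sigma_{c'} \in U$. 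Finally, since $\sigma_{c'} \in U$, the circular order of $\sigma_{c'}(G)\cdot\widetilde{x_{e_1}}$ equals that of $\rho^{(d)}(G)\cdot\widetilde{x_{e_1}}$, namely $c^{(d)}$; but because $\sigma_{c'}$ is a dynamical realization of $c'$, this same orbit recovers $c'$, forcing $c' = c^{(d)}$. Hence $V = \{c^{(d)}\}$ and $c^{(d)}$ is isolated.

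I expect no real obstacle: the argument is a formal repetition of the proof of Proposition~\ref{prop:isolated}, and all of its ingredients have already been assembled in this subsection. The only points requiring a word of care are the identification of $\Homeo_+^{(d)}(S^1)$ with a subgroup of $\Homeo_+(S^1)$, so that the dynamical-realization machinery of Mann--Rivas applies to $\rho^{(d)}$ directly, and the standard fact, used implicitly throughout, that the orbit of the basepoint under a dynamical realization of a circular order reproduces that order; both are routine.
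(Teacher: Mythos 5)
Your proposal is correct and follows exactly the route the paper intends: it combines the covering-space analogue of Lemma \ref{lem:neighborhood_construction} with Theorem \ref{thm:dyn_real_cont}, precisely as in the proof of Proposition \ref{prop:isolated}, which is why the paper states the result as an immediate corollary without further argument. The two points of care you flag (viewing $\Homeo_+^{(d)}(S^1)$ inside $\Homeo_+(S^1)$ and the fact that the basepoint orbit of a dynamical realization recovers the order) are indeed the only things to check, and both are routine.
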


We now prove that each isolated circular order $c^{(d)}$ is not an automorphic image of the others, which implies Theorem \ref{thm:main}.
To prove this, we need the following.

\begin{lemma}\label{lem:linear_part_auto_image}
  Let $\GG$ be a group and $c$ a circular order on $\GG$.
  Let $\GG_0$ be the linear part of $c$.
  Then for every automorphism $\phi$ of $\GG$, the linear part of the induced order $c_{\phi}$ is $\phi^{-1}(\GG_0)$.
\end{lemma}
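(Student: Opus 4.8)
The plan is to prove the statement by showing that relabelling by $\phi$ sets up an inclusion-preserving bijection between convex subgroups. Concretely, I claim that for a subgroup $H$ of $\GG$, the subgroup $H$ is convex for $c_{\phi}$ if and only if $\phi(H)$ is convex for $c$. Granting this, the map $H \mapsto \phi(H)$ is an isomorphism of posets (ordered by inclusion) from the $c_{\phi}$-convex subgroups onto the $c$-convex subgroups, with inverse $H' \mapsto \phi^{-1}(H')$. Since each poset has a unique maximal element by \cite[Lemma 3.15]{MR3887426} (namely the linear part), the maximal $c_{\phi}$-convex subgroup is the image of $\GG_0$ under $\phi^{-1}$, i.e. $\phi^{-1}(\GG_0)$; by definition this is the linear part of $c_{\phi}$.

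To establish the claim I would verify the two defining conditions of convexity separately, using only the identity $c_{\phi}(g_1,g_2,g_3)=c(\phi(g_1),\phi(g_2),\phi(g_3))$ together with $\phi(1)=1$. For the first condition, note that $\phi$ restricts to a group isomorphism $H \to \phi(H)$, and $c_{\phi}|_{H^3}$ is the pullback of $c|_{\phi(H)^3}$ along this isomorphism. Being a coboundary (equivalently, inducing a left order) is preserved under pullback along a group isomorphism: if $c|_{\phi(H)^3}(f,g,h)=c'(g,h)-c'(f,h)+c'(f,g)$ with $c'$ left invariant on $\phi(H)$, then $c''(g,h):=c'(\phi(g),\phi(h))$ is left invariant on $H$ and exhibits $c_{\phi}|_{H^3}$ as a coboundary, and conversely. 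Hence the first condition holds for $(c_{\phi},H)$ if and only if it holds for $(c,\phi(H))$.

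For the second condition, suppose $h_1,h_2 \in H$ and $g \in \GG$ with $c_{\phi}(h_1,g,h_2)=+1$ and $c_{\phi}(h_1,1,h_2)=+1$. Writing $h_1'=\phi(h_1)$, $h_2'=\phi(h_2)$, $g'=\phi(g)$ and using $\phi(1)=1$, this says exactly $c(h_1',g',h_2')=+1$ and $c(h_1',1,h_2')=+1$ with $h_1',h_2' \in \phi(H)$ and $g' \in \GG$. As $(h_1,h_2,g)$ ranges over $H \times H \times \GG$ the triple $(h_1',h_2',g')$ ranges over $\phi(H) \times \phi(H) \times \GG$, and the conclusion $g \in H$ is equivalent to $g' \in \phi(H)$ since $\phi$ is bijective. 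Therefore the second condition for $(c_{\phi},H)$ is literally the second condition for $(c,\phi(H))$, completing the proof of the claim.

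I do not expect a genuine obstacle here: the whole argument is a bookkeeping exercise in transporting the definition of convexity through the automorphism $\phi$. The only point needing a (very short) verification rather than a one-line observation is that the coboundary condition on the restricted cocycle is preserved under pullback along the group isomorphism $\phi|_H \colon H \to \phi(H)$; everything else is immediate from the formula defining $c_{\phi}$, the identity $\phi(1)=1$, and the bijectivity of $\phi$.
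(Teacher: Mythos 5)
Your proof is correct, but it takes a genuinely different route from the paper. You work purely combinatorially with the definition of convexity: you check that both defining conditions (the coboundary condition on the restriction and the betweenness condition) transport along $\phi$, so that $H$ is $c_{\phi}$-convex if and only if $\phi(H)$ is $c$-convex, and then you invoke the uniqueness of the maximal convex subgroup from \cite[Lemma 3.15]{MR3887426} to conclude that the linear part of $c_{\phi}$ is $\phi^{-1}(\GG_0)$. The paper instead argues dynamically: it observes that the dynamical realization of $c_{\phi}$ (for the enumeration $\{\phi(\gg_i)\}$) is $\sigma\circ\phi$, that this has the same exceptional minimal set $K$ as $\sigma$, and then applies Theorem \ref{thm:linear_part_dyn_description} to identify both linear parts with stabilizers of the gap of $K$ containing the basepoint, whence $\mathrm{Stab}_{\sigma\circ\phi}(I)=\phi^{-1}(\mathrm{Stab}_{\sigma}(I))$. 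Your argument is more elementary and strictly more general: the paper's proof implicitly assumes the dynamical realization admits an exceptional minimal set (the hypothesis of Theorem \ref{thm:linear_part_dyn_description}), which holds for the orders constructed in the paper but not for an arbitrary circular order, whereas your transport-of-structure argument needs no such assumption and avoids the (mildly delicate) identification of the realization of $c_{\phi}$ with $\sigma\circ\phi$. What the paper's approach buys is consistency with its overall dynamical framework, where the linear part is anyway only ever used via its description as a stabilizer.
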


\begin{proof}
  We fix an enumeration $\GG = \{ \gg_i \}_{i \geq 0}$ with $\gg_0 = 1$.
  Let $\sigma$ be the dynamical realization of $c$ based at $x_0$ with respect to the enumeration above.
  Let $\sigma_{\phi}$ be the dynamical realization of $c_{\phi}$ based at $x_0$ with respect to the enumeration $\GG = \{ \phi(\gg_i) \}_{i \geq 0}$.
  By the definition of the dynamical realization, we have $\sigma_{\phi} = \sigma \circ \phi$.
  Hence the exceptional minimal set $K$ of $\sigma_{\phi}$ coincides with one of $\sigma$.

  Let $I$ be the connected component of $S^1 \setminus K$ containing $x_0$.
  By Theorem \ref{thm:linear_part_dyn_description}, the linear part of $c$ (resp. of $c_{\phi}$) is the stabilizer $\mathrm{Stab}_{\sigma}(I)$ (resp. $\mathrm{Stab}_{\sigma_{\phi}}(I)$) of $I$ with respect to $\sigma$ (resp. $\sigma_{\phi}$).
  Since $\mathrm{Stab}_{\sigma_{\phi}}(I) = \phi^{-1}(\mathrm{Stab}_{\sigma}(I)) = \phi^{-1}(\GG_0)$, we obtain the lemma.
\end{proof}

\begin{proof}[Proof of Theorem \ref{thm:main}]
  By Lemma \ref{lem:infinitely_many_rel_prime} and Corollary \ref{cor:isolation_cd}, it suffices to prove that each isolated circular order $c^{(d)}$ is not an automorphic image of the others.

  Let $d$ and $d'$ be integers satisfying items i) and ii) of Lemma \ref{lem:infinitely_many_rel_prime}.
  Then the linear part of $c^{(d)}$ (resp. $c^{(d')}$) is the cyclic subgroup of $G$ generated by $\alpha^d$ (resp. $\alpha^{d'}$) by Proposition \ref{prop:co_from_trho}.
  Assume that there exists an automorphism $\phi$ of $G$ such that $c^{(d)} = (c^{(d')})_{\phi}$.
  Then Lemma \ref{lem:linear_part_auto_image} implies that the cyclic subgroup $\langle \alpha^d \rangle$, which is the linear part of $c^{(d)}$, coincides with $\phi^{-1} (\langle \alpha^{d'} \rangle)$.
  Hence we have either $\phi(\alpha)^d = \alpha^{d'}$ or $\phi(\alpha)^d = \alpha^{-d'}$.

  Note that $\alpha$ is a primitive element of $G$.
  Hence so is $\phi(\alpha)$.
  Since $G$ can be embedded into $\Isom_+(\HH^2)$ as a discrete subgroup, every element of $G$ of infinite order is a multiple of a unique primitive element.
  Hence, $\phi(\alpha)^d = \alpha^{\pm d'}$ implies that $\phi(\alpha) = \alpha^{\pm 1}$ and $d = d'$.
  This completes the proof.
\end{proof}

\section{Isolated left orders}
In this section, we prove Theorem \ref{thm:main_left}, which states that the group
\[
  \hG = \langle e_1, \ldots, e_k, h_1, \ldots, h_{2n}, z \mid e_1^{m_1} = \cdots e_k^{m_k} = z, [h_1, z] = \cdots = [h_{2n}, z] = 1 \rangle
\]
admits countably many isolated left orders which are not the automorphic images of the others.

\subsection{Isolated left orders on a central $\ZZ$-extension}
In this subsection, we recall Mann--Rivas's criterion (\cite{MR3887426}) for certain left orders on a central $\ZZ$-extension to be isolated in terms of circular orders.

Let $<$ be a left order on a group $\GG$.
A subgroup $\Lambda$ of $\GG$ is said to be \emph{cofinal for} $<$ if for every $\gg \in \GG$, there exist $\lambda_1, \lambda_2 \in \Lambda$ such that $\lambda_1 < \gg < \lambda_2$.
Let $LO_{\Lambda}(\GG)$ be the set of left orders on $\GG$ for which the subgroup $\Lambda$ is cofinal.

\begin{theorem}[{\cite[Proposition 5.1]{MR3887426}}]\label{thm:circ_order_lift_Z_ext}
  Let $\GG$ be a group and $0 \to \ZZ \to \hGG \to \GG \to 1$ a central $\ZZ$-extension of $\GG$.
  Then, there is a continuous map $\pi_{\hGG}^* \colon LO_{\ZZ}(\hGG)\to CO(\GG)$.
  Moreover, each circular order on $\GG$ is in the image of one such map $\pi_{\hGG}^*$.
\end{theorem}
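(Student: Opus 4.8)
The plan is to realize $\pi_\hGG^*$ through a fundamental-domain section. Fix $<\,\in LO_\ZZ(\hGG)$ and let $z$ be the generator of the central $\ZZ\le\hGG$ with $1<z$ (after possibly replacing $z$ by $z^{-1}$; this makes sense since cofinality of $\langle z\rangle$ forces $z\neq 1$). Left invariance gives $z^{n}<z^{n+1}$ for all $n$, and cofinality of $\langle z\rangle$ then yields the partition $\hGG=\bigsqcup_{n\in\ZZ}[z^{n},z^{n+1})$; in particular each $g\in\GG=\hGG/\langle z\rangle$ has a unique lift $s(g)\in[1,z)$, and $g\mapsto s(g)$ is an injection with $s(1)=1$. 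For distinct $g_1,g_2,g_3\in\GG$ the points $s(g_1),s(g_2),s(g_3)\in[1,z)$ are distinct, and I define $\pi_\hGG^*(<)(g_1,g_2,g_3)$ to be $+1$ if, reading $[1,z)$ cyclically with its two ends identified, they appear in the order $s(g_1),s(g_2),s(g_3)$; to be $-1$ if they appear in the reverse cyclic order; and to be $0$ if two of the $g_i$ coincide.

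Next I would verify that $c:=\pi_\hGG^*(<)$ is a circular order. Axiom (i) is immediate. For the cocycle identity (ii), observe that any finite subset of the cyclically ordered set $[1,z)/(1\sim z)$ embeds, respecting cyclic order, into $S^1$; so for each quadruple (ii) reduces to the standard fact that $\mathrm{ord}\colon (S^1)^3\to\{0,\pm1\}$ satisfies the four-term relation, and this also disposes of the degenerate cases where some $g_i$ agree. Axiom (iii) is the heart of the matter. Given $g\in\GG$, choose a lift $\tilde g\in\hGG$; left translation $L_{\tilde g}$ is an order-preserving bijection of $(\hGG,<)$, and since $z$ is central it commutes with the $\langle z\rangle$-action and hence carries each window $[p,zp)$ order-isomorphically onto $[\tilde g p,\,z\tilde g p)$. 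Tracking representatives through these windows shows that the bijection of $\hGG/\langle z\rangle$ induced by $L_{\tilde g}$ preserves cyclic order and is independent of the chosen lift $\tilde g$; under the identification $\GG\cong\hGG/\langle z\rangle$, $g\mapsto[s(g)]$, this induced bijection is exactly left multiplication by $g$, so left multiplication preserves $c$, which is (iii). Centrality of $\ZZ$ is used precisely here.

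For continuity, fix $<\,\in LO_\ZZ(\hGG)$, write $c=\pi_\hGG^*(<)$, and let $S\subset\GG$ be finite. Set $T=\{1,z,z^{-1}\}\cup\{s(g):g\in S\}$, a finite subset of $\hGG$ (the elements $s(g)$ are completely determined by $<$). If $<'\in LO_\ZZ(\hGG)$ agrees with $<$ on every pair of elements of $T$, then its positive central generator is again $z$, and for each $g\in S$ one has $s(g)\in[1,z)_{<'}$, so $s(g)$ is the fundamental-domain lift of $g$ for $<'$ as well; since moreover $<'$ agrees with $<$ on the pairs $s(g_i),s(g_j)$, the cyclic order of $s(g_i),s(g_j),s(g_k)$ is the same for $<'$ and for $<$, i.e. $\pi_\hGG^*(<')$ and $c$ agree on $S^3$. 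The set of such $<'$ is a neighborhood of $<$ in $LO_\ZZ(\hGG)$ — it is cut out by prescribing the sign function $\lambda$ on the finite set $\{t^{-1}t':t,t'\in T,\ t\neq t'\}$ — so $\pi_\hGG^*$ is continuous.

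Finally, to see that every $c\in CO(\GG)$ lies in some such image, take (using countability of $\GG$, which holds throughout this paper) a dynamical realization $\rho=\rho_c\colon\GG\to\Homeo_+(S^1)$ based at $x_0$, and let $\hGG$ be the pullback along $\rho$ of the central extension $\ZZ\to\widetilde{\Homeo}_+(S^1)\to\Homeo_+(S^1)$, where $\widetilde{\Homeo}_+(S^1)$ is the group of homeomorphisms of $\RR$ commuting with integer translations; this is a central $\ZZ$-extension of $\GG$. It acts on $\RR$ via its $\widetilde{\Homeo}_+(S^1)$-coordinate, with the central $\ZZ$ acting by integer translations. Since $\rho$ is free at $x_0$, this action is free at any lift $\tilde x_0\in\RR$ of $x_0$, so the orbit $\hGG\cdot\tilde x_0\subset\RR$ defines a left order $<$ on $\hGG$ for which the central $\ZZ$ is cofinal; unwinding the construction of $\pi_\hGG^*$, one checks that $\pi_\hGG^*(<)$ is the cyclic order of the orbit $\rho(\GG)\cdot x_0$ on $S^1=\RR/\ZZ$, which recovers $c$. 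The main obstacle in this argument is the careful verification of axiom (iii) — that the fundamental-domain recipe really yields a left-invariant cocycle on $\GG$ — which is where centrality of $\ZZ$ and the window-tracking bookkeeping are indispensable; the remaining points are routine.
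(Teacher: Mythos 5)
Your construction is exactly the one the paper (following Mann--Rivas \cite[Proposition 5.1]{MR3887426}) sketches after the theorem statement: the fundamental-domain lift $s(g)\in[1,z)$ giving the cyclic order, and surjectivity via the pullback of $\ZZ\to\widetilde{\Homeo}_+(S^1)\to\Homeo_+(S^1)$ along a dynamical realization. The verifications you supply (left-invariance via centrality and window-tracking, continuity via the finite set $T$) are correct, so this is a sound, fleshed-out version of the same argument the paper cites.
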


The map $\pi_{\hGG}^* \colon LO_{\ZZ}(\hGG) \to CO(\GG)$ is defined as follows.
Let $<$ be an element of $LO_{\ZZ}(\hGG)$.
Let $z$ be the generator of $\ZZ \subset \hGG$ satisfying $1 < z$.
Since $\ZZ$ is cofinal for $<$, for every $\gg \in \GG$, there exists a unique lift $\hgg \in \hGG$ of $\gg$ such that $1 \leq \hgg \leq z$.
For every distinct elements $\gg_1, \gg_2, \gg_3 \in \GG$, let $\sigma$ be the permutation such that $1 \leq \hgg_{\sigma(1)} < \hgg_{\sigma(2)} < \hgg_{\sigma(3)} < z$.
The map $\pi_{\hGG}^*$ is defined by $\pi_{\hGG}^*(<)(\gg_1, \gg_2, \gg_3) = \mathrm{sign}(\sigma)$.

For a countable group $\GG$, the central extension $\hGG$ in the last statement of Theorem \ref{thm:circ_order_lift_Z_ext} is given as follows.
Let $c$ be a circular order on $\GG$.
Let $\rho$ be a dynamical realization of $c$ based at $x$.
Let $T \colon \RR \to \RR$ be the translation by one and $\tHomeo_+(S^1)$ the group of homeomorphisms of $\RR$ which commute with $T$.
Note that the group $\tHomeo_+(S^1)$ gives rise to a central $\ZZ$-extension of $\Homeo_+(S^1)$.
Then the group $\hGG$ is given as the pullback of $\tHomeo_+(S^1)$.

Take a lift $\hat{x} \in \RR$ of $x$.
Then $\hat{x}$ has a free orbit under $\hGG\subset \Homeo(\RR)$, which induces
a left order $<$ on $\hGG$ for which $\ZZ$ is cofinal.
This left order $<$ satisfies $\pi_{\hGG}^*(<) = c$.
(See \cite{MR3887426} for details.)

The isolation of certain left orders on $\hGG$ can be detected by the isolation of the induced circular orders on $\GG$ as follows.
\begin{theorem}[{\cite[Proposition 5.4]{MR3887426}}]\label{thm:criterion_isol_left_order}
  Assume that $\GG$ is finitely generated and $c$ is an isolated circular order on $G$.
  If $0 \to \ZZ \to \hat{\GG} \to \GG \to 1$ is a central extension and $< \in LO_{\ZZ}(\hGG)$ is a left order such that $\pi_{\hGG}^*(<) = c$, then $<$ is isolated in $LO(\hGG)$.
\end{theorem}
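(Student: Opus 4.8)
The strategy is to show that $<$ is pinned down by its restriction to a suitable finite subset $\hat S$ of $\hGG$, exploiting that any left order in $LO_{\ZZ}(\hGG)$ is completely recoverable from the pair consisting of its image under $\pi_{\hGG}^*$ and the datum ``$z$ is positive''. For $<'\in LO_{\ZZ}(\hGG)$ and $\gg\in\GG$, write $f_{<'}(\gg)$ for the unique lift of $\gg$ with $1\le' f_{<'}(\gg)<'z$; then every $\hat g\in\hGG$ is uniquely $f_{<'}(\gg)z^{n}$ with $\gg$ its image in $\GG$ and $n\in\ZZ$, and $\hat g>'1$ precisely when $n\ge 1$, or when $n=0$ and $\gg\neq 1$ (while $\hat g=1$ iff $n=0$ and $\gg=1$). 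Two further facts about $\pi_{\hGG}^*$ will be used: $f_{<'}(\gg)^{-1}z=f_{<'}(\gg^{-1})$ for $\gg\neq 1$ (since $z$ is central), and the $<'$-order of $f_{<'}(\gg_1)$ and $f_{<'}(\gg_2)$, for distinct $\gg_1,\gg_2\neq 1$, is recorded by $\pi_{\hGG}^*(<')(1,\gg_1,\gg_2)$. Since $c$ is isolated and $\GG$ is finitely generated, I fix a finite $S\subseteq\GG$ with $U_{c,S}=\{c\}$, enlarged so that $1\in S=S^{-1}$ and $S$ generates $\GG$, and let $\hat S$ be the finite set consisting of $z^{\pm 1}$, the lifts $f_<(s)$ and $f_<(s)^{-1}z$ for $s\in S$, and the elements $f_<(s)^{-1}f_<(s')$ for $s,s'\in S$. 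Let $<'$ be any left order on $\hGG$ whose positive cone meets $\hat S$ in the same set as that of $<$; the goal is to prove $<' = <$.

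First I would check that $<'\in LO_{\ZZ}(\hGG)$. Positivity of $z$ is immediate since $z\in\hat S$. For $s\in S$ one has $1\le_< f_<(s)<_< z$, and since $f_<(s)$ and $f_<(s)^{-1}z=f_<(s^{-1})$ lie in $\hat S$, agreement of the cones forces $1\le' f_<(s)<'z$. As $S$ generates $\GG$, the group $\hGG$ is generated by $\{z\}\cup\{f_<(s):s\in S\}$, and since $z$ is central, writing any $\hat g\in\hGG$ as a product of $m$ such generators yields $z^{-m}\le'\hat g\le'z^{m}$; hence $z$ is cofinal for $<'$, so $c':=\pi_{\hGG}^*(<')\in CO(\GG)$ is defined. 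Next, from $1\le' f_<(s)<'z$ it follows that $f_{<'}(s)=f_<(s)$ for every $s\in S$, and the $<'$-order among $f_<(s_1),f_<(s_2),f_<(s_3)$ for distinct $s_i\in S$ is governed by the signs of the elements $f_<(s_i)^{-1}f_<(s_j)\in\hat S$, which coincide with their $<$-signs. Therefore $c'$ and $c$ agree on $S^3$, so $c'=c$ by isolation of $c$.

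It remains to deduce $<'=<$. Using $c'=c$ together with the two recalled facts, a short computation shows that $f_<$ and $f_{<'}$ induce the same $\ZZ$-valued $2$-cocycle on $\GG$, namely $(\gg_1,\gg_2)\mapsto e(\gg_1,\gg_2)$ defined by $f(\gg_1)f(\gg_2)=f(\gg_1\gg_2)z^{e(\gg_1,\gg_2)}$ (whose value in $\{0,1\}$ is determined by $c$ and the group operation). Hence the two sections differ by a homomorphism: $f_{<'}(\gg)=f_<(\gg)z^{\mu(\gg)}$ with $\mu\in\Hom(\GG,\ZZ)$. By the previous paragraph $\mu$ vanishes on the generating set $S$, so $\mu\equiv 0$ and $f_{<'}=f_<$. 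Combined with the criterion for ``$\hat g>'1$'' recalled in the first paragraph, $f_{<'}=f_<$ shows that $<'$ and $<$ have the same positive cone, i.e.\ $<'=<$. Thus $U_{<,\hat S}=\{<\}$ and $<$ is isolated.

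The delicate point is the middle step: the finite test set $\hat S$ must be chosen so as to simultaneously force the $<'$-fundamental lifts of the generators to equal the $<$-ones and to record enough pairwise comparisons among these lifts to determine $\pi_{\hGG}^*(<')$ on all of $S^3$ --- it is precisely here that isolation of $c$ enters, upgrading agreement on $S^3$ to the equality $c'=c$ on $\GG^3$. That global statement is in turn what makes $\mu$ a genuine homomorphism rather than an arbitrary function vanishing on $S$, so that finite generation of $\GG$ can be used to conclude $\mu\equiv 0$; without it, the finite (local) information recorded by $\hat S$ would not suffice. This is the argument of \cite[Proposition 5.4]{MR3887426}.
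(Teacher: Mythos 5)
The paper does not prove this statement at all---it is quoted verbatim from Mann--Rivas \cite[Proposition 5.4]{MR3887426}---so there is no in-paper proof to compare against; your argument is correct and is essentially a reconstruction of the Mann--Rivas proof. In particular you correctly handle the two genuinely delicate points: that the order cocycle $e$ of a left order in $LO_{\ZZ}(\hGG)$ is determined by its image under $\pi_{\hGG}^*$ (so isolation of $c$ pins down $e$ globally), and that the remaining ambiguity in the section $f_{<'}$ is a homomorphism $\mu \in \Hom(\GG,\ZZ)$, which is where finite generation of $\GG$ is used to force $\mu \equiv 0$ from the finitely many conditions encoded in $\hat S$.
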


Let $\hphi \colon \hGG \to \hGG$ be an automorphism such that $\hphi(\ZZ) = \ZZ$.
Then $\hphi$ descends to an automorphism $\phi \colon \GG \to \GG$ and forms the following commutative diagram:
\[
\xymatrix{
0 \ar[r] & \ZZ \ar[r] \ar[d] & \hGG \ar[r] \ar[d]^-{\hphi} & \GG \ar[r] \ar[d]^-{\phi} & 1 \\
0 \ar[r] & \ZZ \ar[r] & \hGG \ar[r] & \GG \ar[r] & 1.
}
\]


\begin{lemma}\label{lem:induced_order_compatible}
  The automorphisms $\hphi$ and $\phi$ induce the following commutative diagram
  \[
  \xymatrix{
  LO_{\ZZ}(\hGG) \ar[r]^-{\hphi^*} \ar[d]^-{\pi_{\hGG}^*} & LO_{\ZZ}(\hGG) \ar[d]^-{\pi_{\hGG}^*} \\
  CO(\GG) \ar[r]^-{\phi^*} & CO(\GG).
  }
  \]
  Here $\hphi^*$ (resp. $\phi^*$) is given by $\hphi^*(c) = c_{\hphi}$ (resp. $\phi^*(<) = <_{\phi}$).
\end{lemma}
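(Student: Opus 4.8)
The plan is to establish the identity $\pi_{\hGG}^*(<_{\hphi}) = \bigl(\pi_{\hGG}^*(<)\bigr)_{\phi}$ for every left order $<$ in $LO_{\ZZ}(\hGG)$, which is precisely the commutativity of the square, by unwinding the definition of $\pi_{\hGG}^*$ and tracking how the canonical lifts of elements of $\GG$ transform under $\hphi$. Before doing so I would record that the two horizontal maps are well defined: that $\phi^*$ sends $CO(\GG)$ to itself is immediate, and that $\hphi^*$ sends $LO_{\ZZ}(\hGG)$ to itself follows because $\hphi$ is an order isomorphism from $(\hGG, <_{\hphi})$ to $(\hGG, <)$ with $\hphi(\ZZ) = \ZZ$, so applying $\hphi^{-1}$ to a pair of elements of $\ZZ$ that sandwich $\hphi(\gg)$ shows that $\ZZ$ is cofinal for $<_{\hphi}$.

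The first substantive step is to pin down the distinguished generator. Let $z$ be the generator of $\ZZ \subset \hGG$ with $1 < z$. Since $\hphi$ carries generators of $\ZZ$ onto generators of $\ZZ$, the generator $w$ characterized by $1 <_{\hphi} w$ is exactly $w = \hphi^{-1}(z)$, so that $\hphi(w) = z$; concretely $w$ is $z$ or $z^{-1}$ according as the restriction $\hphi|_{\ZZ}$ is the identity or the inversion, but this case distinction will not be needed. The key auxiliary claim is then: for each $\gg \in \GG$, if $\hgg$ denotes the canonical lift of $\gg$ relative to $<_{\hphi}$, namely the unique lift with $1 \leq_{\hphi} \hgg \leq_{\hphi} w$, then $\hphi(\hgg)$ is the canonical lift of $\phi(\gg)$ relative to $<$. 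This follows from three observations: $\hphi(\hgg)$ projects to $\phi(\gg)$ because the projection $\hGG \to \GG$ intertwines $\hphi$ and $\phi$; applying the order isomorphism $\hphi$ to $1 \leq_{\hphi} \hgg \leq_{\hphi} w$ and using $\hphi(w) = z$ gives $1 \leq \hphi(\hgg) \leq z$; and the lift of $\phi(\gg)$ with this property is unique.

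The last step is to feed this into the sign-of-permutation formula defining $\pi_{\hGG}^*$. Given distinct $\gg_1, \gg_2, \gg_3 \in \GG$, let $\sigma$ be the permutation of $\{1, 2, 3\}$ with $1 \leq_{\hphi} \hgg_{\sigma(1)} <_{\hphi} \hgg_{\sigma(2)} <_{\hphi} \hgg_{\sigma(3)} <_{\hphi} w$, so that $\pi_{\hGG}^*(<_{\hphi})(\gg_1, \gg_2, \gg_3) = \mathrm{sign}(\sigma)$ by definition. Applying $\hphi$ to this chain of inequalities converts it, by the claim above, into $1 \leq \widehat{\phi(\gg_{\sigma(1)})} < \widehat{\phi(\gg_{\sigma(2)})} < \widehat{\phi(\gg_{\sigma(3)})} < z$, which is exactly the chain computing $\pi_{\hGG}^*(<)(\phi(\gg_1), \phi(\gg_2), \phi(\gg_3))$; hence that value is also $\mathrm{sign}(\sigma)$, and by the definition of the automorphic image it equals $\bigl(\pi_{\hGG}^*(<)\bigr)_{\phi}(\gg_1, \gg_2, \gg_3)$. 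When two of the $\gg_i$ coincide both sides are $0$, so the identity holds on all of $\GG^3$.

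The step I expect to be the main obstacle — indeed essentially the only delicate point — is the orientation bookkeeping: keeping straight that $\hphi$ is an order isomorphism between two genuinely different ordered groups, and that the positive generator of $\ZZ$ for $<_{\hphi}$ is $\hphi^{-1}(z)$ rather than $z$. Once that is settled, everything else is a direct substitution into the definition of $\pi_{\hGG}^*$ and presents no further difficulty.
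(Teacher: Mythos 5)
Your proposal is correct and follows essentially the same route as the paper: verify that $\ZZ$ remains cofinal for $<_{\hphi}$, identify the positive generator of $\ZZ$ for $<_{\hphi}$ as the $\hphi$-preimage of the one for $<$, show that $\hphi$ carries the canonical lift of $\gg$ (for $<_{\hphi}$) to the canonical lift of $\phi(\gg)$ (for $<$), and then compare the two sign-of-permutation computations. The only differences are notational (the paper names the $<_{\hphi}$-positive generator $z$ and the $<$-positive one $\hphi(z)$, whereas you use $z$ and $w=\hphi^{-1}(z)$ the other way around) plus your explicit remark about the degenerate case, neither of which changes the substance.
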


\begin{proof}
  First we prove that for $< \in LO_{\ZZ}(\hGG)$, the induced order $<_{\hphi}$ is also an element of $LO_{\ZZ}(\hGG)$.
  Let $\hgg$ be an element of $\hGG$.
  Since $\ZZ$ is cofinal for $<$, there exist $z^k$ and $z^l$ such that $z^k < \hphi(\hgg) < z^l$.
  By the definition of $<_{\hphi}$, we have $\hphi^{-1}(z^k) <_{\hphi} \hgg <_{\hphi} \hphi^{-1}(z^l)$.
  Since $\hphi^{-1}(z^k), \hphi^{-1}(z^k) \in \ZZ$, the subgroup $\ZZ$ is cofinal for $<_{\hphi}$.
  This implies that $\hphi \colon LO_{\ZZ}(\hGG) \to LO_{\ZZ}(\hGG)$ is well defined.

  Let $<$ be an element of $LO_{\ZZ}(\hGG)$.
  Let $z$ be the generator of $\ZZ$ satisfying $1 <_{\hphi} z$.
  Note that $\hphi(z)$ is the generator of $\ZZ$ satisfying $1 < \hphi(z)$.
  Let $\gg$ be an element of $\GG$ and $\hgg \in \hGG$ be the unique lift of $\gg$ satisfying $1 \leq_{\hphi} \hgg <_{\hphi} z$.
  Then the element $\hphi(\hgg)$ is the unique lift of $\phi(\gg)$ satisfying $1 = \hphi(1) \leq \hphi(\hgg) < \hphi(z)$.

  For distinct elements $\gg_1, \gg_2, \gg_3$ of $\GG$, let $\sigma$ be the permutation such that
  \begin{align}\label{align:sign_def}
    1 \leq_{\hphi} \hgg_{\sigma(1)} <_{\hphi} \hgg_{\sigma(2)} <_{\hphi} \hgg_{\sigma(3)} <_{\hphi} z.
  \end{align}
  This implies that
  \[
    \pi_{\hGG}^* \hphi^* (<)(\gg_1, \gg_2, \gg_3) = \pi_{\hGG}^*(<_{\hphi})(\gg_1, \gg_2, \gg_3) = \mathrm{sign}(\sigma).
  \]

  By \eqref{align:sign_def}, we obtain
  \[
    1 \leq \hphi(\hgg_{\sigma(1)}) < \hphi(\hgg_{\sigma(2)}) < \hphi(\hgg_{\sigma(3)}) < \hphi(z).
  \]
  This implies that
  \[
    \phi^* \pi_{\hGG}^*(<)(\gg_1, \gg_2, \gg_3) = \pi_{\hGG}^*(<)(\phi(\gg_1), \phi(\gg_2), \phi(\gg_3)) = \mathrm{sign}(\sigma).
  \]
  Hence we obtain $\pi_{\hGG}^* \hphi^* = \phi^* \pi_{\hGG}^*$.
\end{proof}

\subsection{Proof of Theorem \ref{thm:main_left}}
Recall that
\[
  \hG = \langle e_1, \ldots, e_k, h_1, \ldots, h_{2n}, z \mid e_1^{m_1} = \cdots = e_k^{m_k} = z, [h_1, z] = \cdots = [h_{2n}, z] = 1 \rangle.
\]
The subgroup $\langle z \rangle \cong \ZZ$ coincides with the center of $\hG$, and the quotient $\hG / \langle z \rangle$ is isomorphic to $G$.
Hence we obtain a central extension
\[
  0 \to \ZZ \to \hG \to G \to 1.
\]

\begin{lemma}\label{lem:cent_ext_determine}
  Let $a$ be an element of $\ZZ_{\geq 0}$ satisfying items i) and ii) in Lemma \ref{lem:transitive_I_1_covering} and set $d = m_1 \cdots m_k \cdot a + 1$.
  Let $\rho^{(d)} \colon G \to \Homeo_+^{(d)} \subset \Homeo_+(S^1)$ be the action defined in Subsection \ref{subsec:inf_many_iso_circ_ord}.
  Then the pullback $(\rho^{(d)})^* (\tHomeo_+(S^1))$ is isomorphic to $\hG$.
\end{lemma}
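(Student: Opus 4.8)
The plan is to describe the pullback $\hat G^{(d)} := (\rho^{(d)})^*\big(\tHomeo_+(S^1)\big)$ concretely as a fiber product and then to match it with the presentation of $\hG$ by an explicit morphism of central extensions. By definition,
\[
  \hat G^{(d)} = \big\{ (g, \tilde f) \in G \times \tHomeo_+(S^1) \mid \pi_\infty(\tilde f) = \rho^{(d)}(g) \big\},
\]
where $\pi_\infty \colon \tHomeo_+(S^1) \to \Homeo_+(S^1)$ is the projection with kernel $\langle T \rangle$. The homomorphism $\hat G^{(d)} \to G$, $(g,\tilde f) \mapsto g$, is surjective (as $\pi_\infty$ is), and its kernel $\{(1, T^n) \mid n \in \ZZ\}$ is infinite cyclic, generated by $\hat z := (1, T)$, and central, since every element of $\tHomeo_+(S^1)$ commutes with $T$. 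Thus $0 \to \ZZ \to \hat G^{(d)} \to G \to 1$ is a central $\ZZ$-extension, and it suffices to exhibit an isomorphism of this extension with the central extension $0 \to \ZZ \to \hG \to G \to 1$ coming from the presentation of $\hG$.

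The crucial point concerns the lifts of the elliptic generators. For each $l$, fix the lift $\tilde e_l \in \tHomeo_+(S^1)$ of $\rho^{(d)}(e_l)$ whose translation number equals $(m_l i_l + 1)/(d m_l)$, in accordance with the construction of $\rho^{(d)}$ in Subsection \ref{subsec:inf_many_iso_circ_ord}. Since $e_l^{m_l} = 1$ in $G$, we have $\rho^{(d)}(e_l)^{m_l} = \id$, hence $\tilde e_l^{m_l} \in \langle T \rangle$; by homogeneity of the translation number and since $d$ divides $m_l i_l + 1$ (item i) of Lemma \ref{lem:transitive_I_1_covering}), $\tilde e_l^{m_l} = T^{n_l}$ with $n_l := (m_l i_l + 1)/d \in \ZZ$. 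Now $d\, n_l = m_l i_l + 1 \equiv 1 \pmod{m_l}$, and since $d = m_1 \cdots m_k a + 1 \equiv 1 \pmod{m_l}$, this forces $n_l \equiv 1 \pmod{m_l}$. Hence $\hat e_l := T^{-(n_l - 1)/m_l}\,\tilde e_l$ is again a lift of $\rho^{(d)}(e_l)$ and satisfies $\hat e_l^{m_l} = T$. Fix moreover, for each $i$, an arbitrary lift $\hat h_i \in \tHomeo_+(S^1)$ of $\rho^{(d)}(h_i)$; this choice is irrelevant, as $\hat z$ is central.

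Finally, define $\Psi \colon \hG \to \hat G^{(d)}$ on generators by $z \mapsto \hat z$, $e_l \mapsto (e_l, \hat e_l)$ and $h_i \mapsto (h_i, \hat h_i)$. The relations of $\hG$ hold: $\Psi(e_l)^{m_l} = (1, \hat e_l^{m_l}) = (1, T) = \Psi(z)$, and $[\Psi(h_i), \Psi(z)] = 1$ since $\Psi(z)$ is central; so $\Psi$ is a well-defined homomorphism. Using the already-recorded fact that $\langle z \rangle \cong \ZZ$ equals the kernel of $\hG \to G$, the map $\Psi$ is a morphism from $0 \to \langle z \rangle \to \hG \to G \to 1$ to $0 \to \langle \hat z \rangle \to \hat G^{(d)} \to G \to 1$ inducing the identity on $G$ and the isomorphism $z^n \mapsto \hat z^n$ on kernels (injective, as $T$ has infinite order). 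By the short five lemma, $\Psi$ is an isomorphism. I expect the congruence $n_l \equiv 1 \pmod{m_l}$ — equivalently, that the class of the pullback extension in $H^2(G;\ZZ)$ coincides with that of $\hG$ — to be the only real content: it is precisely here that the arithmetic shape $d = m_1 \cdots m_k a + 1$ of the covering degree is used. The fiber-product description, the well-definedness of $\Psi$, and the five-lemma step are routine.
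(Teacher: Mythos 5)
Your proof is correct and follows essentially the same route as the paper: both arguments reduce the lemma to producing a lift of $\rho^{(d)}$ to $\tHomeo_+(S^1)$ sending $e_l$ to an element whose $m_l$-th power is exactly $T$, and both hinge on the arithmetic fact that $(m_l i_l+1)/d \equiv 1 \pmod{m_l}$ because $d \equiv 1 \pmod{m_l}$, the paper phrasing this as $\rot(\rho^{(d)}(e_l)) = 1/m_l$. Your write-up is somewhat more explicit about this congruence and about the fiber-product/short-five-lemma bookkeeping, which the paper leaves implicit, but there is no substantive difference.
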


\begin{proof}
  It suffices to construct a homomorphism $\trhod \colon \hG \to \tHomeo_+(S^1)$ such that the following diagram commutes:
  \begin{align*}
    \xymatrix{
    0 \ar[r] & \ZZ \ar[r] \ar@{=}[d] & \hG \ar[r] \ar[d]^-{\trhod} & G \ar[r] \ar[d]^-{\rho^{(d)}} & 1 \\
    0 \ar[r] & \ZZ \ar[r] & \tHomeo_+(S^1) \ar[r] & \Homeo_+(S^1) \ar[r] & 1.
    }
  \end{align*}

  By the definitions of $d$ and $\rho^{(d)}$, we have
  \[
    \rot(\rho^{(d)}(e_l)) = \dfrac{m_li_l+1}{dm_l} = \dfrac{1}{m_l}
  \]
  and $\rot(\rho^{(d)}(h_i)) = 0$.
  Let $\trhod(e_l) \in \tHomeo_+(S^1)$ (resp. $\trhod(h_i) \in \tHomeo_+(S^1)$) be the lift of $\rho^{(d)}(e_l)$ (resp. $\rho^{(d)}(h_i)$) with $\trot(\trhod(e_l)) = 1/m_l$ (resp. $\trot(\trhod(h_i)) = 0$), where $\trot$ denotes the Poincar\'{e} translation number.
  Let $\trho(z) = T$, the translation by one.
  Then this $\trhod$ makes the above diagram commute.
\end{proof}

\begin{proof}[Proof of Theorem \ref{thm:main_left}]
  Let $a$ be an element of $\ZZ_{\geq 0}$ satisfying items i) and ii) in Lemma \ref{lem:transitive_I_1_covering}.
  By Proposition \ref{prop:co_from_trho}, Theorem \ref{thm:circ_order_lift_Z_ext} and Lemma \ref{lem:cent_ext_determine}, we obtain a left order $<^{(d)} \in LO_{\ZZ}(\hG)$ such that $\pi_{\hG}^*(<^{(d)}) = c^{(d)}$.
  Together with Theorem \ref{thm:criterion_isol_left_order}, the left order $<^{(d)}$ is isolated in $LO(\hG)$.

  Assume that there exists an automorphism $\hphi \colon \hG \to \hG$ such that $<^{(d)}$ coincides with $(<^{(d')})_{\hphi}$.
  Since $\hphi$ preserves the center $\langle z \rangle \cong \ZZ$ of $\hG$, the automorphism $\hphi$ descends to an automorphism $\phi \colon G \to G$.
  By Lemma \ref{lem:induced_order_compatible}, we obtain
  \[
    c^{(d)} = \pi_{\hG}^*(<^{(d)}) = \pi_{\hG}^*((<^{(d')})_{\hphi}) = (c^{(d')})_{\phi}.
  \]
  By Theorem \ref{thm:main}, we have $d = d'$.
  This completes the proof.
\end{proof}

\section*{Acknowledgments}
C. Jibiki is supported by JST SPRING, Japan Grant Number JPMJSP2106.
S. Maruyama is partially supported by JSPS KAKENHI Grant Number JP23KJ1938 and JP23K12971.

\bibliographystyle{amsalpha}
\bibliography{202403ref.bib}

\providecommand{\bysame}{\leavevmode\hbox to3em{\hrulefill}\thinspace}
\providecommand{\MR}{\relax\ifhmode\unskip\space\fi MR }
\providecommand{\MRhref}[2]{%
  \href{http://www.ams.org/mathscinet-getitem?mr=#1}{#2}
}
\providecommand{\href}[2]{#2}
\begin{thebibliography}{MMRT19}

\bibitem[ABR17]{MR3664524}
Juan Alonso, Joaqu\'{\i}n Brum, and Crist\'{o}bal Rivas, \emph{Orderings and
  flexibility of some subgroups of {$Homeo_+(\Bbb R)$}}, J. Lond. Math. Soc.
  (2) \textbf{95} (2017), no.~3, 919--941. \MR{3664524}

\bibitem[BS18]{MR3813208}
Hyungryul Baik and Eric Samperton, \emph{Spaces of invariant circular orders of
  groups}, Groups Geom. Dyn. \textbf{12} (2018), no.~2, 721--763.

\bibitem[Cal04]{MR2172491}
Danny Calegari, \emph{Circular groups, planar groups, and the {E}uler class},
  Proceedings of the {C}asson {F}est, Geom. Topol. Monogr., vol.~7, Geom.
  Topol. Publ., Coventry, 2004, pp.~431--491.

\bibitem[CMR18]{MR3784820}
Adam Clay, Kathryn Mann, and Crist\'{o}bal Rivas, \emph{On the number of
  circular orders on a group}, J. Algebra \textbf{504} (2018), 336--363.
  \MR{3784820}

\bibitem[CR16]{MR3560661}
Adam Clay and Dale Rolfsen, \emph{Ordered groups and topology}, Graduate
  Studies in Mathematics, vol. 176, American Mathematical Society, Providence,
  RI, 2016. \MR{3560661}

\bibitem[DD01]{MR1859702}
T.~V. Dubrovina and N.~I. Dubrovin, \emph{On braid groups}, Mat. Sb.
  \textbf{192} (2001), no.~5, 53--64. \MR{1859702}

\bibitem[DDRW08]{MR2463428}
Patrick Dehornoy, Ivan Dynnikov, Dale Rolfsen, and Bert Wiest, \emph{Ordering
  braids}, Mathematical Surveys and Monographs, vol. 148, American Mathematical
  Society, Providence, RI, 2008.

\bibitem[Deh14]{MR3200370}
Patrick Dehornoy, \emph{Monoids of {$O$}-type, subword reversing, and ordered
  groups}, J. Group Theory \textbf{17} (2014), no.~3, 465--524. \MR{3200370}

\bibitem[DNR14]{deroin2014groups}
Bertrand Deroin, Andr{\'e}s Navas, and Crist{\'o}bal Rivas, \emph{Groups,
  orders, and dynamics}, arXiv preprint arXiv:1408.5805 (2014).

\bibitem[Ito13]{MR2998793}
Tetsuya Ito, \emph{Dehornoy-like left orderings and isolated left orderings},
  J. Algebra \textbf{374} (2013), 42--58. \MR{2998793}

\bibitem[Ito16]{MR3476136}
\bysame, \emph{Construction of isolated left orderings via partially central
  cyclic amalgamation}, Tohoku Math. J. (2) \textbf{68} (2016), no.~1, 49--71.
  \MR{3476136}

\bibitem[Mat20]{MR4055461}
Shigenori Matsumoto, \emph{Dynamics of isolated left orders}, J. Math. Soc.
  Japan \textbf{72} (2020), no.~1, 185--211. \MR{4055461}

\bibitem[MMRT19]{MR4033501}
Dominique Malicet, Kathryn Mann, Crist\'{o}bal Rivas, and Michele Triestino,
  \emph{Ping-pong configurations and circular orders on free groups}, Groups
  Geom. Dyn. \textbf{13} (2019), no.~4, 1195--1218. \MR{4033501}

\bibitem[MR18]{MR3887426}
Kathryn Mann and Crist\'{o}bal Rivas, \emph{Group orderings, dynamics, and
  rigidity}, Ann. Inst. Fourier (Grenoble) \textbf{68} (2018), no.~4,
  1399--1445. \MR{3887426}

\bibitem[MT23]{MR4584769}
Kathryn Mann and Michele Triestino, \emph{On the action of the {${\varSigma
  }(2,3,7)$} homology sphere group on its space of left-orders}, Fund. Math.
  \textbf{261} (2023), no.~3, 297--302. \MR{4584769}

\bibitem[Nav10]{MR2766228}
Andr\'{e}s Navas, \emph{On the dynamics of (left) orderable groups}, Ann. Inst.
  Fourier (Grenoble) \textbf{60} (2010), no.~5, 1685--1740. \MR{2766228}

\bibitem[Nav11]{MR2745552}
\bysame, \emph{A remarkable family of left-ordered groups: central extensions
  of {H}ecke groups}, J. Algebra \textbf{328} (2011), 31--42.

\bibitem[Riv12]{MR2859890}
Crist\'{o}bal Rivas, \emph{Left-orderings on free products of groups}, J.
  Algebra \textbf{350} (2012), 318--329. \MR{2859890}

\bibitem[RT16]{MR3460331}
Crist\'{o}bal Rivas and Romain Tessera, \emph{On the space of left-orderings of
  virtually solvable groups}, Groups Geom. Dyn. \textbf{10} (2016), no.~1,
  65--90.

\bibitem[Sik04]{MR2069015}
Adam~S. Sikora, \emph{Topology on the spaces of orderings of groups}, Bull.
  London Math. Soc. \textbf{36} (2004), no.~4, 519--526. \MR{2069015}

\end{thebibliography}
\end{document}